\newcommand{\doi}[1]{\textsc{doi}: \href{http://dx.doi.org/#1}{\nolinkurl{#1}}}
\numberwithin{equation}{subsection}
\newtheorem{theorem}{Theorem}[section]
\newtheorem{lemma}[theorem]{Lemma}
\newtheorem{proposition}[theorem]{Proposition}
\newtheorem*{theorem*}{Theorem}
\newtheorem*{prop*}{Proposition}
\theoremstyle{definition}
\newtheorem{remark}[theorem]{Remark}
\newtheorem{definition}[theorem]{Definition}
\DeclareMathOperator{\disc}{disc}
\DeclareMathOperator{\defo}{Def}
\DeclareMathOperator{\orth}{O}
\DeclareMathOperator{\sorth}{SO}
\DeclareMathOperator{\U}{U}
\DeclareMathOperator{\rank}{rank}
\def\shim{{\stack{Sh}}}
\def\triv{{\mathrm{triv}}}
\def\mk{{\stack R}}
\def\mcs{\stack{Cub}_2}
\def\mct{\stack{Cub}_3}
\def\lk{{L_{{\rm K3}}}}
\def\l2d{\ang{2d}}
\def\xx{{\mathbb X}}
\def\hh{{\mathbb H}}
\def\ball{{\mathbb B}}
\def\kk{{\mathbb K}}
\newcommand{\stack}[1]{{\sf #1}}
\newcommand{\tilstack}[1]{{\til{\stack #1}}}
\def\aff{{\mathbb A}}
\def\cx{{\mathbb C}}
\def\ff{{\mathbb F}}
\def\proj{{\mathbb P}}
\def\rat{{\mathbb Q}}
\def\real{{\mathbb R}}
\def\integ{{\mathbb Z}}
\def\gp{{\mathbb G}}
\def\mmu{\bm{\mu}}
\def\call{{\mathcal L}}
\def\calh{{\mathcal H}}
\def\calo{{\mathcal O}}
\def\calt{{\mathcal T}}
\DeclareMathOperator{\ts}{T}
\newcommand{\til}[1]{{\widetilde{#1}}}
\newcommand{\st}[1]{\{#1\}}
\newcommand{\ang}[1]{{{\langle #1 \rangle}}}
\newcommand{\rest}[1]{|_{#1}}
\newcommand{\invlim}[1]{\lim_{\stackrel{\leftarrow}{#1}}}
\newcommand{\floor}[1]{{\lfloor #1 \rfloor}}
\global\let\det\undefined
\DeclareMathOperator{\det}{det}
\global\let\hom\undefined
\DeclareMathOperator{\hom}{Hom}
\DeclareMathOperator{\id}{id}
\DeclareMathOperator{\aut}{Aut}
\DeclareMathOperator{\fil}{Fil}
\DeclareMathOperator{\gr}{Gr}
\DeclareMathOperator{\pic}{Pic}
\DeclareMathOperator{\ns}{NS}
\def\res{\bm{{\rm R}}}
\DeclareMathOperator{\SP}{Sp}
\newcommand{\ubar}[1]{{\underline{#1}}}
\def\sheafisom{{\stack{Isom}}}
\def\udot{^{\bullet}}
\def\ra{\rightarrow}
\def\tensor{\otimes}
\def\iso{\cong}
\def\cross{\times}
\def\inject{\hookrightarrow}
\def\bs{\backslash}
\def\sriso{\stackrel{\sim}{\rightarrow}}
\def\derham{{\rm dR}}
\def\cris{{\rm cris}}
\DeclareMathOperator{\spec}{Spec}
\DeclareMathOperator{\spf}{Spf}
\def\dual{^\vee}
\def\inv{^{-1}}
\def\twiddle{\sim}
\newcommand{\half}[1]{\frac{#1}{2}}
\newcommand{\powser}[1]{[\![#1]\!]}
\newenvironment{alphabetize}{\begin{enumerate}

}{\end{enumerate}}
\begin{document}

\title{Arithmetic occult period maps}

\author{Jeffrey D. Achter}
\email{j.achter@colostate.edu}
\address{Department of Mathematics, Colorado State University, Fort
Collins, CO 80523}
\urladdr{https://www.math.colostate.edu/~achter}

\subjclass[2010]{Primary 14J10; Secondary 11G18, 14D05}

\begin{abstract}

  Several natural complex configuration spaces admit surprising
  uniformizations as arithmetic ball quotients, by identifying each
  parametrized object with the periods of some auxiliary object.  In
  each case, the theory of canonical models of Shimura varieties gives
  the ball quotient the structure of a variety over the ring of
  integers of a cyclotomic field.  We show that the
  (transcendentally-defined) period map actually respects these
  algebraic structures, and thus that occult period maps are
  arithmetic.  As an intermediate tool, we develop an arithmetic
  theory of lattice-polarized K3 surfaces.
\end{abstract}

\maketitle

\section{Introduction}

It occasionally happens that  complex varieties of a
specified type are
parametrized by an arithmetic quotient of a unit ball in a surprising
way.  We situate this remark by recalling some aspects of
the primordial period map. Consider $\stack M_g$, the moduli space of
smooth projective curves of genus $g\ge 2$.  Given a smooth projective
curve of genus $g$, the possibilities for its period lattice
are naturally parametrized by the quotient of $\hh_g$, the Siegel
upper-half space of dimension $g(g+1)/2$, by $\SP_{2g}(\integ)$.  The
classical Torelli theorem asserts that the corresponding map $\tau_{g,\cx}: \stack M_g(\cx) \to \hh_g/\SP_{2g}(\integ)$ is an inclusion.  Even more is
true.  On one hand, $\stack M_g$ has a natural structure as a moduli
space over $\integ$.  On the other hand, let $\stack A_g$ be the moduli
space of principally polarized abelian varieties of dimension $g$; it,
too, is a space
over $\integ$.  Via the identification $\stack A_g(\cx) \iso
\hh_g/\SP_{2g}(\integ)$, we endow the latter with a structure over
$\integ$, as well.  The key arithmetic fact about the Torelli map is
that $\tau_{g,\cx}$, \emph{a priori} a transcendental map, descends to
a morphism $\tau_{g}: \stack M_g \inject \stack A_g$ over $\integ$
(e.g., \cite[\S 7.4]{mumfordGIT}).
Still, as soon as $g > 3$, $\dim \stack M_g < \dim \stack A_g$.  This means that many
of the arithmetic structures on $\stack A_g$, such as Hecke operators
and modular forms, don't readily make sense for the moduli space of
curves.

In the special case where $g=4$, however, we have the intriguing observation of
Kond\=o \cite{kondog4} that $\stack M_4(\cx)$ is very close to an
arithmetic quotient of $\ball^{9}$, the complex unit 9-dimensional
ball.  Slightly more precisely, let $\stack N_4$ denote the (open,
dense) locus of non-hyperelliptic curves.  Kond\=o shows that there
exist an arithmetic group $\Gamma$ of automorphisms of $\ball^{9}$
and an open immersion $\stack N_4(\cx) \inject \ball^{9}/\Gamma$.  (He
even characterizes the image.)  Instead of analyzing the periods of a
non-hyperelliptic curve $C$, the construction of \cite{kondog4}
proceeds by constructing an auxiliary variety $Z$ associated to $C$,
and analyzing \emph{its} periods.  Kudla and Rapoport \cite{kroccult}
-- who call such a period map \emph{occult}, in recognition of its hidden
nature -- observe that the theory of canonical models of Shimura
varieties produces a distinguished algebraic model of
$\ball^{9}/\Gamma$ over $\rat(\zeta_3)$.  They prove that Kond\=o's
occult period map actually respects the structures of $\stack N_4$ and
$\ball^{9}/\Gamma$ as varieties over $\rat(\zeta_3)$, and conjecture
that it extends to a map of integral canonical models over
$\integ[\zeta_3,1/3]$.  (They also note certain stack-theoretic
issues, which have since been resolved by Zheng \cite{zhengoccult};
see Remark \ref{rem:zheng} below.)

In fact, several different situations are known in which, for some
moduli space $\stack V$ of low-complexity varieties, an occult
period map yields an open immersion $\tau_{\stack V,\cx}:\stack V(\cx) \inject \ball^{\dim
  V}/\Gamma_{\stack V}$; see, for instance, \cite{dolgachevkondo} and
\cite{kroccult}, or even \S \ref{sec:examples} below, for examples.  In each case known
to the author, the theory of integral canonical models of Shimura
varieties provides a distinguished model of $\shim_{\Gamma_{\stack V}}(\ball^{\dim
  \stack V})$ of $\ball^{\dim \stack V}/{\Gamma_{\stack V}}$
over $\integ[\zeta_n,1/n]$ for some $n = n(\stack V)$.  The goal of the
present paper is to show that, in many cases, $\tau_{\stack V,\cx}$ descends
to a morphism $\stack V \inject \stack \shim_{\Gamma_{\stack V}}(\ball^{\dim \stack V})$ over
$\integ[\zeta_n,1/2n]$.

Many of the original constructions involve somehow building a K3
surface out of the original variety, and then analyzing the periods of
the corresponding K3 surface.  Consequently, much of the work of the
present paper is in analyzing moduli spaces $\stack R_{L,\ubar\chi}$ of
K3 surfaces polarized by the lattice $L$ and equipped with a suitable
action of $\mmu_n$.  A representative result -- the notation is
defined later in the paper -- is:

\begin{prop*}
There are morphisms of stacks over $\integ[\zeta_3,1/6]$:
\[
\xymatrix{
\mk^\circ_{L_4,\ubar\chi_4} \ar[r]^{\kappa_4} \ar[d]^{\tau_{L_4,\ubar\chi_4}} & \stack N_4 \\
\stack \shim^{(L_4,\ubar\chi_4)}&
}
\]
where $\kappa_4$ induces an isomorphism of coarse moduli spaces, and $\tau_{L_4,\ubar\chi_4}$ induces an open immersion $\mk^\circ_{L_4,\ubar\chi_4}(\cx) \inject \shim^{(L_4,\ubar\chi_4)}(\cx)$.
\end{prop*}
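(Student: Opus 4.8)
The plan is to factor the statement into three independent pieces: (i) constructing the algebraic model $\mk^\circ_{L_4,\ubar\chi_4}$ over $\integ[\zeta_3,1/6]$ together with the period morphism $\tau_{L_4,\ubar\chi_4}$ landing in the integral canonical model $\shim^{(L_4,\ubar\chi_4)}$; (ii) constructing the Kondō morphism $\kappa_4$ to $\stack N_4$ and checking it descends to $\integ[\zeta_3,1/6]$; and (iii) verifying the complex-analytic assertions, namely that $\kappa_4$ is an isomorphism on coarse spaces and $\tau_{L_4,\ubar\chi_4}(\cx)$ is an open immersion. Pieces (i) and (ii) are where the arithmetic content lies; piece (iii) I would simply import from Kondō's original work together with the comparison between the algebraic and transcendental descriptions of the complex fiber.

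For (i), I would invoke the arithmetic theory of $\mmu_n$-polarized K3 surfaces developed earlier in the paper: $\mk_{L_4,\ubar\chi_4}$ is a smooth Deligne–Mumford stack over $\integ[\zeta_3,1/6]$ parametrizing K3 surfaces with a lattice polarization by $L_4$ and a compatible $\mmu_3$-action of character type $\ubar\chi_4$; one passes to the open substack $\mk^\circ_{L_4,\ubar\chi_4}$ where the polarization is primitive and the relevant period point avoids the discriminant. The period morphism is then built from the relative second $\ell$-adic (and de Rham) cohomology: the $\mmu_3$-action splits $H^2$ into eigenspaces, and the $\chi_4$-eigenpiece carries a polarized variation of Hodge structure of the type classified by $\shim^{(L_4,\ubar\chi_4)}$. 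Because $\shim^{(L_4,\ubar\chi_4)}$ is, by construction, the integral canonical model of an orthogonal/unitary Shimura variety in the sense of Kisin and Madapusi Pera, the universal property of integral canonical models (applied after checking the motive is of K3 type and the Kuga–Satake construction extends integrally away from $2$ and $3$) furnishes a unique extension of the transcendental period map to a morphism $\tau_{L_4,\ubar\chi_4}$ over $\integ[\zeta_3,1/6]$. Here the inversion of $2$ is exactly what is needed to make the Kuga–Satake abelian variety and its polarization behave well, which is why the base ring is $\integ[\zeta_3,1/6]$ rather than $\integ[\zeta_3,1/3]$.

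For (ii), Kondō's construction attaches to a non-hyperelliptic genus-$4$ curve $C$ — equivalently, a canonical curve cut out on a quadric in $\proj^3$ by a cubic surface — the K3 surface obtained as a triple cyclic cover of that quadric branched along $C$; this carries a natural $\mmu_3$-action and an $L_4$-polarization, giving a point of $\mk^\circ_{L_4,\ubar\chi_4}$. The content of $\kappa_4$ is the reverse direction, recovering $C$ from the branch locus of the cover, and the main thing to check is that both directions are defined by algebraic operations on families over arbitrary $\integ[\zeta_3,1/6]$-schemes (cyclic covers, quotients by $\mmu_3$, and the Stein factorization picking out the branch curve all work in this generality once $3$ is invertible, by standard results on cyclic covers away from the order of the group). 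One must also track the distinction between the rank-$1$ versus rank-$2$ case of the quadric (smooth quadric versus quadric cone), but this is an open-dense phenomenon on both sides and does not affect the claim about coarse moduli spaces.

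The main obstacle I expect is (i): proving that the transcendental period map extends to an \emph{integral} canonical model, not merely to the generic fiber over $\rat(\zeta_3)$. Over the generic fiber one has Deligne's theory of absolute Hodge cycles or, more concretely, the fact that the period map is a map of Shimura-type varieties over a number field, so descent is formal once one knows the Hodge structure is defined over $\rat(\zeta_3)$. The integral statement requires genuinely more: one needs the K3 surfaces in the family to have potentially good reduction controlled by the lattice data, the Kuga–Satake abelian scheme to extend over $\integ[\zeta_3,1/6]$, and the extension to be compatible with the smooth integral model of the Shimura variety — this is where the earlier sections' arithmetic theory of lattice-polarized K3 surfaces does the heavy lifting, via Madapusi Pera's integral comparison and the Néron–Ogg–Shafarevich-type criterion for K3s. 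Once the extension exists on the level of $\mk_{L_4,\ubar\chi_4}$, restricting to the open locus $\mk^\circ$ and combining with (ii) and (iii) assembles the diagram; the coarse-space isomorphism for $\kappa_4$ and the open immersion for $\tau_{L_4,\ubar\chi_4}(\cx)$ then follow by base change to $\cx$ from Kondō's theorem together with the identification of $\shim^{(L_4,\ubar\chi_4)}(\cx)$ with the arithmetic ball quotient $\ball^9/\Gamma$.
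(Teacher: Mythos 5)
Your overall architecture (build $\kappa_4$, build $\tau_{L_4,\ubar\chi_4}$, import the complex statements from Kond\=o) matches the paper's, but there is a genuine gap at the point you dismiss as formal: the descent of the period map to the reflex field $\rat(\zeta_3)$. You write that ``descent is formal once one knows the Hodge structure is defined over $\rat(\zeta_3)$,'' but that is precisely the assertion to be proved --- one must show the transcendentally defined map $\mk^\circ_{L_4,\ubar\chi_4,\cx}\to\shim^{(L_4,\ubar\chi_4)}_\cx$ is $\aut(\cx/\rat(\zeta_3))$-equivariant, and nothing in your outline supplies a mechanism for this. The paper's mechanism is a bootstrap: Rizov's theorem (proved via CM theory for K3 surfaces) gives descent of $\tau_{2d}$ to $\rat$ for polarized K3 surfaces; one then adds level structure to work with honest quasiprojective schemes, composes with the forgetful maps induced by $\ang{2d}\inject L_4\inject\lk$, and uses the fact that the induced morphisms of Shimura varieties are universally injective to transfer Galois-equivariance from $\tau_{2d}$ to $\tau_{L_4,\ubar\chi_4}$. (The alternative route, Kudla--Rapoport's monodromy argument, is likewise a substantive input.) Without one of these, your step (i) does not close. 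Conversely, you locate the difficulty in the integral extension, but that step is the comparatively soft one: once the map exists over $\rat(\zeta_3)$, the extension property of the integral canonical model applies directly because the source $\mk^\circ_{L_4,\ubar\chi_4}$ is regular and formally smooth over $\integ[\zeta_3,1/6]$ --- the real work there is the deformation-theoretic computation establishing that smoothness, not any integral Kuga--Satake or N\'eron--Ogg--Shafarevich consideration, none of which the paper invokes at this stage.

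Two secondary points. First, for $\kappa_4$ you assert that ``both directions are defined by algebraic operations on families''; the inverse direction cannot be a morphism of stacks inverting $\kappa_4$, since $\kappa_4$ factors through the rigidification along $\mmu_3$ and is only an isomorphism on coarse moduli spaces (this is exactly the Kudla--Rapoport/Zheng subtlety). The paper needs only a set-theoretic section on geometric points, plus Zariski's main theorem. Second, your plan does not address why the fixed locus $Z^{\mmu_3}$ is fiberwise a smooth nonhyperelliptic genus-$4$ curve in positive characteristic; the paper handles this by lifting to characteristic zero (using smoothness of the moduli stack) and specializing the genus-$4$ component of the fixed locus, with nonhyperellipticity read off from the curve lying on a quadric in $\proj^3$.
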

The statement over $\cx$ is, essentially, \cite[Thm.~1]{kondog4}; taking fibers over $\rat(\zeta_3)$
recovers the descent result \cite[Thm. 8.1]{kroccult}.

\subsection*{Acknowledgments}
My interest in this circle of ideas was sparked when
S.~Casalaina-Martin told me about the remarkable identification of the
moduli of complex cubic surfaces as a ball quotient
\cite{actsurfaces,dvgk}.  It's a pleasure to thank him for continued
inspiration and insights.  I thank M.~Rapoport, K.~Madapusi Pera
and the referee for helpful comments, and I.~Dolgachev for suggesting
that this paper should address the quasi-polarized situation.  This work was partly supported
by grants from the Simons Foundation (637075) and the National
Security Agency (H98230-14-1-0161, (15/16)-1-0247).

\section{Notation on lattices}
\label{sec:lattice}

\subsection{Lattices}
Throughout this paper, a {\em lattice} is a free $\integ$-module $L$ of
finite rank, equipped with a nondegenerate, symmetric bilinear
pairing $(\cdot,\cdot)$ (often notationally suppressed).  For any
nonzero $n$, we let  $L(n)$ denote the lattice
with the same underlying group structure as $L$ and with pairing
$(\cdot,\cdot)_{L(n)} = n (\cdot,\cdot)_L$.
We follow the conventions of \cite{huybrechtsk3} for lattices.  Lattices used here include:
\begin{itemize}
\item[$U$] the hyperbolic plane, which has rank
$2$ and pairing $\left(\begin{smallmatrix} 0&1\\1&0\end{smallmatrix}\right)
  $;
\item[$\ang 1$] the lattice of rank $1$ and pairing $(1)$;
\item[$E_8$] the unique positive definite unimodular lattice of rank
  $8$;
\item[$A_n$, $D_n$] the (positive) lattice associated to the Dynkin
  diagrams of type $A_n$ and $D_n$, respectively (in particular, $A_1
  \iso \ang 2$);
\item[$\lk$] the lattice $U^{\oplus 3}\oplus E_8(-1)^{\oplus 2}$, of signature $(3^+,19^-)$;
\item[$V$] the lattice of rank $2$ and pairing $\begin{pmatrix} 2&1\\1&-2 \end{pmatrix}$.
\end{itemize}

The pairing induces an inclusion $L\inject L\dual$; the discriminant
of $L$ is the finite abelian group $\disc(L) =
L\dual/L$, and we set $\Delta_L = [L\dual:L] = \#\disc(L)$.  Finally,
let
\[
d(L) = \gcd(\st{d \in {\mathbb N}: \exists \ang{2d}\inject L\text{
    primitive}}).
\]
For use in \S \ref{sec:examples}, we record the following elementary
facts:

\begin{lemma}
\begin{alphabetize}
\item If $M$ is a primitive sublattice of $L$, then $d(L)|d(M)$.
\item $d(U(n)) = n$, while $d(A_1 \oplus A_1(-1)^{\oplus 2}) =
  d(V\oplus A_4(-1)) = 1$.
\end{alphabetize}
\end{lemma}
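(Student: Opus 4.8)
The plan is to handle the two parts independently; part (a) rests on a single primitivity observation, and part (b) on writing down explicit vectors.

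For (a): recall that giving a primitive embedding $\ang{2d}\inject M$ is the same as giving a vector $w\in M$ that is primitive in $M$ with $(w,w)=2d$. First I would check that such a $w$ is automatically primitive in $L$: if $w=ku$ with $u\in L$ and $k>1$, then the class of $u$ in $L/M$ is $k$-torsion, and since $M$ is primitive the quotient $L/M$ is torsion-free, forcing $u\in M$ — contradicting primitivity of $w$ in $M$. Hence $w$ realizes a primitive embedding $\ang{2d}\inject L$, so the set of $d$'s defining $d(M)$ is contained in the set defining $d(L)$. Since $d(L)$ divides every element of the larger set, it divides every element of the smaller one, hence divides their gcd $d(M)$.

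For (b), I would compute directly. Take the standard basis $e,f$ of $U(n)$, with $(e,e)=(f,f)=0$ and $(e,f)=n$; a vector $ae+bf$ has norm $2abn$ and is primitive precisely when $\gcd(a,b)=1$. So the norms $2d$ of primitive vectors are exactly the integers $2abn$ with $\gcd(a,b)=1$ and $ab\ge 1$; the value $d=n$ is attained ($a=b=1$) and every such $d$ is divisible by $n$, whence $d(U(n))=n$. For the other two lattices it is enough to produce one primitive vector of norm $2$, because then the defining set contains $1$ and $d(L)=1$. In $A_1\oplus A_1(-1)^{\oplus 2}\iso\ang 2\oplus\ang{-2}^{\oplus 2}$ the generator of the $\ang 2$ summand works — it is a basis vector of a direct summand, hence primitive. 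Likewise, in the given basis of $V$ the first basis vector has norm $2$ and is primitive in $V$, hence in $V\oplus A_4(-1)$ since $V$ is a direct summand.

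I do not expect a real obstacle. The only point needing a word of care is the lower bound $d(U(n))\ge n$, i.e., that $U(n)$ has no primitive vector of norm strictly between $0$ and $2n$; this is immediate since every norm in $U(n)$ lies in $2n\integ$. One should also remember the convention that only $d\in{\mathbb N}$ contributes, so odd or non-positive self-intersections are simply ignored in forming the defining sets.
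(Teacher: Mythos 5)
Your argument is correct and complete; the paper itself records this lemma as an ``elementary fact'' with no written proof, and your verification (primitivity of a vector in $M$ passes to $L$ because $L/M$ is torsion-free, plus the explicit norm computations $2abn$ in $U(n)$ and the norm-$2$ basis vectors in the other two lattices) is exactly the intended argument.
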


\subsection{Orthogonal  groups}

To $L$ we associate the orthogonal group  $\orth_L$, with
connected component of identity the special orthogonal group
$\sorth_L$.  Since we start with an integral model for $\sorth$ as the
automorphism group of the lattice $L$, we have a natural definition of
$\sorth_L(R)$ for any ring $R$.  In particular, $\sorth_L(\integ_p)$
is well-defined and, by definition, a hyperspecial subgroup of
$\sorth_L(\rat_p)$.  (In Section \ref{SS:shimintegral}, a choice of
hyperspecial subgroup is necesary for the construction of a canonical
integral model of the relevant Shimura variety.  In fact, since
$\sorth_L$ is adjoint and split, $\sorth_L(\rat_p)$ admits a unique
$\sorth_L(\rat_p)$-conjugacy class of hyperspecial subgroups.)

We will often have cause to work with a lattice $L$ which comes
equipped with a primitive embedding $\iota:L \inject \lk$.  With a
slight abuse of notation, we will write $L^\perp$ for the orthogonal
complement $\iota(L)^\perp$ of $\iota(L)$ in $\lk$.  Set $\orth^L = \orth_{L^\perp}$ and $\sorth^L =
\sorth_{L^\perp}$.
An element of
$\orth^L(\integ)$ extends to an element of $\orth_\lk(\integ)$ acting
trivially on $L$ if and only if it
acts trivially on $\disc(L)$ (e.g., \cite[14.2.6]{huybrechtsk3}).
More generally, if $R$ is flat over $\integ$, then an element of
$\orth^L(R)$ extends to an element of $\orth_L(R)$ acting trivially on
$L\tensor R$ if and only if it
acts trivially on $\disc(L)\tensor R$ \cite[Lemma 2.6]{peraspin}.  The subgroup of
admissible orthogonal automorphisms of $L^\perp$ is the group scheme
$\til \orth ^L$ fitting in the short exact sequence
\[
\xymatrix{
1 \ar[r] & \til \orth^L \ar[r] & \orth^L \ar[r] & \aut(\disc(L))
\ar[r] & 1;
}
\]
on points, we have
\begin{align*}
\til\orth^L(R) &= \st{ g\rest{L^\perp\tensor R} : g \in \orth_\lk(R)}\\
&= \ker \left(\orth^L(R) \ra \aut(\disc(L))(R) \right).
\end{align*}
If $\til g \in \til\orth^L(R)$, then there is a (necessarily unique) $g
\in \orth_\lk(R)$ such that $g \rest{L^\perp} = \til g$ and $g
\rest{L} = \id_L$.  In this way, $\til \orth^L(R)$ is naturally
identified with a subgroup of $\orth_\lk(R)$.
The group scheme $\til\sorth^L := \til \orth^L \cross_{\orth^L}
\sorth^L$ represents admissible automorphisms of determinant one.

\section{Families of K3 surfaces}

\subsection{K3 surfaces}
\label{subsec:k3def}

Let $k$ be an algebraically closed field.  A K3 surface over $k$ is a
smooth, complete irreducible surface $Z/k$ with trivial canonical
bundle $\omega_Z := \Omega^2_{Z/k} \iso \calo_Z$ and such that
$H^1(Z,\calo_Z) = 0$.  Like any smooth complete surface, a K3 surface
is projective, and its middle Hodge numbers are $(h^{2,0},h^{1,1},
h^{0,2}) = (1,20,1)$.
If $k = \cx$, then the Betti cohomology group $H^2(Z,\integ)$, endowed
with the intersection pairing $(\cdot.\cdot)$, is isomorphic to $\lk$.
The natural pairing
\[\xymatrix{
 \Omega_Z \cross \Omega_Z \ar[r] & \omega_Z
}
\]
induces an isomorphism of $\calt_Z \iso \Omega_Z$.  Since
$H^1(Z,\calo_Z)$ is trivial so is $\pic^0(Z)$, and the N\'eron-Severi
group $\ns(Z)$ coincides with the Picard group $\pic(Z)$.  As for any
surface, $\ns(Z)$ is a free, finitely generated $\integ$-module,
equipped with a symmetric, nondegenerate pairing
\[
\xymatrix{
\ns(Z) \cross \ns(Z) \ar[r]^-{(\cdot.\cdot)} & \integ.
}
\]
The
intersection pairing  is even, nondegenerate, and of signature
$(1,\rank(\ns(Z))-1)$.

Following Rizov \cite{rizovmoduli} and successors, we say that a a
relative K3 surface, or K3 space, over a scheme $S$ is an algebraic
space $Z \ra S$ such that each geometric fiber is a K3 surface.
If
$Z \ra S$ is a relative K3 surface, then $\calh^2_\derham(Z/S)$ and
$\calh^{2,0}(Z/S)$ are locally free sheaves on $S$ \cite[Prop.\
2.2]{delignelift}, \cite[\S 3.4]{perak3} of respective ranks $22$ and $1$.

\subsection{Categories of K3 surfaces}
\label{subseccatk3}
We will study three different sorts of moduli spaces of K3 surfaces.

Classically, one has $\mk_{2d}^\circ$, the category of K3 surfaces
equipped with a primitive ample polarization of degree $2d$.  On points,
$\mk_{2d}^\circ(S)$ is the category of pairs $(Z \ra S, \lambda)$, where
$Z \ra S$ is a K3 space, and $\lambda \in \pic_{Z/S}(S)$ is
\'etale-locally represented by an ample line bundle of
self-intersection degree $2d$ which is not a nontrivial tensor power
of any other line bundle. This is a subcategory of $\mk_{2d}$, the
category of K3 surfaces equipped with a primitive quasi-ample
polarization of degree $2d$.  (A quasi-ample, or pseudo-ample,
polarization is \'etale-locally a line bundle which is big and nef.)

Choose a generator $e_0$ for $\ang{2d}$.  To specify data $(Z \to S,
\lambda)$ is to specify a K3 space $Z \to S$ and an embedding of
lattices $\ang{2d} \inject \pic_{Z/S}(S)$ which takes $e_0$ to the
class of an ample line bundle.  (Recall that if $Z/k$ is a K3 surface
over an algebraically closed field, and if $v \in \pic_{Z/k}(k)$
satisfies $(v,v)>0$, then exactly one of $v$ and $-v$ and represents
the class of an ample line bundle.  The choice of a
generator for $\ang{2d}$ is equivalent to the choice of a ``positive
cone'' in $\ang{2d}\tensor \real \iso \real$.)

More generally, we consider lattice-polarized K3 surfaces.
Let $L$ be a primitive sublattice of $\lk$ of signature $(1,r-1)$.  The set
\[
\st{v\in L\tensor \real: (v,v) >0}
\]
has two connected components. Choose one such, $V^+$, and, as in \cite[\S 1]{dolgachev_lattice} or \cite[\S 10]{dolgachevkondo}, define an abstract "ample cone" $\mathcal C(V^+)$, an open subset of $V^+$, and let $L^+ = L\cap \mathcal C(V)^+$.
We suppress the choice of $V^+$ (and, thus, $L^+$) from the notation,
and let $\mk_L^\circ$ be the category of ample $L$-polarized K3 surfaces.
Objects in $\mk_L^\circ$ are isomorphism classes of pairs
$(Z \ra S, \alpha)$, where $Z \ra S$ is a K3 space and
$\alpha: L \inject \pic_{Z/S}(S)$ is a primitive embedding of
lattices such that $\alpha(L^+)$ contains the class of an ample line
bundle.  (Since $\alpha$ is a primitive embedding, it is equivalent to
ask that $\alpha(\mathcal C(V^+))$ contains such a class.)
  Here, we declare
that two such data $(Z_i \ra S, \alpha_i)$ are isomorphic if there is
an isomorphism $f:Z_1 \ra Z_2$ such that
$f^*\alpha_2 = \alpha_1$.  We define $\mk_L$, the category of
$L$-polarized K3 surfaces, analogously except that it is only assumed
that $\alpha(L^+)$ contains the class of a quasi-ample line bundle.

(Of course, one can also make the
definition of an $L$-polarized K3 surface without keeping track of a
positive cone, provided one is willing to identify $\alpha$ and
$-\alpha$.  For a Shimura-theoretic justification for this approach,
see Remark \ref{rem:taelmantwist} and, ultimately, \cite[\S
5]{taelman17}. Consequently, the choice of $L^+$ is suppressed here.)

For a finite group scheme $G$, let $\mk_{L,G}^*$ be the category of tuples
$(Z \ra S, \alpha, \rho)$ where $(Z\ra S, \alpha) \in \mk_L(S)$ and
$\rho: G_S \inject \aut_S(Z\ra S,\alpha)$ is a monomorphism of group
schemes.  If $\#G$ is invertible on $S$ -- equivalently, if the
cardinality of $G$ is relatively prime to the characteristic exponent
of all residue fields of points of $S$ -- then representations of $G$
on $\calo_S$-modules are rigid, and thus $\calh^2_\derham(Z/S)$ and
$\calh^{2,0}(Z/S)$ are locally free sheaves of
$\calo_S[G]$-modules.

We now specialize to the case $G = \mmu_n$, and restrict to the
category of schemes over $\integ[1/2\Delta_Ln]$.  Let $\chi^\omega$ be a faithful character of $\mmu_n$, $\chi_0$ the
trivial character, and $\chi$
be an arbitrary character; let $m(\chi^\omega) = m_\chi(\chi^\omega)$
and $m(\chi_0) = m_\chi(\chi_0)$ be the multiplicities of,
respectively, $\chi_0$ and $\chi^\omega$ in $\chi$.
Let $\mk_{L,\mmu_n,\chi^\omega, \chi}$
be the open and closed substack of $\mk_{L,\mmu_n}^*$ parametrizing
those $(Z \ra S, \alpha, \rho)$ such that
\begin{itemize}
\item $\mmu_n$ acts on $\calh^{2,0}(Z/S)$ via $\chi^\omega$;
\item $\mmu_n$ acts on $\calh^2(Z/S)$ via $\chi$; and
\item $m_\chi(\chi_0) = \rank(L)$.
\end{itemize}
In particular, the action of $\mmu_n$ is purely non-symplectic, in the
sense that no nontrivial section of $\mmu_n$ fixes a nonzero
holomorphic 2-form.

Suppose $S$ is irreducible and $\bar s$ is a geometric point of $S$.
Because $2n$ is invertible on $S$, representations of $\mmu_n$ on
$\calo_S$-modules are rigid.  In particular,
 the character of the action of
$\mmu_n$ on $\calh^2(Z/S)$ is determined by the action on
$\calh^2_\derham(Z_{\bar s})$.  Moreover, it is equivalent to specify
this character in terms of the action of $\mmu_n$ on
$H^2_\cris(Z_{\bar s})$, or any of the \'etale cohomology groups
$H^2(Z_{\bar s},\rat_\ell)$ \cite[Thm~2.2]{katzmessing}, or (since K3
surfaces have torsion-free cohomology) $H^2(Z_{\bar s},\integ_\ell)$.

We will often use the symbol $\ubar\chi$ to denote the collection of
data $(\mmu_n,\chi^\omega,\chi)$, and thus write $\mk_{L,\ubar\chi}$
for $\mk_{L,\mmu_n,\chi^\omega,\chi}$, etc.


The possibilities for data $(L,\ubar\chi)$ such that $\mk_{L,\ubar\chi}(\cx)$ is nonempty are reasonably well-understood.
On one hand, it is not hard to see that a purely
non-symplectic group of automorphisms is finite and cyclic; the
possible orders of such a group are also known
\cite{machidaoguiso98}.
On the other hand, starting with the work of Nikulin, one has a good
classification of primitive sublattices of $\lk$ \cite{nikulin79}.  In
\S \ref{sec:examples} we will see a number of explicit examples of
naturally occurring families of lattice-polarized K3 surfaces.

\subsection{Stacks of K3 surfaces}
\label{subsecstackk3}

It turns out that each $\mk_{2d}$, $\mk_L$, and $\mk_{L,\ubar\chi}$ is a Deligne-Mumford stack.  Indeed, Rizov proves
that $\mk_{2d}^\circ$ is a Deligne-Mumford stack \cite[Thm.\ 4.3.3]{rizovmoduli}, and Beauville essentially proves the same of $\mk_L$ in
\cite{beauvillefanok3}.  The partial compactification $\mk_{2d}$ of
$\mk_{2d}^\circ$ is also Deligne-Mumford and even smooth over
$\integ[1/2d]$ \cite[Prop.~2.1]{maulik_k314},
albeit no longer separated (e.g., \cite[5.1.4]{huybrechtsk3}).
Rather than working {\em ab ovo} to study $\mk_L$ and $\mk_{L,G}^*$,
we find it expedient to bootstrap
from Rizov's work.

It is convenient to make at the outset a few (arbitrary) choices; the
final claims are intrinsic, and independent of these choices.  Let
$e_1, \cdots, e_r$ be a $\integ$-basis for $L$.  Fix some $\lambda \in
L^{+}$.

\begin{lemma}
  \label{lemmkstack}
The category $\mk_L$ is a stack over $\spec \integ$.
\end{lemma}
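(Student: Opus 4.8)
The plan is to realize $\mk_L$ as a category fibered in groupoids over $\spec\integ$ by reducing its definition to that of the known stack $\mk_{2d}$, where $2d = (\lambda,\lambda)$ for the fixed $\lambda \in L^+$. Given $(Z \ra S, \alpha) \in \mk_L(S)$, the class $\alpha(\lambda) \in \pic_{Z/S}(S)$ is \'etale-locally a big and nef line bundle of self-intersection $2d$; after dividing by its divisibility in the geometric N\'eron--Severi lattice one obtains a primitive quasi-ample class, so the assignment $(Z \ra S, \alpha) \mapsto (Z \ra S, \alpha(\lambda))$ defines a morphism of fibered categories $\mk_L \ra \mk_{2d}$ (one must check this is independent of the \'etale cover used to trivialize, but this is routine descent since $\pic_{Z/S}$ is a sheaf). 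The first step is thus to verify that $\mk_L$ is a category fibered in groupoids at all: pullback of $(Z \ra S, \alpha)$ along $T \ra S$ is defined via the base change $Z_T \ra T$ together with $\alpha$ composed with the natural map $\pic_{Z/S}(S) \ra \pic_{Z_T/T}(T)$, and the groupoid condition follows because an isomorphism $f: Z_1 \ra Z_2$ over $S$ with $f^*\alpha_2 = \alpha_1$ is determined up to the (rigid) automorphisms of the underlying polarized K3, which form a finite group scheme.

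Next I would establish the two descent conditions. For the sheaf condition on morphisms: given $(Z_i \ra S, \alpha_i)$ and an \'etale (or fppf) cover $\{S_j \ra S\}$, an isomorphism over each $S_j$ respecting the $\alpha_i$, agreeing on overlaps, glues to an isomorphism over $S$ — this follows from the corresponding fact for $\mk_{2d}$ applied to $\alpha_i(\lambda)$ (which pins down $Z_i$ as a polarized K3 up to finite ambiguity) together with the observation that the gluing datum on the K3 surfaces is compatible with the full embeddings $\alpha_i$ because $\pic_{Z/S}$ is itself a sheaf, so the conditions $f_j^*\alpha_2 = \alpha_1$ on each $S_j$ glue to $f^*\alpha_2 = \alpha_1$ on $S$. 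For effectivity of descent data: start with $(Z_j \ra S_j, \alpha_j)$ and gluing isomorphisms satisfying the cocycle condition. Applying the corresponding datum $(Z_j \ra S_j, \alpha_j(\lambda))$ in $\mk_{2d}$, which \emph{is} a stack, one descends to $(Z \ra S, \mu)$ with $\mu \in \pic_{Z/S}(S)$ primitive quasi-ample of degree $2d$. It remains to descend the \emph{full} embedding $\alpha: L \inject \pic_{Z/S}(S)$: the classes $\alpha_j(e_1), \dots, \alpha_j(e_r)$ live in $\pic_{Z_j/S_j}(S_j)$, are compatible under the gluing (since the gluing isomorphisms are in $\mk_L$), and hence — because $\pic_{Z/S}$ is an fppf sheaf and descent of the K3 space carries along descent of its relative Picard functor — they descend to sections of $\pic_{Z/S}(S)$. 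These assemble into a homomorphism $\alpha: L \ra \pic_{Z/S}(S)$; its primitivity and the condition that $\alpha(L^+)$ contain a quasi-ample class can be checked on geometric fibers, where they hold by hypothesis on the $\alpha_j$.

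The main obstacle is the last descent step, namely descending the lattice embedding $\alpha$ rather than just the single polarization class $\mu = \alpha(\lambda)$. The subtlety is that $\pic_{Z/S}$ need not be representable by a scheme in the relative setting (it is only an algebraic space / fppf sheaf in general), so one cannot naively "take the limit" of the $\alpha_j(e_i)$; instead one must argue that descent of $Z \ra S$ along the fppf cover induces descent of the sheaf $\pic_{Z/S}$ and its sections, which requires knowing that formation of $\pic_{Z/S}$ commutes with the relevant base changes — true for K3 spaces since $\pic_{Z/S}$ is unramified and formally \'etale over the base locus where it matters, and the relative Picard scheme exists étale-locally. Once descent of sections of $\pic_{Z/S}$ is in hand, verifying that the descended data still satisfies "primitive" and "$\alpha(L^+)$ meets the ample/quasi-ample cone" is a fiberwise check and hence immediate. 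I would also remark that the argument shows $\mk_L$ is even a stack for the fppf topology, and that the same bootstrapping immediately upgrades to $\mk_{L,G}^*$ by descending the $G$-action, since a monomorphism $G_S \inject \aut_S(Z \ra S, \alpha)$ is itself a section of an fppf sheaf of homomorphisms.
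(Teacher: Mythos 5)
Your proof is correct and, on the substantive point, follows the same route as the paper: effectivity of descent is obtained by first descending the quasi-polarized K3 space $(Z\to S,\alpha(\lambda))$ through the known stack $\mk_{2d}$ and then descending the remaining classes $\alpha(e_i)$ using that $\pic_{Z/S}$ is an \'etale sheaf (the paper dispatches your "main obstacle" in exactly one sentence this way, citing Rizov for the ample case and Maulik for the quasi-polarized case). The only real divergence is in the first stack axiom: you verify that the Isom presheaves are sheaves by gluing, whereas the paper proves the stronger statement that $\sheafisom((Z_1,\alpha_1),(Z_2,\alpha_2))$ is representable by a scheme, realized as a fiber product inside $\sheafisom(Z_1,Z_2)$ of the loci where each $\alpha_2(e_i)$ pulls back to $\alpha_1(e_i)$; your weaker version suffices for the lemma as stated, but the representability is what one wants on hand for the Deligne--Mumford property. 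One small cleanup: rather than "dividing $\alpha(\lambda)$ by its divisibility," just choose $\lambda$ primitive in $L^+$, since $\alpha$ is a primitive embedding and hence $\alpha(\lambda)$ is then automatically a primitive class.
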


\begin{proof}
  We must show that the diagonal $\mk_L \ra \mk_L  \cross \mk_L$ is representable, and that \'etale descent in the
  category $\mk_L$ is effective.

  For the first claim, it suffices to show that if $(Z_1 \ra S,
  \alpha_1)$ and $(Z_2 \ra S, \alpha_2)$ are elements of $\mk_L(S)$, then
\[
\sheafisom((Z_1,\alpha_1), (Z_2,\alpha_2))
\]
is representable by a scheme over $S$.  The functor $\sheafisom(Z_1,
Z_2)$ is represented by a scheme over $S$.  (In fact, it is open in
$\operatorname{Hilb}(Z_1\cross Z_2)$.)  Pullback by isomorphisms gives a pairing $\sheafisom(Z_1,Z_2) \times_S \pic_{Z_2/S} \to \pic_{Z_1/S}$.  Consider some $i$ between $1$ and $r$.  Pulling back the pairing by the section $\alpha_2(e_i):S \to \pic_{Z_2/S}$ induces a morphism $\beta_i: \sheafisom(Z_1,Z_2) \to \pic_{Z_1/S}$.
Then
\[
\sheafisom((Z_1,\alpha_1(e_i)), (Z_2,\alpha_2(e_i))) := \sheafisom(Z_1,Z_2)\times_{\pic_{Z_1/S}, \alpha_1(e_i)} S
\]
is the sub-scheme of $\sheafisom(Z_1,Z_2)$ parametrizing those isomorphisms which take $\alpha_2(e_i)$ to $\alpha_1(e_i)$.  Insofar as $ \sheafisom((Z_1,\alpha_1),(Z_2,\alpha_2))$ is the fiber product over $\sheafisom(Z_1,Z_2)$ of the $r$ different schemes $\sheafisom((Z_1,\alpha_1(e_i)),(Z_2,\alpha_2(e_i)))$, it too is represented by a scheme.

For the second, let $T \ra S$ be \'etale and let $(\til Z \ra T, \til
\alpha)\in \mk_L(T)$ be equipped with $T/S$ descent data.  In
\cite[Lemma 4.3.7]{rizovmoduli}, the author shows that in the ample case $(\til Z \ra T,
\alpha(\lambda))$ descends, as a polarized K3 space, to $S$; the quasi-polarized case follows from \cite[\S 2]{maulik_k314}.  Since
$\pic_{Z/S}$ is a sheaf in the \'etale topology, each $\til
\alpha(e_i)$ descends to $Z/S$.
\end{proof}

\begin{lemma}
\label{lemMLDM}
The category $\mk_L$ is a Deligne-Mumford stack over $\spec\integ$.
\end{lemma}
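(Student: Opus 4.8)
The plan is to bootstrap from the theorems of Rizov and Maulik that $\mk_{2d}$ is a Deligne-Mumford stack, by passing to the forgetful morphism that remembers only the polarization carried by the fixed class $\lambda$. We additionally require $\lambda$ to be primitive in $L$ (possible since $L^+ = L\cap\mathcal C(V^+)$ contains primitive vectors, $\mathcal C(V^+)$ being a nonempty open cone), and put $2d=(\lambda,\lambda)>0$. Given $(Z\ra S,\alpha)\in\mk_L(S)$, the section $\alpha(\lambda)\in\pic_{Z/S}(S)$ is fiberwise primitive (because $\alpha$ is a primitive embedding of lattices and $\lambda$ is primitive in $L$) and quasi-ample (by the definition of $\mk_L$, as recalled in the proof of Lemma~\ref{lemmkstack}), hence a primitive quasi-polarization of degree $2d$; the assignment $(Z\ra S,\alpha)\mapsto(Z\ra S,\alpha(\lambda))$ visibly defines a morphism of stacks $\phi\colon\mk_L\ra\mk_{2d}$.

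First I would check that $\phi$ is representable by algebraic spaces. Let $S$ be a scheme and $(Z\ra S,\mu)\in\mk_{2d}(S)$. Then $\mk_L\cross_{\mk_{2d}}S$ is the $S$-functor whose $T$-points are the primitive embeddings of lattices $\alpha\colon L\inject\pic_{Z_T/T}(T)$ with $\alpha(\lambda)=\mu_T$ and with $\alpha(L^+)$ containing a quasi-ample class. The relative Picard functor $\pic_{Z/S}$ is representable by an algebraic space, and since $H^1(\calo)$ vanishes on the K3 fibers, $\pic_{Z/S}\ra S$ is unramified and so has open diagonal. A choice of $\integ$-basis $e_1=\lambda,e_2,\dots,e_r$ of $L$ identifies a homomorphism $\alpha$ with the tuple $(\alpha(e_i))\in(\pic_{Z/S})^{\cross r}$; among the defining conditions, $(\alpha(e_i),\alpha(e_j))=(e_i,e_j)$ and $\alpha(e_1)=\mu$ are open-and-closed (the intersection pairing takes values in the \'etale $S$-scheme $\ubar{\integ}$, and $\pic_{Z/S}$ has open diagonal over $S$), while fiberwise saturation of $\alpha$ and the presence of a quasi-ample class in $\alpha(L^+)$ are locally closed (compare \cite[\S2]{maulik_k314}). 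Therefore $\mk_L\cross_{\mk_{2d}}S$ is a locally closed subspace of $(\pic_{Z/S})^{\cross r}$, hence representable by an algebraic space, and, being a locally closed subspace of something unramified over $S$, is itself unramified over $S$. Thus $\phi$ is representable by algebraic spaces and unramified.

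It remains to transfer the Deligne-Mumford property along $\phi$. By Lemma~\ref{lemmkstack}, $\mk_L$ is a stack whose diagonal is representable by schemes and separated. Since $\mk_{2d}$ is a Deligne-Mumford stack (\cite[Thm.~4.3.3]{rizovmoduli} for $\mk_{2d}^\circ$, \cite[Prop.~2.1]{maulik_k314} for $\mk_{2d}$), choose a scheme $U$ and an \'etale surjection $U\ra\mk_{2d}$. Because $\phi$ is representable by algebraic spaces, $W:=\mk_L\cross_{\mk_{2d}}U$ is an algebraic space, and the projection $W\ra\mk_L$ --- being the base change of $U\ra\mk_{2d}$ --- is \'etale and surjective; composing with an \'etale atlas of $W$ by a scheme yields an \'etale atlas of $\mk_L$. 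Together with the representable diagonal, this exhibits $\mk_L$ as a Deligne-Mumford stack over $\spec\integ$; unramifiedness of $\phi$ moreover shows the diagonal of $\mk_L$ to be unramified.

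The main obstacle --- really the only non-formal step --- is the middle paragraph: identifying the fiber of the forgetful morphism with a locally closed subspace of a power of the relative Picard space, which calls for some care about the meaning of ``primitive embedding of $L$'' over a general base (fiberwise saturation) and about arranging that $\lambda$ stays primitive, so that $\phi$ genuinely lands in $\mk_{2d}$ and $\alpha(\lambda)$ is a bona fide primitive quasi-polarization. Given that, transferring the Deligne-Mumford property is routine descent along a representable morphism to a Deligne-Mumford stack.
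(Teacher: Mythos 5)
Your proposal is correct and follows the same overall strategy as the paper: fix $\lambda\in L^+$, bootstrap along the forgetful morphism $\phi_\lambda:\mk_L\to\mk_{2d}$, and reduce to showing that $\phi_\lambda$ is relatively representable over the known Deligne--Mumford stack $\mk_{2d}$. Where you diverge is in the one non-formal step, the representability of the fibers of $\phi_\lambda$. The paper (following Beauville) observes that the stabilizer $H_\lambda\subset\orth_L(\integ)$ of $\lambda$ is finite because $\lambda^\perp\subset L$ is negative definite, and identifies the fiber over an $S$-point of $\mk_{2d(\lambda)}$ as a torsor under $H_\lambda$, hence representable; this is shorter and sidesteps any discussion of the relative Picard space. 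You instead realize the fiber as a locally closed subspace of $(\pic_{Z/S})^{\cross r}$, using that $\pic_{Z/S}$ is an unramified algebraic space over $S$ and that the pairing conditions are open and closed because the intersection form takes values in the \'etale scheme $\ubar\integ$. Your route buys a little more (it shows $\phi_\lambda$ is unramified, hence that the diagonal of $\mk_L$ is unramified, and it correctly handles the fact that two embeddings $\alpha$ with $\alpha(\lambda)=\mu$ need not have the same image in $\pic$, so the fiber is really only a disjoint union of pseudo-torsors), and your insistence that $\lambda$ be primitive in $L$ is a point the paper glosses over but which is needed for $\phi_\lambda$ to land in the stack of \emph{primitively} quasi-polarized surfaces. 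The cost is that you must know that fiberwise saturation of $\alpha(L)$ and the quasi-ampleness condition cut out a locally closed locus; you defer this to \cite[\S 2]{maulik_k314}, which is the same level of rigor the paper adopts elsewhere, whereas the finite-group argument avoids the question entirely. Both arguments are acceptable; yours is more hands-on, the paper's is slicker.
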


\begin{proof}
  Because $\mk_{2d(\lambda)}$ is known to be a Deligne-Mumford
  stack, it suffices to show that the forgetful morphism
\[\xymatrix{
\mk_L \ar[r]^{\phi_\lambda} & \mk_{2d(\lambda)} \\
(Z \ra S, \alpha) \ar@{|->}[r] & (Z \ra S, \alpha(\lambda))
}
\]
is relatively representable \cite[Prop. 4.5.(ii)]{lmbstacks}.  Now proceed as in \cite{beauvillefanok3}.  Let $H_\lambda \subset \orth_L(\integ)$ be the subgroup which stabilizes $\lambda$.  Since $\lambda^\perp$ is negative definite, $H_\lambda$ is finite.   Given an $S$-point $S \ra \mk_{2d(\lambda)}$,
\[
\mk_L \cross_{\phi_\lambda, \mk_{2d(\lambda)}}S,
\]
if nonempty, is a torsor under $H_\lambda$, and in particular is representable.
\end{proof}

\begin{proposition}
\label{propmlsmooth}
The category $\mk_L$ is a smooth Deligne-Mumford stack over $\spec
\integ[1/2\Delta_L]$
of relative dimension $20-r$.
\end{proposition}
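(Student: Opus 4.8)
The plan is to deduce smoothness from the infinitesimal criterion, reducing everything to the deformation theory of a single $L$-polarized K3 surface. Since $\mk_L$ is already known to be a Deligne--Mumford stack, locally of finite presentation over $\spec\integ$, it suffices to show that $\mk_L\to\spec\integ[1/2\Delta_L]$ is formally smooth of relative dimension $20-r$, and both statements may be checked on the formal deformation functor at each geometric point. So fix $\xi_0=(Z_0,\alpha_0)$ lying over an algebraically closed field $k$ with $\operatorname{char}k=0$ or $\operatorname{char}k=p\nmid 2\Delta_L$, write $H:=H^2_\derham(Z_0/k)$ with its Hodge filtration $\fil^2\subset\fil^1\subset H$ of ranks $1,21,22$, and consider the deformation functor $\defo_{\xi_0}$ over $W(k)$, which deforms $Z_0$ together with the $r$ line bundles $\alpha_0(e_1),\dots,\alpha_0(e_r)$ (equivalently, together with the lattice embedding $\alpha_0$); whether $\alpha_0(L^+)$ meets the ample or merely the quasi-ample cone is immaterial, since big-and-nef is an open condition in families. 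First I would recall that the bare deformation functor $\defo_{Z_0}$ is unobstructed: $H^0(Z_0,T_{Z_0})=H^2(Z_0,T_{Z_0})=0$ (the latter from $\omega_{Z_0}\iso\calo_{Z_0}$ and $h^0(Z_0,\Omega^1_{Z_0})=0$ by Serre duality), so $\defo_{Z_0}\iso\spf W(k)\powser{t_1,\dots,t_{20}}$ with $20=h^1(Z_0,T_{Z_0})$. Over $\defo_{Z_0}$ the de Rham cohomology of the universal formal K3 carries a Hodge filtration specializing to $\fil^\bullet$ at $\xi_0$, each $c_1(\alpha_0(e_i))$ extends canonically to a flat section, and --- because $Z_0$ is a K3 surface, so $H^1(\calo_{Z_0})=0$ and $\pic$ is unramified --- the classical Hodge-theoretic criterion for a divisor class to deform identifies ``$\alpha_0(e_i)$ deforms'' with ``its flat extension stays in $\fil^1$'', i.e.\ with the vanishing of a section of an invertible sheaf on $\defo_{Z_0}$. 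Thus $\defo_{\xi_0}\inject\defo_{Z_0}$ is cut out in the smooth $20$-dimensional $\defo_{Z_0}$ by $r$ equations.

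The crux is to show these $r$ equations form a regular sequence whose common zero locus is formally smooth, equivalently that their differentials at $\xi_0$ are linearly independent functionals on $T\defo_{Z_0}\iso H^1(Z_0,\Omega^1_{Z_0})=\fil^1/\fil^2$. By Kodaira--Spencer and Griffiths transversality, the differential of the $i$-th equation is, up to the perfect cup-product pairing on $\fil^1/\fil^2$, the functional $w\mapsto(w,c_1(\alpha_0(e_i)))$; hence the $r$ differentials are independent exactly when the images of $c_1(\alpha_0(e_1)),\dots,c_1(\alpha_0(e_r))$ in $H/\fil^2$ are linearly independent. Now these classes span a subspace $W\subseteq\fil^1\subseteq H$ whose Gram matrix for the cup-product pairing has determinant $\pm\Delta_L$, a unit in $k$ by hypothesis, so the pairing is nondegenerate on $W$; on the other hand $\fil^2$ is exactly the annihilator of $\fil^1$ under the (perfect) pairing on $H$, so $\fil^2\subseteq W^\perp$, and a nonzero vector of $\fil^2$ cannot lie in the nondegenerate subspace $W$. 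Therefore $W\cap\fil^2=0$, the $r$ images in $H/\fil^2$ are independent, and $\defo_{\xi_0}$ is formally smooth over $W(k)$ of relative dimension $20-r$ for every such $k$. Combining this over all geometric points with local finite presentation yields that $\mk_L$ is smooth over $\integ[1/2\Delta_L]$ of relative dimension $20-r$. I expect this transversality step to be the only genuine obstacle; everything else is standard K3 deformation theory.

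Finally, a remark on the base $\integ[1/2\Delta_L]$. Only $\Delta_L$ is genuinely forced, by the transversality above; the prime $2$ is inverted for the customary reasons that the crystalline and de Rham period theory of K3 surfaces --- in particular the equivalence of ``a line bundle deforms'' with ``$c_1$ stays in $\fil^1$'' --- is cleanest away from characteristic $2$. One may also package the argument through the forgetful morphism $\phi_\lambda\colon\mk_L\to\mk_{2d(\lambda)}$ of Lemma~\ref{lemMLDM}, using that for any $S\to\mk_{2d(\lambda)}$ the base change $\mk_L\times_{\mk_{2d(\lambda)}}S$, when nonempty, is a torsor under the \emph{constant} finite group $H_\lambda$ and hence finite \'etale over $S$; combined with the known smoothness of $\mk_{2d(\lambda)}$ over $\integ[1/2d(\lambda)]$ and with the freedom (when $\Delta_L$ is prime to $p$, so $L\tensor\integ_p$ is unimodular and represents a $p$-adic unit) to choose $\lambda\in L^+$ with $(\lambda,\lambda)$ prime to $p$, this reduces the claim to the smoothness, of codimension $r-1$, of the locus in $\mk_{2d(\lambda)}$ over which the remaining generators of $L$ deform --- which is once again the transversality above, and which may also be invoked as the smoothness of a Kudla--Rapoport special cycle $\mathcal Z(T)$ attached to a Gram matrix $T$ with $\det T$ prime to $p$.
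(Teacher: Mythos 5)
Your argument is correct and follows essentially the same route as the paper, which likewise reduces to the formal deformation space at a geometric point (Proposition \ref{propdeformlattice}), realizes $\defo(Z,\alpha)$ as the vanishing locus of $r$ functions on the smooth $20$-dimensional $\defo(Z)$, and uses the invertibility of $\Delta_L$ to see that the Chern classes span an $r$-dimensional subspace on which the pairing is nondegenerate, forcing the tangent space to have codimension exactly $r$. Your spelled-out transversality step ($W\cap\fil^2=0$ because $\fil^2$ lies in the radical of the pairing on $\fil^1$ while the pairing on $W$ is nondegenerate) is just a more explicit version of the paper's closing sentence, and the alternative packaging via $\phi_\lambda$ is only a variant of Lemma \ref{lemMLDM}.
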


\begin{proof}
  Given Lemma \ref{lemMLDM}, it suffices to show that the local
  deformation space of a  quasi-ample $L$-polarized K3 surface is smooth.
  In characteristic zero, this is asserted in
  \cite[Prop.~2.1]{dolgachev_lattice}, and details are provided in \cite[Prop.~1.4]{beauvillefanok3}.   In positive characteristic, this follows from Deligne
  and Illusie's deformation theory; see Proposition \ref{propdeformlattice}
  below.
\end{proof}

Now consider the moduli spaces of lattice-polarized K3
surfaces with group action.

\begin{lemma}
\label{lemautproper}
Suppose $(Z \ra S, \alpha) \in \mk_L(S)$.  Then $\aut(Z\ra S, \alpha)$ is represented by a proper finite group scheme over $S$.
\end{lemma}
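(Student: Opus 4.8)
The plan is to get representability and unramifiedness almost for free, then to reduce to a single quasi-ample polarisation and its canonical model, and finally to settle universal closedness by the Matsusaka--Mumford extension theorem. First I would take $Z_1 = Z_2 = Z$ and $\alpha_1 = \alpha_2 = \alpha$ in the construction in the proof of Lemma~\ref{lemmkstack}: this represents $\aut(Z \ra S,\alpha) = \sheafisom((Z,\alpha),(Z,\alpha))$ by a scheme over $S$, and composition of automorphisms makes it a group scheme. Because each geometric fibre $Z_{\bar s}$ is a K3 surface, $H^0(Z_{\bar s},\calt_{Z_{\bar s}}) \iso H^0(Z_{\bar s},\Omega^1_{Z_{\bar s}}) = 0$, so $\aut(Z \ra S,\alpha) \ra S$ has no tangent vectors along the identity section and is therefore unramified --- equivalently, this is the unramifiedness of the diagonal of the Deligne--Mumford stack $\mk_L$ (Lemma~\ref{lemMLDM}) --- hence quasi-finite and separated over $S$. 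A separated, quasi-finite, universally closed morphism of finite type is finite, and finite morphisms are proper; so the remaining point is to establish that $\aut(Z \ra S,\alpha) \ra S$ has finite type and is universally closed.

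For both I would pass to a single polarisation and its canonical model. With $\lambda \in L^+$ the element fixed above, put $\mu := \alpha(\lambda)$, a quasi-ample class on $Z \ra S$, so $(Z \ra S,\mu)$ is an object of the (non-separated) stack $\mk_{2d(\lambda)}(S)$, exactly as in the proof of Lemma~\ref{lemMLDM}. Since $\lambda$ is an integral combination of the $e_i$, any automorphism fixing all the $\alpha(e_i)$ fixes $\mu$, and $\aut(Z \ra S,\alpha)$ is cut out inside $\aut(Z \ra S,\mu)$ by the conditions $\phi^*\alpha(e_i) = \alpha(e_i)$; these are closed because $\pic_{Z/S}$ is separated (unramified, even, for a K3 space), so $\aut(Z \ra S,\alpha)$ is a closed subgroup scheme of $\aut(Z \ra S,\mu)$. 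Next I would introduce the relative canonical model $Z^*_S := \underline{\operatorname{Proj}}_S \bigoplus_{n \geq 0} \pi_*\mu^{\tensor n}$: because $\mu$ is big and nef on every fibre, $R^i\pi_*\mu^{\tensor n} = 0$ for $i > 0$ and $n \geq 1$, hence $\pi_*\mu^{\tensor n}$ is locally free with formation commuting with base change, $Z^*_S \ra S$ is proper and flat with relatively ample $\mu^*_S := \calo_{Z^*_S}(1)$ satisfying $\rho_S^*\mu^*_S = \mu$, and on fibres $\rho_S\colon Z_S \ra Z^*_S$ is the minimal resolution of a K3 surface with rational double points (cf.\ \cite[\S 2]{maulik_k314}, \cite{perak3}). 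Transport along $\rho_S$ identifies $\aut(Z \ra S,\mu)$ with $\aut(Z^*_S \ra S,\mu^*_S)$: an automorphism of $(Z,\mu)$ acts on $\bigoplus_n \pi_*\mu^{\tensor n}$ and hence on $Z^*_S$, and an automorphism of $(Z^*_S,\mu^*_S)$ lifts uniquely to the minimal resolution $Z$ and preserves $\rho_S^*\mu^*_S = \mu$. Since $\mu^*_S$ is ample, $\aut(Z^*_S \ra S,\mu^*_S)$ is quasi-projective over $S$ (graphs have a fixed Hilbert polynomial), in particular of finite type; so $\aut(Z \ra S,\alpha)$ is of finite type, and it remains only to prove it is universally closed.

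The main work is this last point, where the subtlety is that $\mk_{2d}$ is not separated; the way out is that the failure of separatedness lies in the choice of birational model $Z$, not in the automorphisms of a fixed $Z$. I would verify the valuative criterion for $\aut(Z^*_S \ra S,\mu^*_S)$: given a discrete valuation ring $R$ with fraction field $K$, a morphism $\spec R \ra S$, and an automorphism $g_K$ of $(Z^*_{R,K},\mu^*_{R,K})$, I must extend $g_K$ over $R$ (uniqueness is automatic, $Z^*_R \ra \spec R$ being separated). The fibres of $Z^*_R \ra \spec R$ are normal projective surfaces with trivial dualising sheaf, hence not ruled, and $\mu^*_R$ is ample, so the Matsusaka--Mumford extension theorem for polarised non-ruled varieties (see \cite[\S 5]{huybrechtsk3}) extends $g_K$ to an automorphism of $(Z^*_R,\mu^*_R)$. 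This establishes universal closedness, so $\aut(Z^*_S \ra S,\mu^*_S) \cong \aut(Z \ra S,\mu)$ is finite over $S$, and therefore so is its closed subgroup scheme $\aut(Z \ra S,\alpha)$; being finite it is proper.

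The hard part, as signalled, is the quasi-ample case: it rests on the relative canonical contraction behaving well in families --- flatness of $Z^*_S$, compatibility of its formation with base change, ampleness of $\mu^*_S$, and the lifting of automorphisms between $Z$ and $Z^*_S$ --- which is the same input from \cite{maulik_k314} and \cite{perak3} already invoked in the proof of Lemma~\ref{lemmkstack}. If one prefers, those sources also record directly that the automorphism group scheme of a family of quasi-polarised K3 surfaces is finite, in which case only the closed-immersion reduction of the second paragraph is required; in the ample case no canonical model is needed at all, since $\mk^\circ_{2d}$ is already a separated Deligne--Mumford stack by Matsusaka--Mumford, so its automorphism group schemes are finite outright.
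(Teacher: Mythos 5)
Your proof is correct and follows essentially the same route as the paper: realize $\aut(Z\ra S,\alpha)$ as a closed subgroup scheme of $\aut(Z\ra S,\alpha(\lambda))$, get finiteness of the latter in the quasi-polarized case from \cite{rizovmoduli} and \cite{maulik_k314}, and conclude properness via Matsusaka--Mumford. Your explicit relative-canonical-model argument is an unpacking of the citation to \cite[p.~2369]{maulik_k314} that the paper makes, as you yourself observe in your closing paragraph.
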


\begin{proof}
The automorphism
functor $\aut_S(Z)$ is represented by a separated, unramified group
scheme over $S$
\cite[Thm.~3.3.1]{rizovmoduli}.  Moreover, $\aut_S(Z\ra S,
\alpha(\lambda))$ is a closed, finite, subgroup scheme of
$\aut_S(Z)$ (see \cite[Prop.~3.3.3]{rizovmoduli} for the polarized case; the extension to quasi-polarizations follows from \cite[p.2369]{maulik_k314}).
As in the proof of Lemma \ref{lemmkstack}, $\aut_S(Z\ra S,\alpha)$ is a sub-$S$-group scheme of $\aut_S(Z\ra S, \alpha(\lambda))$.
The claimed properness follows from \cite[Thm.~2]{matsusakamumford}.
\end{proof}

\begin{lemma}
\label{lemforgetGclosed}
The forgetful morphism $\mk_{L,G}^* \ra \mk_L$ is finite, and $\mk_{L,G}^*$ is a Deligne-Mumford stack.
\end{lemma}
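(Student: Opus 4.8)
The plan is to show that the forgetful morphism $p\colon \mk_{L,G}^*\to\mk_L$ is representable and finite; since $\mk_L$ is a Deligne--Mumford stack by Lemma~\ref{lemMLDM}, mere representability of $p$ then forces $\mk_{L,G}^*$ to be Deligne--Mumford (exactly as in the proof of Lemma~\ref{lemMLDM}, via \cite[Prop.~4.5.(ii)]{lmbstacks}), so the substantive point is the finiteness. Both properties may be tested after base change to a scheme, so fix $(Z\to S,\alpha)\in\mk_L(S)$ and set $A:=\aut_S(Z\to S,\alpha)$, which by Lemma~\ref{lemautproper} is a finite, separated, unramified group scheme over $S$. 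The fiber product $\mk_{L,G}^*\times_{p,\mk_L}S$ is the functor on $S$-schemes sending $T\to S$ to the set of monomorphisms of $T$-group schemes $G_T\hookrightarrow A_T$; call it $\underline{\mathrm{Mon}}_S(G_S,A)$. It thus suffices to show $\underline{\mathrm{Mon}}_S(G_S,A)$ is representable by a finite $S$-scheme. As throughout in the case $G=\mmu_n$ we work over schemes on which $\#G$ is invertible, so that $G_S\to S$ is finite \'etale; this hypothesis is what makes the constructions below well-behaved.

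First I would represent the ambient homomorphism functor. Since $G_S\to S$ is finite locally free and $A\to S$ is affine (being finite), the Weil restriction $\Pi:=\Pi_{G_S/S}(A\times_S G_S)$ represents the functor of all $S$-scheme morphisms $G_S\to A$. \'Etale-locally on $S$, the map $G_S\to S$ is a disjoint union of $d$ copies of $S$ and $\Pi$ is the $d$-fold fiber product $A\times_S\cdots\times_S A$; since the finiteness and unramifiedness of $A\to S$ are inherited by such fiber products and are fppf-local on the base, $\Pi\to S$ is finite and unramified. The condition that a morphism $G_S\to A$ respect the group laws is the agreement of two $S$-morphisms $G_S\times_S G_S\to A$ into the separated scheme $A$, hence is a closed condition; so the homomorphism functor $\mathcal H:=\underline{\mathrm{Hom}}_{S\text{-gp}}(G_S,A)$ is represented by a closed subscheme of $\Pi$, again finite and unramified over $S$.

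It remains to cut out the monomorphisms. Over $\mathcal H$, with universal homomorphism $\rho^{\mathrm{univ}}\colon G_{\mathcal H}\to A_{\mathcal H}$, the kernel $\ker\rho^{\mathrm{univ}}=G_{\mathcal H}\times_{A_{\mathcal H},e}\mathcal H$ is an \emph{open} subscheme of $G_{\mathcal H}$, because the unit section $e$ of the unramified group scheme $A_{\mathcal H}$ is an open immersion; being also closed, it is a clopen subscheme of the finite \'etale $\mathcal H$-scheme $G_{\mathcal H}$, hence is itself finite \'etale over $\mathcal H$, of locally constant rank. Therefore $\underline{\mathrm{Mon}}_S(G_S,A)$---the locus in $\mathcal H$ where this kernel is trivial, equivalently where its rank equals $1$---is a clopen subscheme of $\mathcal H$, in particular representable and finite over $S$. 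This proves $p$ representable and finite, and hence $\mk_{L,G}^*$ a Deligne--Mumford stack. The step I expect to require the most care is precisely this representability and finiteness of $\underline{\mathrm{Mon}}_S(G_S,A)$: the difficulty is that $A=\aut_S(Z\to S,\alpha)$ need not be flat over $S$, and the way around it is to exploit that $A$ is unramified (so its unit section is an open immersion and kernels are clopen) together with the fact that $G_S\to S$ is finite \'etale (so Weil restriction along it preserves finiteness and unramifiedness).
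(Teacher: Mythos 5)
Your argument is correct, and its skeleton matches the paper's: reduce to representability and finiteness of the fiber $\mathrm{Mon}_S(G_S,A)$ with $A=\aut_S(Z\to S,\alpha)$, using Lemma~\ref{lemautproper} as the key input, and then invoke \cite[Prop.~4.5(ii)]{lmbstacks} for the Deligne--Mumford property. Where you genuinely diverge is in how finiteness is obtained. The paper represents $\hom(G_S,A)$, observes that injectivity is an open condition, and then deduces finiteness of $\mk_{L,G}^*\to\mk_L$ indirectly: properness and finitude of $A$ (via Matsusaka--Mumford) make the forgetful map proper and quasi-finite, hence finite. You instead build the fiber explicitly as a clopen subscheme of a closed subscheme of the Weil restriction $\Pi_{G_S/S}(A\times_S G_S)$, which is already finite unramified over $S$; finiteness then falls out of the construction, and your observation that the kernel of the universal homomorphism is \emph{clopen} (because the unit section of the unramified group scheme $A$ is an open immersion) upgrades the paper's ``injectivity is open'' to a statement that also delivers properness of the monomorphism locus for free. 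The trade-off is that your route leans on $G_S\to S$ being finite \'etale --- i.e.\ on $\#G$ being invertible on the base, which you rightly flag and which holds in all the paper's applications ($G=\mmu_n$ over $\integ[1/2\Delta_L n]$) but is not literally a hypothesis of the lemma as stated for a general finite group scheme $G$; the paper's properness argument does not visibly use this. Both proofs are sound in the regime where the lemma is actually used.
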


\begin{proof}
First, the forgetful functor $\mk_{L,G}^* \ra \mk_L$ is
relatively representable.  Indeed, for any affine scheme $S$ and any
$(Z \ra S,\alpha) \in \mk_L(S)$, both $G_S$ and $\aut_S(Z\ra
S,\alpha)$ are relatively representable, and thus $\hom(G_S,
\aut_S(Z\ra S,\alpha))$ is representable, too; and the condition that
a homomorphism be injective is open.  Therefore, $\mk_{L,G}^*$ is
also a Deligne-Mumford stack.  The properness (and finitude) in Lemma
\ref{lemautproper} imply that $\mk_{L,G}^* \ra \mk_L$ is
proper and quasifinite, thus finite.
\end{proof}

 \begin{proposition}
 The category $\mk_{L,\ubar\chi}$ is a smooth Deligne-Mumford stack
 over $\integ[\zeta_n,1/6\Delta_Ln]$
of relative dimension $m(\chi^\omega)-1$.
 \end{proposition}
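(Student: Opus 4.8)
\emph{Proof sketch.} The plan is to get the Deligne--Mumford property for free and then obtain smoothness and the dimension from $\mmu_n$-equivariant deformation theory. Indeed, $\mk_{L,\ubar\chi} = \mk_{L,\mmu_n,\chi^\omega,\chi}$ is by construction an open and closed substack of $\mk_{L,\mmu_n}^*$, which is a Deligne--Mumford stack by Lemma \ref{lemforgetGclosed}; so the real content is that it is smooth over $\integ[\zeta_n,1/6\Delta_Ln]$ of relative dimension $m(\chi^\omega)-1$. The organizing principle is that on a base where $n$ is invertible the group scheme $\mmu_n$ is linearly reductive, the moduli problem $\mk_{L,\mmu_n}^*$ is the ``$\mmu_n$-fixed-point locus'' inside the smooth stack $\mk_L$ of Proposition \ref{propmlsmooth} for the action induced by $\rho$, and $\mmu_n$-equivariant deformation theory reduces to taking invariants of ordinary deformation theory. (Only the invertibility of $n$, together with that of $2\Delta_L$ already needed for $\mk_L$, is used below; the cyclotomic structure and the extra inverted prime are retained for compatibility with the Shimura-theoretic constructions of \S\ref{SS:shimintegral}.)

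For smoothness, I would fix a geometric point $x = (Z,\alpha,\rho)$ of $\mk_{L,\ubar\chi}$ over the point $x' = (Z,\alpha)$ of $\mk_L$, with local deformation functors $D$ and $D'$ respectively. By Proposition \ref{propmlsmooth} (base-changed to $\integ[\zeta_n,1/6\Delta_Ln]$), $D'$ is formally smooth of dimension $20-r$, so its obstruction space vanishes. The homomorphism $\rho$ makes $\mmu_n$ act on $D'$, and the deformation theory of $(Z,\alpha,\rho)$ is the $\mmu_n$-equivariant deformation theory of $(Z,\alpha)$; since $\mmu_n$ is linearly reductive over the base, the functor of $\mmu_n$-invariants is exact, so the tangent and obstruction spaces of $D$ are the $\mmu_n$-invariants of those of $D'$. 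Hence $D$ is unobstructed -- so $\mk_{L,\mmu_n}^*$, and a fortiori its open and closed substack $\mk_{L,\ubar\chi}$, is smooth over $\integ[\zeta_n,1/6\Delta_Ln]$ -- and the relative dimension at $x$ equals $\dim(T_{x'}\mk_L)^{\mmu_n}$. (Equivalently, $D \iso (D')^{\mmu_n}$ is the fixed formal subscheme of a linearly reductive action on a smooth formal scheme, hence smooth.)

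For the dimension I would decompose $T_{x'}\mk_L$ as a $\mmu_n$-representation via the de Rham realization. Let $\pi\colon Z \to S$ and equip $\calh^2_\derham(Z/S)$ with its Hodge filtration, with graded pieces $\gr^i = \gr^i_F\calh^2_\derham(Z/S)$; on all of this $\mmu_n$ acts, by construction via $\chi$ on $\calh^2_\derham(Z/S)$ and via $\chi^\omega$ on $\gr^2 = \calh^{2,0}(Z/S)$. Relative Serre duality identifies $\gr^0 = R^2\pi_*\calo_Z$ with $\calh^{2,0}(Z/S)\dual$, on which $\mmu_n$ acts via $(\chi^\omega)\inv$; hence in the representation ring $[\gr^1] = \chi - \chi^\omega - (\chi^\omega)\inv$. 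The Deligne--Illusie deformation theory already invoked in Proposition \ref{propmlsmooth} (cf.\ Proposition \ref{propdeformlattice}) identifies $T_{x'}\mk_L$, $\mmu_n$-equivariantly, with $\hom(\gr^2,\gr^1\cap L^\perp)$, where $L^\perp$ is the orthogonal complement of $\alpha(L)$ in $\calh^2_\derham(Z/S)$; this is locally free of rank $20-r$, as it must be. Therefore $\dim(T_{x'}\mk_L)^{\mmu_n}$ is the multiplicity of $\chi^\omega$ in $\gr^1\cap L^\perp$. Because $\alpha(L)$ lies in the trivial-character part of $\calh^2_\derham(Z/S)$ -- which is exactly the normalization $m_\chi(\chi_0) = \rank(L)$ -- and $\chi^\omega \neq \chi_0$, this multiplicity equals the multiplicity of $\chi^\omega$ in $\gr^1$, i.e.\ $m(\chi^\omega)$ minus the multiplicity of $\chi^\omega$ in $\gr^2 \oplus \gr^0$. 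Since $\gr^2$ contributes a single $\chi^\omega$ while $\gr^0$ contributes $(\chi^\omega)\inv$, and $\chi^\omega$ is faithful of order $n\geq 3$ so that $(\chi^\omega)\inv \neq \chi^\omega$, the relative dimension is $m(\chi^\omega)-1$. All these multiplicities are locally constant and, by the rigidity of $\mmu_n$-representations discussed in \S\ref{subseccatk3}, may be computed on any geometric fiber, so the answer does not depend on the residue characteristic.

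The step I expect to be the main obstacle is carrying out the equivariant deformation theory of the second paragraph \emph{integrally} rather than merely over $\cx$, where it reduces to classical Hodge theory: one needs the exactness of $\mmu_n$-invariants -- that is, linear reductivity of $\mmu_n$, which is precisely why $n$ must be invertible -- together with the integral smoothness of $\mk_L$ from Proposition \ref{propmlsmooth}, to conclude that $D$ is unobstructed in all characteristics away from $2\Delta_Ln$. A secondary, routine point is to check that $\alpha(L)$ embeds into $\gr^1_F\calh^2_\derham(Z/S)$ with locally free quotient and is orthogonally complemented there, so that the $\mmu_n$-decomposition above makes sense over the whole base; this follows from the lattice-polarized formalism already in place.
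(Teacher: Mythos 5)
Your proposal is correct in substance and follows the same basic strategy as the paper: the Deligne--Mumford property is inherited from Lemma \ref{lemforgetGclosed}, and the real content is the equivariant tangent-space computation, which in the paper is Lemma \ref{lemdeformaut}. The differences are worth recording. For smoothness the paper does not argue abstractly via linear reductivity; it invokes the crystalline local Torelli theorem to identify $\defo(Z,\alpha,\rho)$ inside $\defo(Z,\alpha)$ as the locus where the $\mmu_n$-action extends to the lifted Hodge filtration, and then reads off the tangent space as $\hom_G(\fil^2 H^2_\derham(Z/k), c_1(\alpha)^\perp\cap\fil^1/\fil^2)$. Your route through exactness of $\mmu_n$-invariants reaches the same conclusion and is arguably more transparent about why unobstructedness of $\defo(Z,\alpha)$ (Propositions \ref{propmlsmooth} and \ref{propdeformlattice}) transfers to the fixed functor; in both arguments the essential input is that $n$ is invertible on the base. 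Your dimension count via the character identity $[\gr^1]=\chi-\chi^\omega-(\chi^\omega)\inv$ is more explicit than the paper's one-line remark that the $\chi^\omega$-eigenspace of $\fil^1$ lies in $c_1(\alpha)^\perp$, and it surfaces a point the paper elides: the final step requires $(\chi^\omega)\inv\neq\chi^\omega$, i.e.\ $n\geq 3$. For $n=2$, which the paper's setup allows (cf.\ \S\ref{subsec:g6}), your computation yields $m(\chi^\omega)-2$ rather than $m(\chi^\omega)-1$; this is consistent with the type IV period domain being a quadric of dimension $\dim L^\perp_\cx(\chi^\omega)-2$, and with the genus-six example, where $m(\chi^\omega)=17$ while $\dim\stack N_6=15$. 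So your explicit restriction to $n\geq 3$ in the last step is not a gap in your argument but an implicit correction to the statement as given; for $n\geq 3$ your proof is complete modulo the standard facts about equivariant deformation theory that you flag.
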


 \begin{proof}
 All that needs to be checked is smoothness; this is done in Lemma \ref{lemdeformaut}.
 \end{proof}

\subsection{Local calculations}
\label{subseclocal}

If $Z/\cx$ is a complex K3 surface, then (the local Torelli theorem
asserts that) the deformation theory of $Z$ is well-captured by its
Hodge theory.  In particular, let $\defo(Z)$ be the deformation
functor of $Z$, with base point $s$.
Then there is a canonical isomorphism
\[
\xymatrix{
T_s\defo(Z) \ar[r]^-\twiddle &
\hom(H^{2,0}(Z),H^{2,0}(Z)^\perp/H^{2,0}(Z)).
}
\]
There is a parallel deformation theory for K3 surfaces in arbitrary
characteristic, which we review here.  Let $k$ be an algebraically
closed field of characteristic $p >0 $, with ring of Witt vectors $W = W(k)$.

Let $Z/k$ be a K3 surface.  The deformation functor $\defo(Z)$ is formally smooth over $\spf W$ of relative dimension 20; $\defo(Z)$ is pro-represented by a formal scheme noncanonically isomorphic to $\spf W\powser{t_1, \cdots, t_{20}}$ \cite[Cor.\ 1.2]{delignelift},
\cite[4.1.1]{rizovmoduli}.  Let $s$ be the base point of $\defo(Z)$, corresponding to $Z/k$ itself.

\begin{lemma}
\label{lemtangent}
There is a canonical isomorphism of $k$-vector spaces
\[\xymatrix{
\ts_s \defo(Z) \ar[r]^-\twiddle & \hom(\fil^2 H^2_\derham(Z/k), \gr^1
H^2_\derham(Z/k)).
}
\]
\end{lemma}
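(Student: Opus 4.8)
The plan is to compute the tangent space $\ts_s\defo(Z)$ directly from deformation theory, and then match it against the Hodge-filtration description on the right. First I would recall that by the cotangent complex / Kodaira--Spencer formalism, first-order deformations of the smooth proper $k$-scheme $Z$ over the dual numbers $k[\epsilon]$ are classified by $H^1(Z,\calt_Z)$, so $\ts_s\defo(Z)\iso H^1(Z,\calt_Z)$. For a K3 surface the pairing $\calt_Z\cross\calt_Z\ra\omega_Z$ discussed in \S\ref{subsec:k3def} induces $\calt_Z\iso\Omega_Z$ (using $\omega_Z\iso\calo_Z$), hence a canonical identification $H^1(Z,\calt_Z)\iso H^1(Z,\Omega_Z)$. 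So the task reduces to producing a canonical isomorphism $H^1(Z,\Omega_Z)\iso\hom(\fil^2 H^2_\derham(Z/k),\gr^1 H^2_\derham(Z/k))$.

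Next I would invoke the Hodge-to-de Rham spectral sequence for $Z/k$, which degenerates at $E_1$ (for a K3 surface this is classical — e.g.\ it follows from $H^1(Z,\calo_Z)=0$, Serre duality giving $H^1(Z,\omega_Z)=0$, and the Deligne--Illusie argument in characteristic $p$, or one simply cites that K3 surfaces lift to $W_2$ and have the expected Hodge numbers, as used in Proposition \ref{propmlsmooth}). Degeneration gives $\gr^i H^2_\derham(Z/k)\iso H^i(Z,\Omega^{2-i}_{Z/k})$; in particular $\fil^2 H^2_\derham(Z/k)=H^2(Z,\omega_Z)=H^0(Z,\omega_Z)\iso k$ (a one-dimensional space, the ``$H^{2,0}$''), and $\gr^1 H^2_\derham(Z/k)\iso H^1(Z,\Omega_{Z/k})$. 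Therefore $\hom(\fil^2 H^2_\derham,\gr^1 H^2_\derham)\iso\hom(H^2(Z,\omega_Z),H^1(Z,\Omega_Z))\iso H^1(Z,\Omega_Z)$, the last step because $H^2(Z,\omega_Z)$ is canonically $k$ via Serre duality / the trace map and $\omega_Z\iso\calo_Z$. Composing with the identification from the previous paragraph yields the desired canonical isomorphism, and one checks it is functorial in $Z$ (all the maps used — Kodaira--Spencer, the duality $\calt_Z\iso\Omega_Z$, the Serre trace — are canonical).

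The only real subtlety, and the step I would be most careful about, is the characteristic-$p$ input: in positive characteristic the Hodge--de Rham spectral sequence need not degenerate in general, so one must justify degeneration and the vanishing of the ``extra'' Hodge groups specifically for K3 surfaces. Here the clean route is to cite that $\defo(Z)$ is formally smooth of dimension $20$ (already recorded in the excerpt from \cite{delignelift}, \cite{rizovmoduli}) — which forces $h^1(\calt_Z)=20$ and, together with $\chi(\calt_Z)$ and Serre duality, pins down all the relevant Hodge numbers — or to appeal directly to the Deligne--Illusie lifting-to-$W_2$ criterion, which applies since K3 surfaces lift. With degeneration in hand the rest is formal. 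I would also remark that when $k=\cx$ this recovers the classical statement $\ts_s\defo(Z)\iso\hom(H^{2,0}(Z),H^{2,0}(Z)^\perp/H^{2,0}(Z))$ stated above, since there $\gr^1 H^2_\derham\iso H^{2,0}(Z)^\perp/H^{2,0}(Z)$ under the pairing.
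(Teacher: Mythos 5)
Your argument is correct in substance, but it takes a genuinely different route from the paper. You compute $\ts_s\defo(Z)\iso H^1(Z,\calt_Z)$ by Kodaira--Spencer, convert $\calt_Z$ to $\Omega_Z$ via the pairing and the trivialization of $\omega_Z$, and then use Hodge--de Rham degeneration to rewrite the target as $\hom(\fil^2,\gr^1)$. The paper instead argues crystallinely: a deformation of $Z$ to a PD nilpotent extension $A$ of $k$ is the same as a lift of $\fil^2 H^2_\cris(Z)(k)$ to an isotropic direct summand of $H^2_\cris(Z)(A)$, and the tangent-space description drops out of the fact that $(\fil^2)^\perp=\fil^1$ inside the crystal. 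Both proofs are valid, and yours is arguably more elementary; but the crystalline formulation is not an idle choice here, since it is exactly the mechanism reused in Proposition \ref{propdeformlattice} and Lemma \ref{lemdeformaut} (a line bundle extends to a deformation iff its crystalline Chern class is orthogonal to the lifted $\fil^2$), so your route would still leave that machinery to be set up separately. Two small points to tighten: first, $\fil^2 H^2_\derham(Z/k)$ is $H^0(Z,\Omega^2_{Z/k})=H^0(Z,\omega_Z)$, not $H^2(Z,\omega_Z)$ (the latter is $\gr^0$ up to the trivialization of $\omega_Z$), and $H^0(Z,\omega_Z)$ is \emph{not} canonically $k$ --- so the clean way to get canonicity is to use the contraction pairing $H^1(Z,\calt_Z)\otimes H^0(Z,\Omega^2)\to H^1(Z,\Omega^1)$ directly, which packages the dual of $\fil^2$ into the $\hom$ and makes all trivializations cancel. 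Second, when invoking Hodge--de Rham degeneration in characteristic $p$ you also need $H^0(Z,\Omega^1_{Z/k})=0$ (Rudakov--Shafarevich), not just $H^1(Z,\calo_Z)=0$, to pin down the Hodge numbers; this is standard but worth citing.
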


\begin{proof}[Sketch] See \cite[2.4]{delignelift} \cite[5.2]{nygaardogus}, \cite[5.1]{ogusicm}.  Briefly, let $A$ be a
  nilpotent extension of $k$ with a divided power structure.  The
  intersection pairing $(\cdot.\cdot)$ extends to a pairing on the
  crystal $H^2_\cris(Z)$. To give a  deformation of $Z$ to $A$ is to
  lift $\fil^2 H^2_\cris(Z)(k)$ to an isotropic  direct summand of
  $H^2_\cris(Z)(A)$.  Now use the fact \cite[(2.3.7)]{delignelift} that the
  orthogonal complement to $\fil^2 H^2_\cris(Z)(k)$ is $\fil^1 H^2_\cris(Z)(k)$.
\end{proof}

If $2d$ is invertible in $k$ and $(Z,\lambda) \in \mk_{2d}(k)$, then $\defo(Z,\lambda) \subset \defo(Z)$
is prorepresentable, and formally smooth of dimension $19$ over $\spf
W$ \cite[1.5 and 1.6]{delignelift}, \cite[3.8]{perak3}, \cite[4.1.3]{rizovmoduli}.  More generally, we have:

\begin{proposition}
\label{propdeformlattice}
Let $L$ be a lattice of rank $r$ and  discriminant $\Delta_L$, and suppose that $\Delta_L$ is invertible in $k$.  Let $(Z/k,\alpha)\in \mk_L(k)$ be an $L$-polarized K3 surface.  Then $\defo(Z,\alpha)$
is prorepresentable and formally smooth of dimension $20 - r$
over $\spf W$.
\end{proposition}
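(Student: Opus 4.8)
The plan is to reduce the statement to the tangent-space computation of Lemma \ref{lemtangent}, together with an analysis of how the $L$-polarization cuts out a subspace of the full 20-dimensional deformation space. Concretely, I would argue as follows. Let $s$ be the base point of $\defo(Z)$, and recall from Lemma \ref{lemtangent} the canonical identification
\[
\ts_s\defo(Z) \iso \hom(\fil^2 H^2_\derham(Z/k), \gr^1 H^2_\derham(Z/k)).
\]
The deformation functor $\defo(Z,\alpha)$ is the subfunctor of $\defo(Z)$ consisting of those deformations along which every class in $\alpha(L) \subset \pic_{Z/k}(k)$ lifts to a line bundle. By the Grothendieck–Messing / crystalline description of deformations used in the sketch of Lemma \ref{lemtangent}, a deformation over a divided-power thickening $A$ of $k$ corresponds to a lift of $\fil^2 H^2_\cris(Z)(k)$ to an isotropic direct summand of $H^2_\cris(Z)(A)$, and the first-order obstruction to lifting a line-bundle class $v$ is the pairing of $v$ (viewed in $\gr^1$) against the deformation parameter in $\hom(\fil^2,\gr^1)$. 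Thus, at the level of tangent spaces, $\ts_s\defo(Z,\alpha)$ is the annihilator of the image of $\alpha(L)$ in $\gr^1 H^2_\derham(Z/k)$, i.e.
\[
\ts_s\defo(Z,\alpha) = \hom\bigl(\fil^2 H^2_\derham(Z/k),\ (\alpha(L)\tensor k)^{\perp}/\fil^2\bigr),
\]
where the orthogonal complement is taken inside $H^2_\derham$. Since $\fil^2$ has rank $1$, this has dimension equal to $\operatorname{rank}\bigl((\alpha(L)\tensor k)^\perp\bigr) - 1$.

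The next step is the lattice bookkeeping: because $\Delta_L$ is invertible in $k$, the pairing restricted to $\alpha(L)\tensor k$ is nondegenerate, so $H^2_\derham(Z/k) = (\alpha(L)\tensor k) \oplus (\alpha(L)\tensor k)^\perp$ and hence $(\alpha(L)\tensor k)^\perp$ has rank $22 - r$. Moreover, $\fil^2 H^2_\derham$ is orthogonal to all of $\ns(Z)\tensor k \supseteq \alpha(L)\tensor k$ (the Hodge/crystalline $(2,0)$-part pairs trivially with algebraic classes — this is where one invokes that $\alpha(L)$ consists of line bundles, e.g. via the first Chern class landing in $\fil^1$), so $\fil^2 \subset (\alpha(L)\tensor k)^\perp$ and the displayed dimension count gives $\dim \ts_s\defo(Z,\alpha) = (22-r) - 1 = 20 - r$.

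It then remains to promote this tangent-space count to formal smoothness of the prescribed dimension. For this I would use the same divided-power/crystalline deformation-theoretic setup as in Lemma \ref{lemtangent}: lifting $\fil^2 H^2_\cris(Z)$ isotropically is unobstructed (the relevant $H^2$ of the truncated de Rham complex vanishes, as in \cite{delignelift}, \cite{ogusicm}), and imposing the algebraicity of the classes in $\alpha(L)$ is a closed condition cut out by equations whose differentials are exactly the linear functionals appearing above — these are linearly independent precisely because $\alpha(L)\tensor k$ is nondegenerate, again using $\Delta_L \in k^\times$. Hence $\defo(Z,\alpha)$ is cut out inside the smooth formal scheme $\defo(Z) \iso \spf W\powser{t_1,\dots,t_{20}}$ by $r$ equations forming part of a regular system of parameters, so it is itself formally smooth over $\spf W$ of relative dimension $20-r$; prorepresentability is inherited as in the polarized case \cite[1.5--1.6]{delignelift}, \cite[4.1.3]{rizovmoduli}.

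The main obstacle I anticipate is not the dimension count but making the ``deformation preserving a line-bundle class is unobstructed, with the expected tangent directions'' claim precise in mixed characteristic for a whole sublattice at once — i.e. checking that the $r$ cutting equations really do have independent differentials and really do cut out a formally smooth (not merely flat of the right dimension) subfunctor. This is exactly the point at which the hypothesis $\Delta_L$ invertible in $k$ is essential: it guarantees the pairing on $\alpha(L)\tensor k$ is perfect, so that $\alpha(L)\tensor k$ is a direct summand of $H^2_\derham(Z/k)$ compatible with the Hodge/conjugate filtrations, and the lifting problem decouples orthogonally. Once that orthogonal decoupling is in hand, everything reduces to the already-known unobstructedness statements for the polarized case applied to the quadratic lattice $(\alpha(L)\tensor k)^\perp$ of rank $22-r$.
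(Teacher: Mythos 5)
Your strategy is essentially the paper's: identify $\ts_s\defo(Z,\alpha)$ inside $\ts_s\defo(Z)\iso\hom(\fil^2 H^2_\derham(Z/k),\gr^1 H^2_\derham(Z/k))$ as the annihilator of the crystalline Chern classes of $\alpha(L)$, use the invertibility of $\Delta_L$ to see those classes are linearly independent over $k$, and conclude smoothness because $\defo(Z,\alpha)$ is cut out of the $20$-dimensional smooth $\defo(Z)$ by $r$ equations (one per basis vector of $L$, via Deligne) whose differentials are exactly those independent functionals. The paper phrases the last step as ``codimension at most $r$ by Krull, plus tangent space of dimension exactly $20-r$, forces regularity,'' which is equivalent to your ``independent differentials give part of a regular system of parameters.''

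However, your displayed tangent-space formula is off by one, and the arithmetic $(22-r)-1=20-r$ is false. A first-order deformation of the isotropic line $\fil^2$ takes values in $(\fil^2)^\perp/\fil^2=\fil^1/\fil^2=\gr^1$ --- this is exactly where the fact $\fil^1=(\fil^2)^\perp$ (recorded in the sketch of Lemma \ref{lemtangent} via \cite[(2.3.7)]{delignelift}) enters --- so the correct target is $\bigl((\alpha(L)\tensor k)^\perp\cap\fil^1\bigr)/\fil^2$, which has dimension $\bigl(22-(r+1)\bigr)-1+1=20-r$; your target $(\alpha(L)\tensor k)^\perp/\fil^2$ has dimension $21-r$ and is not even naturally a subquotient of $\gr^1$. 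Your prose description (``annihilator of the image of $\alpha(L)$ in $\gr^1$'') is the correct statement and does give $20-r$, since the images of the $r$ Chern classes in the $20$-dimensional $\gr^1$ remain independent and span a nondegenerate subspace; the displayed formula just needs to be repaired to match it. A second, smaller point: you implicitly use that $\alpha(L)\tensor k\to H^2_\derham(Z/k)$ is injective, which is not a statement about the abstract lattice alone --- it requires injectivity of the mod-$p$ Chern class $\ns(Z)/p\ns(Z)\inject H^2_\derham(Z/k)$, resting on torsion-freeness of $H^2_\cris$ and Hodge--de Rham degeneration (\cite[Rem.~3.5]{delignelift}), combined with the $\Delta_L$-invertibility to get independence of the $\alpha(e_i)$ mod $p$. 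With these two repairs your argument agrees with the paper's.
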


\begin{proof}
As in \ref{subsecstackk3},
let $e_1, \cdots, e_r$ be a $\integ$-basis for $L$, and let $\call_i =
\alpha(e_i) \in \pic(Z)$.  Then $\defo(Z,\alpha) = \defo(Z,\st{\call_1,
  \cdots, \call_r})$ is the largest formal subscheme of $\defo(Z)$ to
which each of the line bundles $\call_i$ extends.  Thus,
$\defo(Z,\alpha)$ is the scheme theoretic intersection of the
$\defo(Z,\call_i)$.  Now, each $\defo(Z,\call_i)$ is the vanishing
locus in $\defo(Z)$ of a single function $f_i$ \cite[1.5]{delignelift}.  So
(any component of) $\defo(Z,\alpha)$ has codimension at most $r$ in
$\defo(Z,\alpha)$, and it suffices to show that the dimension of the
tangent space of $\defo(Z,\alpha)$ at the base point is exactly
$20-r$.

Since $Z$ is smooth and proper, there is a crystalline Chern class map
$c_1: \ns(Z) \ra H^2_\cris(Z/W)$.  Moreover, since the crystalline
cohomology of $Z$ is torsion-free and the Hodge to deRham spectral
sequence for $H\udot(Z/W)$ degenerates at $E_1$, the Chern class map
yields an inclusion $\bar c_1:\ns(Z)/p\ns(Z) \inject H^2_\cris(Z/k)
\iso H^2_\derham(Z/k)$ \cite[Rem.\ 3.5]{delignelift}.

Let $\bar c_1(\alpha) \subset H^ 2_\derham(Z/k)$ be the span of
$\bar c_1(\alpha(e_1)), \cdots, \bar c_1(\alpha(e_r))$; it is actually
a subspace of $\fil^1 H^ 2_\derham(Z/k)$ (e.g., \cite[3.4]{perak3}).

Since $\Delta_L$ is invertible in $k$, $\call_1, \cdots, \call_r$ are
linearly independent in $\ns(Z)/p\ns(Z)$, and thus $\dim \bar c_1(\alpha) =r$.
Now use the fact (e.g., \cite[Thm.\ 3.8(3)]{perak3}, modelled
after \cite[5.1.2]{ogusicm}) that a line bundle $\call$
extends to a given deformation $\til Z/A$ if and only if $c_1(\call)$
is orthogonal to the corresponding lift $\fil^2 H^2_\cris(\til
Z)(A)$.
Under the isomorphism of Lemma \ref{lemtangent}, we see that
the tangent space $\ts_s\defo(Z,\alpha)$ corresponds to homomorphisms
from $\fil^2 H^2_\derham(Z/k)$ into the orthogonal complement of
$\bar c_1(\call_1), \cdots, \bar c_1(\call_r)$ in
$\gr^1H^2_\derham(Z/k)$.  Because these Chern classes are linearly
independent over $k$ and $(\cdot.\cdot)$ is nondegenerate, the
codimension of $\ts_s\defo(Z,\alpha)$ in $T_s\defo(Z)$ is $r$.
\end{proof}

We now suppose that data $\ubar\chi = (\mmu_n,\chi^\omega,\chi)$ is chosen so that $\mk_{L,\ubar\chi}$ is nonempty, and further assume that $n$ is invertible in $k$.

\begin{lemma}
\label{lemdeformaut}
Suppose $(Z, \alpha, \rho) \in \mk_{L,\ubar\chi}(k)$ and
$\operatorname{char}(k)\nmid 2\Delta_Ln$.  The (equicharacteristic) tangent space to $\mk_{L,\ubar\chi}$ at $(Z,\alpha,\rho)$ has dimension $m(\chi^\omega)-1$.
\end{lemma}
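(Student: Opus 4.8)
The plan is to imitate the proof of Proposition \ref{propdeformlattice}, now carrying along the $\mmu_n$-action, and to exploit the rigidity of representations of $\mmu_n$ on $\calo$-modules when $n$ is invertible. First I would identify $\defo(Z,\alpha,\rho)$ inside $\defo(Z,\alpha)$: a deformation of the pair $(Z,\alpha)$ to a divided-power thickening $A$ of $k$ corresponds, under Lemma \ref{lemtangent} and Proposition \ref{propdeformlattice}, to lifting $\fil^2 H^2_\cris(Z)(k)$ to an isotropic direct summand of $H^2_\cris(Z)(A)$ that is orthogonal to $\bar c_1(\alpha)$; such a deformation carries the $\mmu_n$-action iff the lifted filtration is $\mmu_n$-stable. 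Because $n$ is invertible in $k$, the representation of $\mmu_n$ on $H^2_\cris(Z)(A)$ is the unique lift of its reduction, so $H^2_\cris(Z)(A)$ decomposes canonically into character eigenspaces $H^2_\cris(Z)(A)_\psi$ refining the decomposition of $H^2_\cris(Z)(k)$, and the pairing restricts to a perfect pairing between the $\psi$ and $\psi^{-1}$ eigenspaces. A $\mmu_n$-stable lift of $\fil^2$ is then the same datum as, for each character $\psi$, a lift of $(\fil^2)_\psi$ inside $H^2_\cris(Z)(A)_\psi$; the isotropy and orthogonality conditions are bilinear and respect the eigenspace decomposition.

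Next I would translate this into a tangent-space computation. The $\mmu_n$ acts on $\calh^{2,0}(Z/k) = \fil^2 H^2_\derham(Z/k)$ through the single character $\chi^\omega$, so $\fil^2$ lives entirely in the $\chi^\omega$-eigenspace, and deforming it $\mmu_n$-equivariantly means deforming it inside $(\gr^1 H^2_\derham(Z/k))_{\chi^\omega}$. By Lemma \ref{lemtangent} and the argument of Proposition \ref{propdeformlattice}, the equivariant tangent space is
\[
\hom\bigl(\fil^2 H^2_\derham(Z/k),\ (\gr^1 H^2_\derham(Z/k))_{\chi^\omega} \cap \bar c_1(\alpha)^\perp\bigr).
\]
Since $\bar c_1(\alpha)$ spans an $r$-dimensional subspace of $\fil^1$ on which $\mmu_n$ acts trivially — indeed $\alpha(L)\subset\pic(Z)$ is fixed by $\rho$ — its pairing with the $\chi^\omega$-eigenspace of $\gr^1$ is identically zero whenever $\chi^\omega\neq\chi_0$, i.e. whenever the action is nontrivial, which is forced by purely non-symplecticity. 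Hence $\bar c_1(\alpha)^\perp$ contains all of $(\gr^1)_{\chi^\omega}$, and the tangent space is simply $\hom(\fil^2, (\gr^1 H^2_\derham(Z/k))_{\chi^\omega})$, which has dimension equal to $\dim_k (\gr^1 H^2_\derham(Z/k))_{\chi^\omega}$. The defining conditions on $\mk_{L,\ubar\chi}$ say that $\mmu_n$ acts on $H^2(Z/k)$ via $\chi$ with $m_\chi(\chi^\omega)$ the multiplicity of $\chi^\omega$; since $\fil^2$ and its complementary $\chi^\omega$-eigenline in $\gr^0$ account for two copies of $\chi^\omega$ (by the perfect pairing between $\chi^\omega$ and $(\chi^\omega)^{-1}$ eigenspaces, together with $\chi^\omega\neq(\chi^\omega)^{-1}$ when $n\geq 3$, which holds here), the $\chi^\omega$-multiplicity in $\gr^1$ is $m(\chi^\omega)-2$... — wait, I need to recount: $\fil^2$ contributes one, $\gr^1$ contributes $m(\chi^\omega)-2$ if $\gr^2$ also contributes one, giving $\dim(\gr^1)_{\chi^\omega} = m(\chi^\omega)-2$; but the expected answer is $m(\chi^\omega)-1$, so in fact $\gr^2 H^2_\derham$ must carry $(\chi^\omega)^{-1}$, not $\chi^\omega$, so $\gr^1$ carries $m(\chi^\omega)-1$ copies of $\chi^\omega$. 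This bookkeeping — pinning down exactly which eigenspace $\gr^0 = \fil^2$ pairs against, and hence the multiplicity of $\chi^\omega$ in $\gr^1$ — is the one genuinely delicate point; everything else is a direct transcription of Proposition \ref{propdeformlattice}.

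So the key steps, in order, are: (i) use rigidity of $\mmu_n$-representations to decompose $H^2_\cris(Z)(A)$ into eigenspaces compatibly with the filtration and the pairing; (ii) identify $\defo(Z,\alpha,\rho)$ as the $\mmu_n$-fixed locus in $\defo(Z,\alpha)$ and its tangent space with $\mmu_n$-equivariant homomorphisms; (iii) observe that the Chern classes of $\alpha(L)$ lie in the trivial eigenspace, so the orthogonality constraint from Proposition \ref{propdeformlattice} drops out on the $\chi^\omega$-eigenspace; (iv) compute $\dim_k(\gr^1 H^2_\derham(Z/k))_{\chi^\omega} = m(\chi^\omega)-1$ from the Hodge decomposition of $H^2$ into eigenspaces and the self-duality forcing $\gr^0$ and $\gr^2$ to carry dual characters. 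The main obstacle is step (iv): one must use the signature condition on $L^\perp\tensor\real$ and the structure of the Hodge decomposition to be sure the holomorphic $2$-form's character $\chi^\omega$ appears in $\gr^1$ with the right multiplicity, ruling out the off-by-one that the naive count above flirts with; the cleanest route is probably to note that complex conjugation (or the $k$-linear analogue via the pairing) swaps $H^{2,0}$ and $H^{0,2}$ and hence the characters $\chi^\omega$ and $(\chi^\omega)^{-1}$, so $\gr^0$ and $\gr^2$ together contribute one $\chi^\omega$ and one $(\chi^\omega)^{-1}$, leaving $m(\chi^\omega)-1$ for $\gr^1$.
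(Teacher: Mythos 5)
Your proposal follows essentially the same route as the paper: reduce via the crystalline local Torelli theorem to $\mmu_n$-equivariant lifts of the Hodge filtration, identify the tangent space with $\hom_G(\fil^2 H^2_\derham(Z/k),\ \bar c_1(\alpha)^\perp/\fil^2)$, observe that orthogonality to $\bar c_1(\alpha)$ is automatic on the $\chi^\omega$-eigenspace because the Chern classes carry the trivial character, and count the multiplicity of $\chi^\omega$ in $\gr^1$. Your step (iv) --- pinning down that $\gr^0$ carries $(\chi^\omega)^{-1}$ rather than $\chi^\omega$, which uses $n\ge 3$ --- is precisely the bookkeeping the paper's final sentence leaves implicit, and you resolve it correctly.
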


\begin{proof}
By the crystalline local Torelli theorem
(\cite[Rem.~3.23]{berthelotogusI} and \cite[Lemma 3.1]{jang17}; see
also \cite[Thm.~5.3 and
Rem.~(5.3.1)]{nygaardogus} and \cite[5.1.2]{ogusicm} for
characteristic at least $5$), it suffices to
identify the sublocus of $\defo(Z,\alpha)$ to which the $G$-action on
$H^2_\cris(Z)$ extends.
Thus,  let $\bar c_1(\alpha)^\perp$ be the orthogonal complement of $\bar c_1(\alpha) $, and consider the inclusions of formal deformation spaces $\defo(Z,\alpha,\rho)\subset
\defo(Z,\alpha) \subset \defo(Z)$.  Computing equicharacteristic tangent spaces at the
base point $s$, we have
\[
\xymatrix{
\ts_s \defo(Z) & \hom(\fil^2 H^2_\derham(Z/k),\gr^1
H^2_\derham(Z/k)) \ar[l]^-\twiddle \\
\ts_s \defo(Z,\alpha) \ar@{^{(}->}[u] & \hom( \fil^2 H^2_\derham(Z/k),
c_1(\alpha)^\perp/\fil^2H^2_\derham(Z/k))\ar[l]^-\twiddle \ar@{^{(}->}[u]\\
\ts_s \defo(Z,\alpha,\rho) \ar@{^{(}->}[u] & \hom_G( \fil^2 H^2_\derham(Z/k),
c_1(\alpha)^\perp/\fil^2 H^2_\derham(Z/k)) \ar[l]^-\twiddle \ar@{^{(}->}[u]\\
}
\]
By definition of $\mk_{L,\ubar\chi}$, the $\chi$-eigenspace of $\fil^1 H^2_\derham(Z/k)$ is fully contained in $c_1(\alpha)^\perp$; the result now follows.
\end{proof}

\section{Period maps for complex K3 surfaces}
\label{seccxperiod}

\subsection{Period maps}

The global complex Torelli theorem for K3 surfaces asserts that
the isomorphism class of a K3 surface $Z/\cx$ is determined by the
isomorphism class of $H^2(Z, \integ)$ as a polarized Hodge structure.
Via Hodge theory, one thus obtains a good global understanding of the
moduli of complex K3 surfaces, as follows.

Let $Z$ be a marked K3 surface, i.e., a K3 surface $Z/\cx$
equipped with an isometry $\phi: H^2(Z,\integ)
\sriso \lk$.  The image of $\phi_\cx(H^{2,0}(Z))$ determines an
element of the period domain
\[
\xx_{\lk} = \st{[\sigma] \in \proj(\lk \tensor \cx):
  (\sigma,\sigma) = 0, (\sigma,\bar\sigma) >0},
\]
and the isomorphism class of $Z$ is determined by the class of
$\phi_\cx(H^{2,0}(Z))$ in $\orth(\lk) \bs \xx_{\lk}$.  (We recall a
useful description of $\xx_{\lk}$ below in \ref{subsecperioddomain}.)
We
remind the reader that the action of the orthogonal group $\orth(\lk)$
on $\xx_{\lk}$ is not properly discontinuous, and thus the quotient
space $\orth(\lk)\bs \xx_{\lk}$ is not even Hausdorff.

Now suppose that $Z$ is equipped with a polarization $\lambda$ of
degree $2d$.  Recall that we have fixed an embedding $\iota:\l2d
\inject \lk$.  A marking $\phi$ of $Z$ induces an identification of
the primitive cohomology $P^2_\lambda(Z,\integ)$ with $\l2d^\perp
\subset \lk$, and thus $\phi_\cx(H^{2,0}(Z))$ lies in
\[
\xx^{\l2d} := \xx_{\l2d^\perp} = \st{[\sigma] \in \proj(\l2d^\perp \tensor \cx):
  (\sigma,\sigma) = 0, (\sigma,\bar\sigma) >0} \subset \xx_{\lk}.
\]

Recall (Section \ref{sec:lattice}) that $\til \orth^{\l2d}(\integ)$ consists
of those orthogonal automorphisms of $\l2d^\perp$ which admit an
extension to $\lk$ fixing $\ang{2d}$.  We thus have
a natural inclusion
\[
\xymatrix{
\til \orth^{\l2d}(\integ) \backslash
\xx^{\l2d} \ar@{^{(}->}[r] & \orth_\lk(\integ) \backslash \xx_{\lk}.
}\]
The strong Torelli theorem for polarized K3 surfaces implies that
there is an open immersion
\[
\xymatrix{
\mk^\circ_{2d,\cx} \ar@{^{(}->}[r]^-{\tau_{2d,\cx}} & \til \orth^{\l2d}(\integ) \backslash
 \xx^{\l2d}.
}
\]
 e.g., \cite[Thm.~6.3.4]{huybrechtsk3} which extends to an isomorphism
 of coarse moduli spaces
 $\mk_{2d,\cx} \to \til\orth^{\l2d}(\integ)\backslash \xx^{\l2d}$ \cite[Rem.~6.4.5]{huybrechtsk3}.

More generally, for a primitive sublattice $L\subset \lk$ of signature
$(1,r-1)$, we set
\begin{align*}
\xx^L = \xx_{L^\perp} &= \st{ [\sigma] \in
  \proj(L^\perp\tensor\cx) : (\sigma,\sigma) = 0,
  (\sigma,\bar\sigma)>0} \\
\end{align*}
and obtain an open immersion
\[
\xymatrix{
\mk_{L,\cx} \ar@{^{(}->}[r]^-{\tau_{L,\cx}} & \til \orth^L(\integ) \backslash
\xx^L;
}
\]
see \cite[\S 3]{dolgachev_lattice} and \cite[\S 11]{dolgachevkondo} for more details.

Finally, fix data $\ubar\chi = (\mmu_n,\chi^\omega,\chi)$ and suppose
that $(Z,\alpha,\rho) \in \mk_{L,\ubar\chi}(\cx)$.  A choice of
marking $\phi$ on $Z$ induces an action of
$\mmu_n$ on $\lk$ with character $\chi$.  The period point
$\phi_\cx(H^{2,0}(Z))$ then lies in
\begin{align*}
\xx^{L,\ubar\chi} &= \st{ [\sigma] \in
  \proj((L^\perp_\cx)(\chi^\omega)): (\sigma,\sigma) = 0, (\sigma,
  \bar\sigma)>0}
\\
\intertext{where we single out an eigenspace for the action of
  $\mmu_n$ on $L^\perp$ by}
L^\perp_\cx(\chi^\omega)&= \st{ v \in L^\perp\tensor\cx:
  \forall \zeta\in \mmu_n(\cx), \zeta v = \chi^\omega(\zeta)v}.
\end{align*}
Let $\orth^{L,\ubar\chi}$ be the group of automorphisms of $L^\perp$
which commute with the action of $\mmu_n$; if $R$ is a ring over
$\integ[\zeta_n]$, then
\[
\orth^{L,\ubar\chi}(R) = \st{ g \in \orth^L(R) : \forall \zeta \in
  \mmu_n(R), g\zeta v = \zeta g
  v}.
\]
Let $\til \orth^{L,\ubar\chi}$ be the subgroup of admissible
automorphisms of $L^\perp$.
Then we again have an open immersion
\[
\xymatrix{
\mk_{L,\ubar\chi,\cx}  \ar@{^{(}->}[r] &
\til\orth^{L,\ubar\chi}(\integ) \backslash \xx^{L,\ubar \chi}.
}
\]

Recall that $m_{\ubar\chi}(\chi^\omega) = m(\chi^\omega) = \dim L^\perp_\cx(\chi^\omega)$
is the multiplicity of the faithful character $\chi^\omega$ in the
representation $\chi$, and that $L \subset \lk$ is the module of
$\mmu_n$-invariants.

If $n \ge 3$, then $\chi^\omega$ is imaginary, and
\[
\xx^{L,\ubar\chi} \iso \ball^{m_{\ubar\chi}(\chi^\omega)-1},
\]
the complex unit ball of dimension $m_{\ubar\chi}(\chi^\omega)-1$.

If $n = 2$, then $\chi^\omega$ is real; $L^\perp_\cx(\chi^\omega) =
L^\perp_\real(\chi^\omega)\tensor\cx$; and $\xx^{L,\ubar\chi}$
is a type IV Hermitian symmetric space of dimension
$m_{\ubar\chi}(\chi^\omega)-1$.

\subsection{Period spaces}
\label{subsecperioddomain}

Since $L$ has signature $(1,r-1)$, $L^\perp$ has signature
$(2,19-(r-1))$.  It is traditional in the K3 literature to describe
the relevant period space as
\begin{align*}
\xx^L = \xx_{L^\perp} &\iso \frac{\orth^L(\real)}{\sorth_2(\real)\times
  \orth_{20-r}(\real) }
\intertext{To facilitate comparison with the Shimura variety
  literature, we prefer to recall that the special orthogonal group
  $\sorth^L(\real)$ already acts transitively on $\xx^L$, and we in fact
  have}
\xx^L  &\iso \frac{\sorth^L(\real)}{\sorth_2(\real)\times \sorth_{20-r}(\real)}.
\end{align*}
It is perhaps worth noting that the special orthogonal group of a
definite form is connected, while $\sorth_{2,20-r}(\real)$ has two
topological components, indexed by the two classes of the spinor
norm.  In particular, $\xx^L$ consists of two connected components,
say $\xx^{L+}$ and $\xx^{L-}$;
these components are stabilized by the component $\sorth^L(\real)^+
\subset \sorth^L(\real)$ of elements with trivial spinor norm.

Let $\Gamma \subset \orth^L(\real)$ be any arithmetic group.  Then
$\Gamma$ has finite covolume, and in particular meets every
topological component of $\orth^L(\real)$.  We have isomorphisms of
complex analytic spaces
\[
\Gamma \backslash\xx^L \iso (\Gamma \cap \orth^L(\real)^+)
\backslash\xx^{L+} \iso (\Gamma \cap \sorth^L(\real))\backslash
\xx^{L}.
\]
In particular, the period map is an open immersion
\[
\xymatrix{
\mk_{L,\cx} \ar@{^{(}->}[r]^-{\tau_L} &
\til\sorth^L(\integ)\backslash \xx^L.
}
\]

\section{Shimura varieties}

\subsection{Integral canonical models}
\label{SS:shimintegral}

We review some basic concepts concerning Shimura varieties, referring
the reader to \cite{deligne_travaux} for foundational material,
\cite{deligne_corvallis} for canonical models, \cite{kisin_intcan} for
integral canonical models, and \cite{taelman17} for stack-theoretic
issues.  All Shimura data are assumed to be of abelian type, so that
the cited references suffice.

Let $(G,\xx)$ be a Shimura datum, consisting of a reductive group
$G/\rat$ and a conjugacy class $\xx$ of homomorphisms
$\res_{\cx/\real}\gp_m \ra G_\real$ of $\real$-groups, subject to the
usual axioms. Further assume that $(G,\xx)$ is of abelian type.

Let $\kk \subset G(\aff_f)$ be a neat compact open subgroup of the finite
adelic points.  The holomorphic analytic quotient stack
$\shim^{\mathrm{an}}_\kk[G,\xx] := [G(\rat)\backslash(\xx\times G(\aff_f)/\kk)]$
is represented by the analytification  of a
smooth complex quasiprojective variety $\shim_\kk(G,\xx)$. The variety
$\shim_\kk(G,\xx)$ and the stack $\shim_\kk[G,\xx]$ both
descend to the reflex field $E(G,\xx)$.

More generally, let $\kk \subset G(\aff_f)$ be an open compact
subgroup, and let $\kk_0 \subset \kk$ be a neat subgroup of finite
index.  Define the corresponding Shimura stack by $\shim_\kk =
[\shim_{\kk_0}[G,\xx]/(\kk/\kk_0)]$; it is independent of the choice
of $\kk_0$.

Fix a prime $p$, let $v$ be a prime of $E(G,\xx)$ lying over $p$, and
let $\kk_p \subset G(\rat_p)$ be hyperspecial. Then the pro-variety
\[
\shim_{\kk_p} := \invlim{{\kk}^p\subset G(\aff^p_f)} \shim_{\kk_p {\kk}^p}(G,\xx)
\]
admits an  extension to $\calo_{E(G,\xx),v}$ as a pro-scheme
with continuous $G(\aff^p_f)$-action, which we continue to denote $\shim_{\kk_p}$.   What makes this model the
\emph{integral canonical model} is the following extension property:
If $T$ is any regular, formally smooth (pro\nobreakdash-)scheme over
$\calo_{E(G,\xx),v}$, then any morphism $T_{E(G,\xx)} \to \shim_{\kk_p}$
extends to all of $T$ (e.g., \cite[\S (2.3.7)]{kisin_intcan}.

Consequently, for any $\kk
\subset G(\aff_f)$ hyperspecial at $p$, $\shim_\kk[G,\xx]$ extends
canonically to a smooth Deligne-Mumford stack over
$\calo_{E(G,\xx),v}$.   (If necessary, one can start with the
canonical integral model of $\shim_{\kk_0}[G,\xx]$ for some neat
compact open subgroup $\kk_0 \subset \kk$, and then pass to the
quotient by the action of $\kk/\kk_0$.)

In fact, let $\kk \subset G(\aff_f)$ be an open compact subgroup, and
let $M = M(\kk)$ be the (finite) product of all primes $p$ such that the
component $\kk_p$ is \emph{not} hyperspecial.  Using
\cite[Thm.~2.2.1]{lovering17}, we see that $\shim_\kk[G,\xx]$
admits a canonical integral model over $\calo_{E(G,\xx)}[1/M]$.

A morphism $f:(G_1,\xx_1) \ra (G_2,\xx_2)$ of Shimura data is a morphism $G_1 \ra G_2$
of algebraic groups which induces a morphism $\xx_1 \ra \xx_2$.  For
future use, we collect some standard functorialities for morphisms of
Shimura varieties.

\begin{lemma}
\label{lemshimmorphisms}
Let $f:(G_1,\xx_1) \to (G_2,\xx_2)$ be a morphism of Shimura data.
Let $\kk_1 \subset G_1(\aff_f)$ and $\kk_2 \subset G_2(\aff_f)$ be
compact open subgroups such that $f(\kk_1) \subseteq f(\kk_2)$.
\begin{alphabetize}
\item Then $f$ induces a morphism $f_{\kk_1,\kk_2}:\shim_{\kk_1}[G_1,\xx_1] \to
  \shim_{\kk_2}[G_2,\xx_2]$ of Shimura stacks over $E$.
\item If $\kk_1$ and $\kk_2$ are hyperspecial at all $p\nmid M$, then
  $f_{\kk_1,\kk_2}$ extends to a morphism of Shimura stacks over
  $\calo_E[1/M]$.
\item If $f:G_1 \to G_2$ is injective, then $f_{\kk_1, \kk_2}$ is a
  closed morphism of Shimura stacks.  If $\kk_2$ is a sufficiently
  small compact open subgroup of $G_2(\aff_f)$ which contains $\kk_1$,
  then the generic fiber of $f_{\kk_1,\kk_2}$ is a closed embedding.
\end{alphabetize}
\end{lemma}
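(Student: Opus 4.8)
The plan is to build $f_{\kk_1,\kk_2}$ first analytically, then descend it using the functoriality that is built into the theory of canonical models, and finally extend it over the ring of integers by invoking the defining extension property of integral canonical models; part (c) will follow from the classical behaviour of morphisms of Shimura varieties induced by an injection of groups, together with the fact that finite morphisms and closed immersions survive the passage to integral canonical models. Write $E = E(G_1,\xx_1)$, and note that the map $\xx_1 \to \xx_2$ forces $E(G_2,\xx_2)\subseteq E$, so it is harmless to base change the (integral) canonical model of $\shim_{\kk_2}[G_2,\xx_2]$ to $\calo_E$. For (a): by definition of a morphism of Shimura data, $f$ carries $\xx_1$ into $\xx_2$ and $f(\kk_1)$ into $\kk_2$, giving an evident morphism of analytic quotient stacks $[G_1(\rat)\backslash(\xx_1\times G_1(\aff_f)/\kk_1)] \to [G_2(\rat)\backslash(\xx_2\times G_2(\aff_f)/\kk_2)]$. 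When $\kk_1,\kk_2$ are neat, one of Deligne's axioms for canonical models (valid in the abelian-type case, see \cite{deligne_corvallis}) is precisely that such a map descends to a morphism $\shim_{\kk_1}(G_1,\xx_1)\to\shim_{\kk_2}(G_2,\xx_2)$ of varieties over $E$. For general $\kk_1,\kk_2$, choose neat normal subgroups $\kk_{1,0}\trianglelefteq\kk_1$ and $\kk_{2,0}\trianglelefteq\kk_2$ with $f(\kk_{1,0})\subseteq\kk_{2,0}$; the descended morphism $\shim_{\kk_{1,0}}[G_1,\xx_1]\to\shim_{\kk_{2,0}}[G_2,\xx_2]$ is equivariant for the induced homomorphism $\kk_1/\kk_{1,0}\to\kk_2/\kk_{2,0}$, and passing to quotient stacks produces $f_{\kk_1,\kk_2}$, independently of the auxiliary choices.

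For (b): fix $p\nmid M$ and a prime $v\mid p$ of $E$, and let $\kk_{i,p}$ be the hyperspecial components. We must extend the generic-fibre morphism $\shim_{\kk_{1,p}}\to\shim_{\kk_{2,p}}$ across $\calo_{E,v}$. But the source pro-scheme $\shim_{\kk_{1,p}}$ is, by construction, regular and formally smooth over $\calo_{E,v}$, hence is exactly the kind of test object for which the extension property of the integral canonical model $\shim_{\kk_{2,p}}$ is stated (\cite[\S(2.3.7)]{kisin_intcan}); applying that property with $T=\shim_{\kk_{1,p}}$ yields the desired extension, and $G_1(\aff^p_f)$-equivariance is forced by the same uniqueness. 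Running this over the finitely many bad primes — equivalently, invoking \cite[Thm.~2.2.1]{lovering17} to obtain canonical models of both Shimura stacks over $\calo_E[1/M]$ and repeating the extension argument there, then passing to quotients by the finite groups $\kk_i/\kk_{i,0}$ as in (a) — gives $f_{\kk_1,\kk_2}$ over $\calo_E[1/M]$.

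For (c): if $f:G_1\to G_2$ is injective then $\xx_1\to\xx_2$ is injective and the induced map on finite-adelic double cosets has finite fibres, so $f_{\kk_1,\kk_2}$ is quasi-finite. Factor it as the finite transition morphism $\shim_{\kk_1}[G_1,\xx_1]\to\shim_{\kk_1'}[G_1,\xx_1]$, where $\kk_1'=\kk_2\cap G_1(\aff_f)\supseteq\kk_1$, followed by the morphism $\shim_{\kk_1'}[G_1,\xx_1]\to\shim_{\kk_2}[G_2,\xx_2]$; the latter is, by the classical analysis of morphisms coming from an injection of groups (Deligne, \cite{deligne_travaux}, \cite{deligne_corvallis}), finite in general, and a closed immersion once $\kk_2$ is sufficiently small — which gives the last assertion of the lemma, since then one may take $\kk_1'=\kk_1$. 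A composite of finite morphisms is finite, hence closed. Finally, closed immersions and finite morphisms between the separated integral canonical models remain, respectively, closed immersions and finite, so these conclusions persist over $\calo_E[1/M]$.

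The main obstacle is (b): the crux is recognizing that the source of the generic-fibre morphism is genuinely among the regular, formally smooth test schemes demanded by the extension property — which is exactly the regularity property that integral canonical models are built to possess — while keeping the reflex fields and the exceptional set $M$ for source and target aligned throughout. Parts (a) and (c) are then essentially bookkeeping on top of standard structural results for Shimura varieties.
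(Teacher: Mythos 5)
Your proposal is correct and follows essentially the same route as the paper: part (a) and the generic fiber of part (c) are reduced to Deligne's classical results on morphisms of canonical models, and part (b) is obtained by applying the extension property of the target's integral canonical model to the source pro-scheme, whose regularity and formal smoothness make it an admissible test object. The paper's own proof is just a terser version of this same argument, so no further comparison is needed.
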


\begin{proof}
Part (a) and (the generic fiber of) part (c) are due to Deligne
\cite[1.15]{deligne_travaux}; see also \cite[5.16 and
13.8]{milne_shimura}.  The extension to integral models follows from the
extension property and the smoothness of the
integral model of $\invlim{\kk_1} \shim_{\kk_1}[G_1,\xx_1]$.
\end{proof}

\subsection{Orthogonal Shimura varieties}
\label{subsecorthshim}

Fix a nondegenerate lattice $L$ of signature
$(2,n_-)$, and let $G_L = \sorth_{L\tensor\rat}$ be the associated
special orthogonal group.  Let $\xx_L$ be the corresponding Hermitian symmetric space (\S \ref{subsecperioddomain}).

Inside $G_L(\aff_f)$ we single out the admissible integral automorphisms:
\[
\kk_L := \ker G_L(\widehat\integ) \to \aut(\disc(L))(\widehat\integ).
\]
The local component at $p$, $\kk_{L,p}$, is hyperspecial if
$p\nmid \Delta_L$.  Consequently, we have an integral canonical
model
\[
\shim_L := \shim_{\kk_L}[G_L,\xx_L]
\]
over $\integ[1/2\Delta_L]$.  (By inverting $2$, we sidestep the intricacies of orthogonal groups and Shimura varieties in even characteristic.)

Note that $\kk \cap \sorth_L(\real)^+ = \til{\sorth}_L(\integ)$, and thus
\cite[2.1.2]{deligne_corvallis}
\begin{equation}
\label{eq:shimlcx}
\shim_{L,\cx} \iso \sorth_L(\integ)^+
\backslash \xx_L^+ \iso \til\sorth_L(\integ)\backslash \xx_L.
\end{equation}

If $\kk \subset \kk_L \subset G_L(\aff_f)$ is any compact open
subgroup, then
there is a surjection $\shim_{\kk}[G_L,\xx_L] \to
\shim_{L}$ of stacks over $\integ[1/2\Delta_LN(\kk)]$. In particular, let $\kk_{L,N}  = \ker(\kk_L \to
G_L(\integ/N))$, and let $\shim_{L,N} = \shim_{\kk_N}[G_L,\xx_L]$, a
stack over $\integ[1/2\Delta_LN]$.  There is a surjection $\shim_{L,N}
\to \shim_L$, with each geometric fiber a torsor under
$\kk_L/\kk_{L,N} \iso G_L(\integ/N)$.

Now fix a primitive embedding of lattices $L_1 \inject
L_2$, with respective signatures $(2,n_{1,-})$ and $(2,n_{2,-})$.

\begin{lemma}
\label{lemshimclosed}
There is a closed morphism
\[
\xymatrix{
\shim_{L_1} \ar[r]^{\psi_{L_1,L_2}} & \shim_{L_2}
}
\]
of Shimura stacks over $\integ[1/(2\Delta_{L_1}\Delta_{L_2})]$
whose generic fiber is a closed embedding.
\end{lemma}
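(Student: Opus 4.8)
The plan is to deduce Lemma \ref{lemshimclosed} from the general functoriality of morphisms of Shimura varieties (Lemma \ref{lemshimmorphisms}) together with the integral-model formalism of \S\ref{SS:shimintegral}. A primitive embedding of lattices $L_1 \inject L_2$ induces an injection of algebraic groups over $\rat$ by extending orthogonally: any $g \in \sorth_{L_1 \tensor \rat}$ acts on $L_2 \tensor \rat = (L_1 \tensor \rat) \oplus (L_1^{\perp_{L_2}} \tensor \rat)$ by $g$ on the first factor and the identity on the second, giving a closed immersion $G_{L_1} \inject G_{L_2}$. First I would check that this respects the Shimura data, i.e.\ that it carries $\xx_{L_1}$ into $\xx_{L_2}$: a point of $\xx_{L_1}$ is (the $\sorth_{L_1}(\real)$-conjugacy class of) a Hodge cocharacter $h: \res_{\cx/\real}\gp_m \ra G_{L_1,\real}$ cutting out an oriented positive-definite plane in $L_1 \tensor \real$, and composing with $G_{L_1} \inject G_{L_2}$ gives the analogous cocharacter for $L_2$, since a positive plane in $L_1 \tensor \real$ is \emph{a fortiori} a positive plane in $L_2 \tensor \real$ (the embedding being primitive and of signature $(2,\ast)$ on each side). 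Hence $(G_{L_1}, \xx_{L_1}) \ra (G_{L_2}, \xx_{L_2})$ is a morphism of Shimura data, and it is injective on groups.

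Next I would verify the compatibility of level structures needed to invoke Lemma \ref{lemshimmorphisms}. The relevant levels are $\kk_{L_i} = \ker(G_{L_i}(\widehat\integ) \to \aut(\disc(L_i))(\widehat\integ))$. An element of $\kk_{L_1}$ acts trivially on $\disc(L_1)$, hence by \cite[Lemma 2.6]{peraspin} extends to an automorphism of $L_1^{\perp_{L_2}}$-orthogonal... more directly: by the discriminant-form criterion recalled in \S\ref{sec:lattice}, an isometry of $L_1$ trivial on $\disc(L_1)$ extends to an isometry of any even overlattice containing $L_1$ primitively acting trivially on the complement; one checks the resulting element of $G_{L_2}(\widehat\integ)$ again acts trivially on $\disc(L_2)$, so that the embedding carries $\kk_{L_1}$ into $\kk_{L_2}$. (At worst one shrinks to a common deeper level and descends, exactly as in Lemma \ref{lemshimmorphisms}(c); the statement already allows inverting $2\Delta_{L_1}\Delta_{L_2}$, which covers the primes where either $\kk_{L_i}$ fails to be hyperspecial.) With $f(\kk_{L_1}) \subseteq \kk_{L_2}$ in hand, Lemma \ref{lemshimmorphisms}(a) produces the morphism $\psi_{L_1,L_2}: \shim_{L_1} \ra \shim_{L_2}$ over the reflex field, part (b) extends it to a morphism of smooth DM stacks over $\calo_E[1/M]$ — and since the special orthogonal Shimura data here have reflex field $\rat$ and $M \mid 2\Delta_{L_1}\Delta_{L_2}$, this is a morphism over $\integ[1/(2\Delta_{L_1}\Delta_{L_2})]$ — and part (c) gives that it is a closed morphism whose generic fiber is a closed embedding.

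The main obstacle I anticipate is not any single hard theorem but the bookkeeping of level structures and reflex fields: one must make sure that the admissible-level subgroups $\kk_{L_1}$ and $\kk_{L_2}$ are genuinely nested under the induced map (rather than merely up to conjugacy), that the set $M$ of bad primes in Lemma \ref{lemshimmorphisms}(b) is contained in the support of $2\Delta_{L_1}\Delta_{L_2}$, and that passing to a deeper common level and then descending (as in the proof of Lemma \ref{lemshimmorphisms}(c), via the extension property and smoothness of the integral canonical model) does not enlarge the ring over which everything is defined. All of these are routine given the setup in \S\ref{sec:lattice} and \S\ref{SS:shimintegral}, but they are where the argument actually has content; the group-theoretic and Hodge-theoretic inputs are immediate. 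Finally, closedness of $\psi_{L_1,L_2}$ integrally (not just on the generic fiber) follows because the source is proper over... more precisely, because $G_{L_1} \inject G_{L_2}$ is closed and one invokes Lemma \ref{lemshimmorphisms}(c) together with properness of the relevant finite quotients, exactly as in the cited references of Deligne and Milne.
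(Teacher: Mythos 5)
Your overall route is the same as the paper's: the primitive embedding $L_1 \inject L_2$ gives an injection $G_{L_1}\inject G_{L_2}$ of groups and an inclusion $\xx_{L_1}\inject\xx_{L_2}$ of domains, admissibility gives $\kk_{L_1}\inject\kk_{L_2}$, and Lemma \ref{lemshimmorphisms} then produces the morphism, its integral extension over $\integ[1/(2\Delta_{L_1}\Delta_{L_2})]$, and its closedness. All of that matches.

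The gap is in your final step. You deduce that the generic fiber is a closed embedding from Lemma \ref{lemshimmorphisms}(c), but that part of the lemma is only stated for $\kk_2$ a \emph{sufficiently small} compact open subgroup containing $\kk_1$. Here $\kk_2=\kk_{L_2}$ is a fixed, essentially maximal level (the full discriminant kernel in $G_{L_2}(\widehat\integ)$), so the hypothesis is not satisfied and the conclusion does not follow formally: at a fixed non-small level, two points of $\xx_{L_1}$ can a priori be identified in $\shim_{L_2}$ by an element of $\til\sorth_{L_2}(\integ)$ that does not come from $\til\sorth_{L_1}(\integ)$. The paper closes exactly this gap with a direct argument: using the uniformization $\shim_{L,\cx}\iso\til\sorth_L(\integ)\backslash\xx_L$ from \eqref{eq:shimlcx}, one checks that $\psi_{L_1,L_2,\cx}$ is injective because $\til\sorth_{L_1}(\integ)$ is precisely the group of determinant-one isometries of $L_1$ that lift to automorphisms of $L_2$ (this is where the admissibility condition does real work a second time), and injectivity of the closed morphism on $\cx$-points then yields that the generic fiber is a closed embedding. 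Your proposal needs this injectivity check, or some substitute for it; without it the ``closed embedding on the generic fiber'' clause is unproved. The rest of your bookkeeping (reflex field $\rat$, bad primes contained in $2\Delta_{L_1}\Delta_{L_2}$, the discriminant-form criterion for extending isometries) is correct and matches the paper.
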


\begin{proof}
The chosen embedding gives an inclusion $G_{L_1} \to G_{L_2}$ of
groups over $\rat$, which induces $\xx_{L_1} \inject \xx_{L_2}$.
Because of the admissibility condition, we have an inclusion
$\kk_{L_1} \inject \kk_{L_2}$, whence (Lemma \ref{lemshimmorphisms}) a
morphism $\psi_{L_1,L_2}: \shim_{L_1} \to
\shim_{L_2}$ over $\integ[1/(2\Delta_{L_1} \Delta_{L_2})]$. To
verify that $\psi_{L_1,L_2,\rat}$ is a closed embedding, it suffices
to check that $\psi_{L_1,L_2,\cx}$ is an inclusion.  This last
claim follows from the description \eqref{eq:shimlcx} and the fact
that $\til\sorth_{L_1}(\integ)$ consists of those orthogonal
transformations of determinant one which lift to automorphisms of
$L_2$.
\end{proof}

Similarly, for each positive integer $N$, there is a closed morphism  $\shim_{L_1,N} \to \shim_{L_2,N}$ whose generic fiber is a closed embedding.

\subsection{Unitary Shimura varieties}
\label{subsecunitaryshim}

Let $K$ be a quadratic imaginary field.  Let $L$ be a free
$\calo_K$-module of rank $r$, equipped with a nondegenerate Hermitian form
$h(\cdot,\cdot)$ of signature $(1,r-1)$.  Attached to this is a
Shimura datum $(G_{\calo_K,L},\xx_{\calo_K,L})$, where $G_{\calo_K,L} = \U(L,h)$ is the group of
$\calo_K$-linear automorphisms of $L$ which preserve $h$, and
$\xx_{\calo_K,L} \iso \ball^{r-1}$, the
unit complex ball of dimension $r-1$.  Let $\kk_{\calo_K,L}$ be the
stabilizer in $G_{\calo_K,L}(\aff_f)$ of $L$.  Let
$\shim_{\calo_K,L}
=\shim_{\kk_{\calo_K,L}}[G_{\calo_K,L},\xx_{\calo_K,L}]$; it's the
moduli space of abelian varieties of dimension $r$ equipped with an
action by $\calo_K$ of signature $(1,r-1)$ and a polarization
$\lambda$ with $\ker(\lambda) \iso \disc(L)$. (More precisely, the relevant Shimura datum is
$(\U(L\tensor \rat,h), \xx_{(L\tensor \rat,h)})$; the choice of
lattice $L$ inside the $\rat$-vector space $L\tensor\rat$ defines the
integral structure on $G_{\calo_K,L}(\aff^f)$.)

More generally, suppose $K$ is a CM field, with maximal totally real
subfield $K^+$, and again let $L$ be a free $\calo_K$-module of
rank $r$, equipped with a nondegenerate Hermitian form $h$.  The archimedean
signature of $(L, h)$ is determined by data
\begin{equation}
\label{eq:defsig}
\st{(m_\sigma,n_\sigma)}_{\sigma:K^+\inject \real}.
\end{equation}
Let $G_{\calo_K,L} = \U(L,h)$; the associated Hermitian symmetric domain
$\xx_{\calo_K,L}$ has dimension $\sum m_\sigma n_\sigma$.  If there exists some
$\sigma_0$ such that $(m_{\sigma_0},n_{\sigma_0}) = (1,r-1)$, and
if $m_\sigma n_\sigma = 0$ for $\sigma \not = \sigma_0$, then we again
have $\xx_{\calo_K,L} \iso \ball^{r-1}$.  Again, let
$\shim_{\calo_K,L}  =
\shim_{\kk_{\calo_K,L}}[G_{\calo_K,L},\xx_{\calo_K,L}]$.

In the applications pursued here, it turns out that either $L$ is unimodular, or there
is some prime $p$ which is totally ramified in $\calo_K$; $\calo_K$
acts on $\disc(L)$ through its quotient $\ff_p$; and $\disc(L)$, as an
abelian group, is isomorphic to $(\integ/p)^{2\floor{(r-1)/2}}$.
This shows up in the analysis by Kudla and Rapoport \cite{kroccult} of
occult period maps.  The only impact on the present study is that it
shapes the structure of the polarization in the moduli-theoretic
intepretation of $\shim_{\calo_K,L}$.

In any event, the Shimura stack $\shim_{\calo_K,L}$ admits a smooth
integral model over $\calo_K[1/\Delta(K)\Delta_L]$.

\subsection{Shimura varieties and K3 surfaces}
\label{subsecshimk3}

Let $L\inject \lk$ be a primitive sublattice of signature $(1,r-1)$.
Consistent with earlier notation, we set
\[
\shim^L =
\shim_{\kk^L}[G^L,\xx^L]=\shim_{\kk_{L^\perp}}[G_{L^\perp},\xx_{L^\perp}]. \]

Now let $\ubar \chi = (\mmu_n,\chi^\omega,\chi)$ determine an action
of $\mmu_n$ on $L^\perp$ as in \ref{subseccatk3}.  Let $E(\ubar\chi) = \rat(\zeta_n)$.
\begin{itemize}
\item If $n \ge 3$, let $\shim^{(L,\ubar\chi)} = \shim_{\calo_{E(\ubar\chi)},L^\perp}$; then $\shim^{(L,\ubar\chi)}(\cx)$ is an arithmetic quotient of a complex ball.
\item If $n = 2$, let $\shim^{(L,\ubar\chi)} = \shim_{L^\perp}$; then $\shim^{(L,\ubar\chi)}(\cx)$ is an arithmetic quotient of a Hermitian symmetric space of type IV.

\end{itemize}

With this notation, we have:

\begin{lemma}
The periods of a structured K3 surface determine
holomorphic open immersions
\[
\xymatrix{
\mk_{L,\cx} \ar[r] ^{\tau_{L,\cx}} & \shim^L_\cx \\
\mk_{(L,\ubar\chi),\cx} \ar[r]^{\tau_{(L,\ubar\chi),\cx}} &
\shim^{(L,\ubar\chi)}.
}
\]
\end{lemma}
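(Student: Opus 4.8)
The plan is to establish both open immersions by reducing to the classical Torelli theorem for lattice-polarized K3 surfaces and then accounting for the $\mmu_n$-action in the second case.

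\medskip

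\textbf{The $\mk_{L,\cx}$ case.} First I would recall from Section \ref{seccxperiod} that the strong Torelli theorem for $L$-polarized K3 surfaces (\cite[\S 3]{dolgachev_lattice}, \cite[\S 11]{dolgachevkondo}) gives an open immersion $\tau_{L,\cx}: \mk_{L,\cx} \inject \til\orth^L(\integ)\backslash \xx^L$. The only thing to check is that the target agrees with $\shim^L_\cx$. For this I invoke the description of $\xx^L$ in \S\ref{subsecperioddomain} together with the displayed isomorphism \eqref{eq:shimlcx} applied to the lattice $L^\perp$ of signature $(2, 19-(r-1))$: one has $\shim^L_\cx = \shim_{\kk_{L^\perp}}[G_{L^\perp}, \xx_{L^\perp}]_\cx \iso \til\sorth_{L^\perp}(\integ)\backslash \xx_{L^\perp} = \til\sorth^L(\integ)\backslash\xx^L$. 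Finally, the last display of \S\ref{subsecperioddomain} records that $\til\orth^L(\integ)\backslash\xx^L \iso \til\sorth^L(\integ)\backslash\xx^L$ as complex analytic spaces, since any arithmetic group meets every topological component of $\orth^L(\real)$. Composing these identifications promotes $\tau_{L,\cx}$ to the asserted open immersion into $\shim^L_\cx$.

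\medskip

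\textbf{The $\mk_{(L,\ubar\chi),\cx}$ case.} Here I would start from the open immersion $\mk_{L,\ubar\chi,\cx} \inject \til\orth^{L,\ubar\chi}(\integ)\backslash\xx^{L,\ubar\chi}$ already displayed in \S\ref{seccxperiod}, which follows from the $L$-polarized Torelli theorem once one observes that a marking intertwines $\rho$ with an action of $\mmu_n$ on $\lk$ of character $\chi$, so the period point is forced into the eigenspace $L^\perp_\cx(\chi^\omega)$. It then remains to identify $\til\orth^{L,\ubar\chi}(\integ)\backslash\xx^{L,\ubar\chi}$ with $\shim^{(L,\ubar\chi)}(\cx)$. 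When $n = 2$ the character $\chi^\omega$ is real, $L^\perp_\cx(\chi^\omega)$ is the complexification of a real quadratic space of signature $(2, m(\chi^\omega)-2)$, and one is back in the orthogonal situation already handled, so $\shim^{(L,\ubar\chi)} = \shim_{L^\perp}$ works verbatim. When $n \ge 3$ the eigenspace $L^\perp_\cx(\chi^\omega)$ is a $\cx$-vector space on which the cyclotomic field $E(\ubar\chi) = \rat(\zeta_n)$ acts, the Hermitian form induced from the intersection pairing has signature $(1, m(\chi^\omega)-1)$, and the group $\orth^{L,\ubar\chi}$ is (over $\rat(\zeta_n)$) the corresponding unitary group $\U(L^\perp \text{ as } \calo_{E(\ubar\chi)}\text{-module})$; under this identification $\xx^{L,\ubar\chi} \iso \ball^{m(\chi^\omega)-1}$ matches $\xx_{\calo_{E(\ubar\chi)},L^\perp}$, and the admissibility condition cutting out $\til\orth^{L,\ubar\chi}$ matches the level structure $\kk_{\calo_{E(\ubar\chi)},L^\perp}$ defining $\shim_{\calo_{E(\ubar\chi)},L^\perp} = \shim^{(L,\ubar\chi)}$. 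Hence $\tau_{(L,\ubar\chi),\cx}$ lands in $\shim^{(L,\ubar\chi)}(\cx)$ as an open immersion.

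\medskip

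\textbf{Main obstacle.} The routine part is the Torelli input; the delicate bookkeeping is the dictionary between lattices-with-$\mmu_n$-action and Hermitian $\calo_K$-modules — in particular checking that the signature of the induced Hermitian form is $(1, m(\chi^\omega)-1)$ (this uses that the action is \emph{purely non-symplectic}, so $\chi^\omega$ occurs with multiplicity one in $H^{2,0}$, pinning down the positive part of the signature) and that $\til\orth^{L,\ubar\chi}(\integ)$ is precisely the stabilizer $\kk_{\calo_K,L^\perp}$ used to define the unitary Shimura variety. I expect this compatibility of integral structures to be the step requiring the most care, but it is exactly the computation already carried out in \cite{kroccult} in the examples of interest, and the references \cite{dolgachevkondo} supply it in the generality needed here.
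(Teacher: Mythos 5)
Your proposal is correct and follows essentially the same route as the paper, whose proof is simply a citation: the period-domain computation is delegated to Dolgachev and Dolgachev--Kond\=o, and the identification with the complex points of the orthogonal (resp.\ unitary) Shimura variety to Kudla, Madapusi Pera, and Rizov. You have merely written out the standard details behind those citations — the chain $\shim^L_\cx \iso \til\sorth^{L}(\integ)\backslash\xx^L \iso \til\orth^L(\integ)\backslash\xx^L$ via \eqref{eq:shimlcx} and \S\ref{subsecperioddomain}, and the dictionary between $\mmu_n$-eigenspaces and Hermitian $\calo_{E(\ubar\chi)}$-lattices — which is exactly the content the paper takes as standard.
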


\begin{proof}
The period domain of a family of structured K3 surfaces is computed
in, e.g., \cite{dolgachev_lattice} and \cite{dolgachevkondo}. The
interpretation in terms of Shimura varieties is standard, and is drawn
out (in some cases) in, for instance,
\cite{kudlak3,perak3,rizovmoduli}.
\end{proof}

\begin{remark}
In the case of a datum $(L,\ubar\chi)$, the complement of the image of
$\tau_{(L,\ubar\chi)}$, when known, is often a ball quotient in its
own right; see, e.g., \cite{kondog4} for a representative example.
Kudla and Rapoport, in several cases, interpret this complement as a
"special cycle".  In particular, this complement is itself a
Shimura variety.  The author conjectures that this structure of the
complement holds integrally, as well.  In the special case of cubic
surfaces, this is worked out in \cite{achtercubics}; for now, it seems
that the general case remains open.
\end{remark}

\section{Integral period maps}

 With the notation established above, the Torelli theorem for complex
 K3 sufaces asserts that there is an open immersion
\[
\xymatrix{
\mk_{2d,\cx}  \ar@{^{(}->}[r]^{\tau_{2d,\cx}} &
\shim^{\ang{2d}}_\cx
}
\]
of stacks over $\cx$.  In fact, it is known that this map preserves
arithmetic:

\begin{proposition}
\label{proprizov}
The period map $\tau_{2d,\cx}$ descends to a morphism $\tau_{2d}:
\mk_{2d} \to \shim^{\ang{2d}}$ of stacks over $\rat$.
\end{proposition}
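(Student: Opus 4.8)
The plan is to descend the complex period map $\tau_{2d,\cx}$ using the extension property of the integral canonical model $\shim^{\ang{2d}}$, in the spirit of the classical argument for the Torelli map recalled in the introduction. First I would observe that both $\mk_{2d}$ (over $\integ[1/2d]$, by Maulik's result cited above) and $\shim^{\ang{2d}}$ (over $\integ[1/2\Delta_{\ang{2d}}]=\integ[1/2d]$, since $\Delta_{\ang{2d}}=2d$) are smooth Deligne-Mumford stacks, and that $\mk_{2d,\cx}$ is irreducible (or at least that its geometric components are, so one may argue componentwise). The period map $\tau_{2d,\cx}$ is a morphism of smooth complex stacks; the content of the proposition is purely that it is already defined over $\rat$ at the level of generic fibers — no integrality is claimed here — so strictly speaking one only needs descent from $\cx$ to $\rat$.

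The key step is a Galois-descent argument. For $\sigma \in \aut(\cx/\rat)$, I would compare $\tau_{2d,\cx}$ with its conjugate ${}^\sigma(\tau_{2d,\cx})$. Both $\mk_{2d}$ and $\shim^{\ang{2d}}$ are defined over $\rat$, so $\sigma$ acts on their $\cx$-points; what must be checked is that $\tau_{2d,\cx}$ commutes with this action. The cleanest route is the one used in \cite{rizovmoduli} and \cite{mpk3}: realize $\shim^{\ang{2d}}$ as (a connected component related to) the moduli space of polarized Hodge structures, equivalently as a component of an orthogonal Shimura variety, whose $\rat$-structure is characterized by the property that a complex point corresponding to a K3 surface $Z/\cx$ is sent, under $\sigma$, to the point attached to $Z^\sigma$. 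Since the $\rat$-structure on $\mk_{2d}$ is the moduli-theoretic one, this says precisely that $\tau_{2d,\cx}$ is $\aut(\cx/\rat)$-equivariant, hence descends to $\rat$. Concretely: a K3 surface over a number field $F$ gives an $F$-point of $\mk_{2d}$; its image under $\tau_{2d,\cx}$ is a $\bar\rat$-point of $\shim^{\ang{2d}}$, and the $\gal(\bar\rat/F)$-action on this point matches the conjugation action on the surface by the Kuga-Satake construction and Deligne's absolute Hodge results (or by Madapusi Pera's integral comparison), because the $L$-adic realizations of the period point are $H^2(Z_{\bar\rat},\integ_\ell)$-data which transform correctly under Galois.

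The main obstacle is establishing this equivariance cleanly, i.e. matching the transcendentally-defined period point of $Z^\sigma$ with the $\sigma$-conjugate of the period point of $Z$. This is exactly the place where one invokes either Rizov's theorem \cite[Thm.~3.9.1 or Cor.~3.9.1]{rizovmoduli} (for $p$-adic and $\cx$ comparisons together with the Kuga-Satake abelian variety) or, more robustly, Madapusi Pera's construction of the Kuga-Satake morphism over $\rat$ and the resulting $\rat$-model of the period map; alternatively Deligne's "Hodge cycles are absolutely Hodge" (\cite[I]{deligne_travaux}) applied to the Kuga-Satake correspondence suffices over $\rat$. I would therefore structure the proof as: (i) recall the Kuga-Satake construction attaches to $(Z,\lambda)$ an abelian variety $\mathrm{KS}(Z)$ with extra structure, functorially in $Z$ and compatibly with base change, giving a morphism over $\rat$ from $\mk_{2d}$ to a Siegel (or appropriate PEL) Shimura stack; (ii) note that this morphism factors through $\shim^{\ang{2d}}$ by Lemma \ref{lemshimmorphisms} and the definition of $\shim^{\ang{2d}}$ as an orthogonal Shimura stack inside the Siegel one; (iii) check that over $\cx$ this composite recovers $\tau_{2d,\cx}$ (this is the standard Hodge-theoretic computation of the Kuga-Satake period, as in \cite{huybrechtsk3}); (iv) conclude by uniqueness of a morphism restricting to a given map on $\cx$-points. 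Everything but (iii) is bookkeeping; step (iii) is the comparison of two descriptions of the same period domain, which is well documented in the references cited in the excerpt, so I would invoke rather than reprove it.
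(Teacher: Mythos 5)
Your strategy is sound, but it is genuinely different from what the paper does. The paper's proof of Proposition \ref{proprizov} is essentially a citation: Rizov established the descent of $\tau_{2d,\cx}$ to $\rat$ for the ample locus $\mk_{2d}^\circ$ via an analogue of the main theorem of complex multiplication for K3 surfaces (special points go to special points with the correct reciprocity action; see \cite[p.14]{rizovcrelle} and \cite[Thm.~5]{taelman17}), and the quasi-polarized case then follows by descent along $\spec\cx\to\spec\rat$ because $\mk_{2d}^\circ$ is dense in $\mk_{2d}$ --- a step your write-up silently absorbs by working with $\mk_{2d}$ throughout. You instead reconstruct the Kuga--Satake/absolute-Hodge route (the approach of \cite{perak3} and, ultimately, Deligne and Andr\'e): descend the Kuga--Satake abelian scheme, pass through a Siegel-type Shimura stack, and compare with the transcendental period map over $\cx$. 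Both routes are legitimate; the CM-theoretic citation is what the paper leans on, while your route is closer in spirit to the bootstrap diagram \eqref{diag:bootstrap} used later for general $L$. Two cautions about your version. First, step (i) is not ``bookkeeping'': the $\rat$-rationality and Galois-functoriality of the Kuga--Satake construction is precisely the hard content, requiring Deligne's absolute-Hodge theorem for the Kuga--Satake correspondence or Rizov's CM argument, so it must be cited as such. Second, your step (ii) asserts that $\shim^{\ang{2d}}$ sits ``inside'' the Siegel stack; it does not --- $\sorth$ is a quotient of $\mathrm{GSpin}$, not a subgroup of $\mathrm{GSp}$, so the Kuga--Satake morphism naturally lives on the GSpin cover, and recovering a morphism to the orthogonal Shimura stack (and the universal injectivity needed for your step (iv), in the spirit of Lemma \ref{lemshimclosed}) requires either working on that cover and descending, or keeping track of the tensor structures on the Kuga--Satake abelian scheme. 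This is fixable by following \cite{perak3} closely, but as written it is the one step of your argument that would not compile into a proof.
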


\begin{proof}
Rizov has proved this for $\mk_{2d}^\circ$, using an analogue of CM theory for K3 surfaces;
see \cite[p.14]{rizovcrelle} and \cite[Thm.~5]{taelman17}.  The
statement for $\mk_{2d}$ follows from descent relative to $\spec
\cx \to \spec \rat$, since $\mk_{2d}^\circ$ is dense in $\mk_{2d}$.
\end{proof}

Using Proposition \ref{proprizov} as a starting point, we will show
that other period maps also descend to a natural field of definition
and extend integrally.  We start with an interlude on level
structures, so that we can work with quasiprojective schemes and
verify descent in an elementary fashion.

\subsection{Level structures}
It is possible to define the notion of a lattice polarized K3 surface
with $\mathbb K$ level structure for an essentially arbitrary open
subgroup of $G^L(\widehat\integ)$; but we will content ourselves here with a
more limited notion which is adequate for our purposes.  (See \cite[\S
5.2]{taelman17} and \cite{rizovcrelle} for more details in the case $L =
\l2d$.)

Fix an integer $N>2$ which is relatively prime to $2p\Delta_L$.  Then
$\sorth^L(\integ/N)$ is admissble; any automorphism of $L^\perp\tensor
\integ/N$ lifts uniquely to
$\lk\tensor \integ/N$ as an element which fixes $L$.

If
$(Z \to S, \alpha) \in \mk_L(S)$, a full level $N$ structure on
$(Z\to S, \alpha)$ is an isomorphism of formed spaces $\beta: \lk
\tensor \integ/N \sriso R^2f_*\integ/N(1)$ such that the following
diagram commutes:
\[
\xymatrix{
\lk \tensor \integ/N \ar[r]^\beta_\sim & R^2 f_*\integ/N(1) \\
L\ar[u] \ar[r]^\alpha & \pic_{Z/S}(S) \ar[u]^{c_{1,N}}
}
\]
where the right-hand vertical map is the Chern class, and the
left-hand map is induced by the fixed inclusion $L\inject \lk$.

Since $N>2$, $\mk_{L,N}$ is representable by a smooth,
quasiprojective scheme over $\integ_{(p)}$ (see, e.g.,
\cite[Cor.~2.4.3]{rizovcrelle} for the case $L = \l2d$).  Moreover, because of the
admissibility condition,  $\mk_{L,N} \to \mk_L$ is Galois,
with covering group isomorphic to $\st{g \in \sorth_\lk(\integ/N): g
  \rest{L^\perp\tensor \integ/N} = \id}$.

As before, given $L$, choose a primitive embedding of lattices $\l2d
\inject L$.  The forgetful maps yield a Cartesian diagram
\[
\xymatrix{
\mk_{L,N} \ar@{^{(}->}[r] \ar[d] & \mk_{\l2d,N} \iso \mk_{2d,N} \ar[d]\\
\mk_L  \ar@{^{(}->}[r] & \mk_{\l2d}
}
\]
where the horizontal arrows are closed immersions, and the vertical
arrows are quotients by suitable subgroups of $\sorth_\lk(\integ/N)$.

\subsection{Descent to the reflex field}

\begin{lemma}
\label{lem:tauQ}
\begin{alphabetize}
\item Let $L \inject \lk$ be a primitive lattice of signature $(1,r-1)$.
Then the complex period map $\tau_{L,\cx}$ descends to a morphism
$\tau_{L}: \mk_L \to \shim^L$ of stacks over $\rat$.

\item Let $(L,\ubar\chi)$ be as in \ref{subseccatk3}.  Then the complex period map $\tau_{(L,\ubar\chi),\cx}$
descends to a morphism $\tau_{(L,\ubar\chi)}$ of stacks over $E(\ubar\chi)$.
\end{alphabetize}
\end{lemma}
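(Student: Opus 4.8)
The plan is to bootstrap from Proposition \ref{proprizov}, which supplies the descent of $\tau_{2d}$ to $\rat$, using the Cartesian diagram of the previous subsection together with the closed-immersion functorialities for Shimura varieties (Lemma \ref{lemshimclosed}). The strategy is to first handle the level-$N$ covers, which are honest quasiprojective schemes, and then descend the group quotients.

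For part (a), fix a primitive embedding $\l2d \inject L$ and $N>2$ prime to $2p\Delta_L$. On the generic fiber, the period map $\tau_{L,N,\cx}:\mk_{L,N,\cx}\to \shim^L_{N,\cx}$ fits, by functoriality of the period construction (the previous Lemma in \S\ref{subsecshimk3} and the discussion of \cite{dolgachev_lattice}), into a commutative square with $\tau_{2d,N,\cx}:\mk_{2d,N,\cx}\to\shim^{\l2d}_{N,\cx}$ along the closed immersions $\mk_{L,N}\inject\mk_{2d,N}$ and $\psi_{L^\perp,\l2d^\perp}:\shim^L_N\inject\shim^{\l2d}_N$ (the latter being a closed embedding on generic fibers by Lemma \ref{lemshimclosed}, applied to the primitive embedding $L^\perp \hookleftarrow \l2d^\perp$ of lattices of signature $(2,\ast)$). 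By Proposition \ref{proprizov}, $\tau_{2d,N}$ is defined over $\rat$; since $\mk_{L,N}$ and $\shim^L_N$ are quasiprojective $\rat$-schemes, and $\shim^L_{N,\rat}\to\shim^{\l2d}_{N,\rat}$ is a closed embedding, the composite $\mk_{L,N,\rat}\to\mk_{2d,N,\rat}\xrightarrow{\tau_{2d,N}}\shim^{\l2d}_{N,\rat}$ factors (scheme-theoretically, by properness of the closed subscheme and reducedness of the source, or more carefully by checking it factors on $\cx$-points and using that $\shim^L_{N,\rat}$ is the schematic image) through $\shim^L_{N,\rat}$, yielding $\tau_{L,N}$ over $\rat$ refining $\tau_{L,N,\cx}$. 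It remains to descend the level structure: both $\mk_L$ and $\shim^L$ are the quotient stacks of $\mk_{L,N}$, resp.\ $\shim^L_N$, by the finite group $\st{g\in\sorth_\lk(\integ/N):g|_{L^\perp\otimes\integ/N}=\id}$, and $\tau_{L,N}$ is equivariant for this action; hence it descends to $\tau_L:\mk_L\to\shim^L$ over $\rat$. Uniqueness of the descent (and agreement with $\tau_{L,\cx}$) follows because $\mk_{L,\cx}\to\mk_{L}\times_\rat\cx$ is faithfully flat and $\shim^L$ is separated.

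For part (b), I would proceed analogously, now using the closed immersion $\mk_{(L,\ubar\chi)}\inject \mk_L$ of Lemma \ref{lemforgetGclosed} (finite, hence closed) and the corresponding morphism of Shimura data $\shim^{(L,\ubar\chi)}\to\shim^L$: in the ball case $n\ge3$ this is the map from the unitary Shimura variety attached to $(L^\perp,h)$ regarded as an $\calo_{E(\ubar\chi)}$-Hermitian form, landing in the orthogonal one via restriction of scalars $\U(L^\perp,h)\hookrightarrow\sorth_{L^\perp\otimes\rat}$ (standard, cf.\ \cite{kudlak3,kroccult}); in the type-IV case $n=2$ it is an inclusion of orthogonal Shimura data cut out by the involution. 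These are morphisms of Shimura data over $E(\ubar\chi)=\rat(\zeta_n)$, so by Lemma \ref{lemshimmorphisms}(a),(c) they give morphisms, with closed-embedding generic fiber, of Shimura stacks over $E(\ubar\chi)$. Composing the descended $\tau_L$ (base-changed to $E(\ubar\chi)$) with the closed immersion $\mk_{(L,\ubar\chi)}\inject\mk_L$ and arguing that the image lands in the sub-Shimura-variety $\shim^{(L,\ubar\chi)}$ — again by checking on $\cx$-points, where the eigenspace decomposition of $L^\perp\otimes\cx$ forces the period point into $\xx^{L,\ubar\chi}$ — yields $\tau_{(L,\ubar\chi)}$ over $E(\ubar\chi)$.

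The main obstacle I anticipate is not any single geometric input but the bookkeeping needed to promote a map defined on $\cx$-points to a genuine scheme-theoretic factorization through the smaller (closed) Shimura subvariety over $\rat$ (resp.\ over $\rat(\zeta_n)$): one must know that $\shim^L_{N,\rat}$ is exactly the schematic image of $\mk_{L,N,\rat}$ inside $\shim^{\l2d}_{N,\rat}$, or equivalently that the Torelli-type open immersion already identifies $\mk_{L,N,\rat}$ with an open subscheme of the closed subvariety, rather than merely mapping into it set-theoretically. This is where one leans on the fact that $\tau_{L,N,\cx}$ is an open immersion (Lemma in \S\ref{subsecshimk3}) together with flat descent, so that the factorization, once established over $\cx$, is automatically defined over the reflex field and is unique. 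The equivariance for the deck group of the level cover, and the verification that the unitary-to-orthogonal (resp.\ type-IV) embedding of Shimura data is the correct one compatible with the Hodge-theoretic period map, are routine once the complex picture of \S\ref{seccxperiod} is in hand.
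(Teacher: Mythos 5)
Your proposal is correct and follows essentially the same route as the paper: reduce to the level-$N$ quasiprojective covers, bootstrap from Proposition \ref{proprizov} via the commuting square with the closed immersions $\mk_{L,N}\inject\mk_{2d,N}$ and $\psi:\shim^L_N\inject\shim^{\ang{2d}}_N$ of Lemma \ref{lemshimclosed}, and then descend the quotient by $G^L(\integ/N)$ (resp.\ $G^{(L,\ubar\chi)}(\integ/N)$ for part (b)). The only cosmetic difference is that you phrase the final step as a scheme-theoretic factorization through the closed image of $\psi$, whereas the paper deduces $\aut(\cx/\rat)$-equivariance of $\tau_{L,N,\cx}$ on $\cx$-points from the universal injectivity of $\psi$; both rest on the same input and are equally valid.
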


\begin{proof}
We address part (a) in detail.
Fix some $N>2$.
Since $\mk_L = [\mk_{L,N}/G^L(\integ/N)]$ and $\shim^L =
[\shim^L_{N}/G^L(\integ/N)]$, it suffices to show that the complex
period map with level $N$ structure, $\tau_{L,N,\cx}: \mk_{L,N,\cx}
\to \shim^L_{N,\cx}$ descends to $\rat$.  Choose a primitive embedding
$\ang{2d} \inject L$.  We have a commuting diagram of universally injective morphisms of
complex reduced quasiprojective varieties
\begin{equation}
\label{diag:bootstrap}
\xymatrix{
\mk_{L,N,\cx} \ar@{^{(}->}[r]^{\tau_{L,N,\cx}}
\ar@{^{(}->}[d]^{\phi_{L,2d,\cx}} & \shim^L_{N,\cx}
\ar@{^{(}->}[d]^{\psi_{G^L,G^{\ang{2d}}}} \\
\mk_{2d,N,\cx}  \ar@{^{(}->}[r] & \shim^{\ang{2d}}_{N,\cx}.
}
\end{equation}
Since $\mk_{L,N,\cx} \to \mk_{2d,N,\cx}$ and $\mk_{2d,N,\cx}
\to \shim^{\ang{2d}}_{N,\cx}$ descend to $\rat$, so does
$\psi_{G^L,G^{\ang{2d}}} \circ \tau_{L,N,\cx}$.  Since
$\psi_{G^L,G^{\ang{2d}}}$ is universally injective (Lemma \ref{lemshimclosed}), $\tau_{L,N,\cx}$
is $\aut(\cx/\rat)$-equivariant on $\cx$-points, and thus descends to
$\rat$ as well.

The proof of (b) is exactly the same, except that
the role of $G^L(\integ/N)$ is now played by the finite unitary group
$G^{(L,\ubar\chi)}(\integ/N)$, and
\eqref{diag:bootstrap} is replaced with
\[
\xymatrix{
\mk_{(L,\ubar\chi),N,\cx}
\ar@{^{(}->}[r]^{\tau_{(L,\ubar\chi),N,\cx}}
\ar@{^{(}->}[d]^{\phi_{(L,\ubar\chi),L,\cx}} &
\shim^{(L,\ubar\chi)}_{N,\cx}
\ar@{^{(}->}[d]^{\psi_{G^{(L,\ubar\chi)},G^{L}}} \\
\mk_{L,N,\cx}  \ar@{^{(}->}[r] & \shim^{L}_{N,\cx}.
}
\]
\end{proof}

\subsection{Integral extension}

Granting the existence of integral canonical models of
Shimura varieties, it is not hard to see that $\tau_{2d}$ extends to a
morphism of stacks over $\integ[1/6d]$; this is achieved in
\cite[Thm.~4.3.3]{rizovmoduli}.  We refer to \cite[Cor.~5.15]{perak3}
for the difficult extension of this work to $\integ[1/2]$.

\begin{remark}
\label{rem:taelmantwist}
In fact, in
\cite{perak3}, the proof naturally gives rise to a period map for a
trivial double cover of $\mk_{2d}$.  Taelman has observed
\cite{taelman17} that by viewing the period map as measuring the
primitive cohomology \emph{twisted by the determinant}, the need for a
double cover is eliminated.

Following Taelman's analysis \cite[\S 5]{taelman17}, let $\kk_L^* =
\st{ \gamma \in \til\orth_L(\widehat\integ) : \det(\gamma) \in \st{\pm
    1}}$.  Then $\kk_{L^\perp}$ acts on $L^\perp$ via the determinant;
Taelman defines, for instance, a period map
\[
\xymatrix{
\mk_{2d}(\cx) \to [\sorth^{\ang{2d}}(\cx) \backslash (\xx^{\ang{2d}} \times \sorth^{2d}(\aff_f)/(\kk^{\ang{2d}})^*].
}
\]
The target space is isomorphic, as an analytic space, to our
$\shim_{\kk^{\ang{2d}}}[G^{\ang{2d}},\xx^{\ang{2d}}]$.  However, this
target naturally identifies a polarized Hodge structure $(H,s)$ with
$(H,-s)$.  In this way, the effect of a choice of generator of
$\ang{2d}$ is erased.

More generally, by following Taelman's formulation, we can suppress
the choice of a ``positive light cone'' in the definition  of $\mk_L$
in \ref{subseccatk3}; two $L$-polarizations which agree up to sign are
identified by the action of $\kk_L^*$ through its determinant.
\end{remark}

We now secure analogous results for other period maps.

\begin{lemma}
\label{lem:tauZ}
\begin{alphabetize}
\item Let $L\inject \lk$ be a primitive lattice of signature $(1,r-1)$.
Then the period map extends to a morphism $\tau_L: \mk_L \to
\shim^L$ of stacks over $\integ[1/2\Delta(L)]$.

\item Let $(L,\ubar\chi)$ be as in \ref{subseccatk3}.  Then the period map extends to a morphism
$\tau_{(L,\ubar\chi)}: \mk_{(L,\ubar\chi)} \to \shim^{(L,\ubar\chi)}$ of stacks
over $\calo_{E(\ubar\chi)}[1/2\Delta(L)]$.
\end{alphabetize}
\end{lemma}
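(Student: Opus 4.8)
The plan is to reduce the integral extension statement to the universal extension property of integral canonical models of Shimura varieties, exactly as in the proof of Lemma~\ref{lem:tauQ}, but now exploiting smoothness of the source stacks (Proposition~\ref{propmlsmooth} and the proposition immediately following it) rather than merely working over $\rat$. First I would treat part~(a). By Lemma~\ref{lem:tauQ}(a) we already have a morphism $\tau_L:\mk_L \to \shim^L$ of stacks over $\rat$; the task is to extend it over $\integ[1/2\Delta(L)]$. Over this base, $\mk_L$ is a smooth Deligne--Mumford stack (Proposition~\ref{propmlsmooth}) and $\shim^L = \shim_{\kk_{L^\perp}}[G_{L^\perp},\xx_{L^\perp}]$ is the integral canonical model over $\integ[1/2\Delta_{L^\perp}]$ (\S\ref{subsecorthshim}); note $\Delta_{L^\perp} = \Delta_L$ since $L$ is primitive in the unimodular lattice $\lk$, so the bases match. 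Because the integral canonical model satisfies the extension property against regular formally smooth schemes, and because this property passes to the associated Deligne--Mumford stack (by the standard descent along an \'etale presentation $\mk_{L,N}\to\mk_L$ by a smooth quasiprojective scheme, $N$ coprime to $2p\Delta_L$), the generic-fiber map $\tau_{L}$ extends uniquely over $\integ[1/2\Delta(L)]$.

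To make the stack-level argument precise I would pass to the level-$N$ covers already introduced: $\mk_{L,N}$ is representable by a smooth quasiprojective scheme over $\integ_{(p)}$ for any prime $p\nmid 2N\Delta_L$, and $\shim^L_N = \shim_{\kk^L_N}[G^L,\xx^L]$ has a smooth integral model over $\integ[1/2\Delta_L N]$ with $\shim^L = [\shim^L_N/G^L(\integ/N)]$. The morphism $\tau_{L,N}$ over $\rat$ from Lemma~\ref{lem:tauQ} then extends, by the extension property applied to the regular formally smooth scheme $\mk_{L,N}$, to a morphism over $\integ[1/2\Delta_L N]$; since $\mk_{L,N}$ is dense in itself and the extension is unique, this extension is $G^L(\integ/N)$-equivariant (equivariance holds on the generic fiber and hence, by separatedness of the target and density of the generic fiber, integrally), so it descends to the quotient stacks to give $\tau_L$ over $\integ[1/2\Delta_L N]$. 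Finally, taking $N, N'$ coprime and gluing the two extensions (which agree over $\rat$, hence everywhere by separatedness) removes the dependence on $N$ and produces the morphism over $\integ[1/2\Delta(L)]$.

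For part~(b) the argument is identical \emph{mutatis mutandis}: replace $\mk_L$ by $\mk_{(L,\ubar\chi)}$, which is a smooth Deligne--Mumford stack over $\integ[\zeta_n,1/6\Delta_L n]$ by the proposition in \S\ref{subsecstackk3}; replace $\shim^L$ by $\shim^{(L,\ubar\chi)}$, which for $n\ge 3$ is the integral canonical model $\shim_{\calo_{E(\ubar\chi)},L^\perp}$ over $\calo_{E(\ubar\chi)}[1/\Delta(K)\Delta_L]$ of a unitary Shimura variety (\S\ref{subsecunitaryshim}), and for $n=2$ is $\shim_{L^\perp}$ over $\integ[1/2\Delta_L]$ as before; and use the level-$N$ cover $\mk_{(L,\ubar\chi),N}$ together with the finite unitary group $G^{(L,\ubar\chi)}(\integ/N)$ in place of $G^L(\integ/N)$. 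One bookkeeping point: the ramification of $\calo_{E(\ubar\chi)} = \integ[\zeta_n]$ over $n$ is already inverted in the source stack's base $\integ[\zeta_n,1/6\Delta_L n]$, and the unitary model's bad primes divide $\Delta(K)\Delta_L \mid n\Delta_L$, so the common base of definition is $\calo_{E(\ubar\chi)}[1/2\Delta(L)]$ after also inverting $2$ (inherited from the orthogonal side and the even-characteristic caveat); this matches the stated ring. The starting point over $E(\ubar\chi)$ is Lemma~\ref{lem:tauQ}(b).

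The main obstacle is the transfer of the extension property from schemes to Deligne--Mumford stacks in a way compatible with the quotient presentations on both sides: one must check that the unique extension of $\tau_{L,N}$ is equivariant for the covering group, which requires knowing that the integral model of $\shim^L$ (resp.\ $\shim^{(L,\ubar\chi)}$) is separated, so that two integral morphisms agreeing on a dense open (the generic fiber) coincide, and that the level cover $\mk_{L,N}\to\mk_L$ is finite \'etale with the expected Galois group over the integral base (this is where coprimality of $N$ to $2\Delta_L$ and to the residue characteristics is used). Granting the cited smoothness statements and the formal properties of integral canonical models recalled in \S\ref{SS:shimintegral}, everything else is a routine gluing and descent argument.
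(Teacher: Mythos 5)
Your proposal is correct and follows essentially the same route as the paper: reduce to the level-$N$ cover $\mk_{L,N}$, which is a smooth quasiprojective scheme over $\integ_{(p)}$ for $p\nmid 2\Delta_L N$, invoke the extension property of the integral canonical model, and use separatedness of the target to glue the local extensions (the paper phrases this as reducing to a fixed prime $p$ and choosing $N$ coprime to $p$, while you fix $N$ and glue over coprime choices, but these are the same localization argument). The equivariance-and-descent step you spell out is left implicit in the paper's two-sentence proof, and your treatment of it is sound.
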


\begin{proof}
Since $\shim^L$ is separated, it suffices to show
that, for a fixed $p\nmid 2\Delta(L)$, $\tau_L$ extends to
$\integ_{(p)}$.  Let $N\ge 3$ be a natural number relatively prime to
$p$.
Since $\mk_{L,N}$ is smooth (Proposition
\ref{propdeformlattice}), the extension
property of the integral canonical model implies that the morphism
$\tau_L$ extends to $\integ_{(p)}$.

The proof of (b) is the same, except that the necessary smoothness is
secured in Lemma \ref{lemdeformaut}.
\end{proof}

\begin{remark}
  The generic fiber of the morphism $\shim^L\inject \shim^{\ang{2d}}$ is a closed immersion (Lemma \ref{lemshimclosed}).  If it were known that $\psi_{L^\perp,\ang{2d}^\perp}$ is a closed immersion of Shimura stacks, one could give an elementary proof of Lemma \ref{lem:tauZ}, as follows.
Suppose $p\nmid \Delta(L)d(L)$; choose $d$ with $p\nmid d$ such that there exists a primitive $\ang{2d} \inject L$.
We start with a diagram as in \eqref{diag:bootstrap}, where all
objects are defined over $\integ_{(p)}$, except that  $\tau_L$ is only known to be defined over $\rat$:
\[
\xymatrix{
\mk_{L,N} \ar@{..>}[r]^{\tau_L} \ar[d]^\phi & \shim^L_N \ar@{^{(}->}[d]^\psi\\
\mk_{2d,N} \ar@{^{(}->}[r]^{\tau_{2d}}& \shim^{\ang{2d}}_N
}
\]
We know that $\phi$ is a closed immersion and $\tau_{2d}$ is an open immersion, and thus the composition $\mk_{L,N} \to \shim^{\ang{2d}}_N$ is a locally closed immersion.  All schemes involved are Noetherian and $\mk_{L,N}$ is reduced, so the image of $\mk_{L,N}$ is an open subscheme of a closed subscheme of $\shim^{\ang{2d}}_N$
\cite[\href{https://stacks.math.columbia.edu/tag/03DQ}{Tag
  03DQ}]{stacks-project}.  We are operating under the hypothesis  that $\psi$ is a closed
immersion (Lemma \ref{lemshimmorphisms}).  Since $\shim^L_N$ is reduced, $\psi$ maps $\shim^L_N$
isomorphically onto its image, a closed subscheme of $\stack
\shim^{\l2d}_N$.

We have observed (\S \ref{subsecshimk3}) that  $\tau_{2d}\circ \phi$ maps the
characteristic zero fiber $\mk_{L,N,\rat}$ into
$\psi(\shim^L_{N,\rat})$ inside $\shim^{\l2d}_N$.  Since
$\psi(\shim^L_N)$ is closed, $\mk_{L,N,\rat}$ is dense in $\mk_{L,N}$ (by flatness over $\integ_{(p)}$; see Proposition \ref{propmlsmooth}) and $\tau_{2d}\circ \phi(\mk_{L,N})$ is locally
closed, it follows that $\tau_{2d}\circ \phi_N(\mk_{L,N})
\subseteq \psi(\shim^L_N)$.

In particular, $\tau_{2d}\circ \phi$ factors through a locally
closed immersion $\tau_{L}: \mk_{L,N} \to \shim^L_N$. We again
invoke the fact that, for Noetherian schemes, a locally closed
immersion factors as an open immersion followed by a closed
immersion.  The fact that $\dim \mk_{L,N} = \dim \shim^L_N$ (and
the reducedness of $\mk_{L,N}$) now
implies that, in any such factorization, the closed immersion must be
the identity map and therefore $\tau_{2d}$ is an open immersion.

\end{remark}

\section{From complete intersections to K3 surfaces}
\label{sec:examples}

Thanks especially to works of Kond\=o, we know that sometimes one can
associate a structured K3 surface to certain types of complete
intersection varieties.  Some of these constructions are reviewed
here, with an eye towards making sense of these associations in
families, and ultimately explaining the arithmetic origin of Kond\=o's
analytic ball-quotient maps.

In an attempt to minimize repetition in the statement of our main
results, we make the following definition:

\begin{definition}
\label{defarithoccult}
Say that $(\mk^\circ, \stack N, \stack S, \kappa,\tau)$ satisfies $(\dagger)$
over $\calo$ if there is a diagram
\begin{equation}
\tag{$\dagger$}
\xymatrix{
\mk \ar[d]^\tau \ar[r]^\kappa & \stack N \\
\stack S
}
\end{equation}
of stacks over $\calo$ where $\kappa$ induces an isomorphism on coarse
moduli spaces, and $\tau$ induces an open immersion $\mk^\circ(\cx)
\inject \stack S(\cx)$.
\end{definition}

We should note that, in many of the examples studied here (\S
\ref{subsec:g4}, \ref{subsec:g3}, \ref{subsec:cubsurf}), Kudla and
Rapoport have already shown that a transcendentally-defined occult
period map descends to a natural cyclotomic field of definition
\cite[\S 9]{kroccult}.  Their method of proof goes back (at least) to
Deligne \cite[Thm.~2.12]{deligneniveau}.  Roughly speaking, one shows
that a monodromy representation is so large that a certain abelian
scheme admits no automorphisms, and thus descends.  This strategy
presumably also dispatches \S \ref{subsec:6points}, perhaps with
\cite{delignemostow} providing the necessary monodromy calculation.
Applications \S \ref{subsec:g6} and \ref{subsec:5points} don't
literally fit within the framework of unitary Shimura varieties
attached to quadratic imaginary fields, which may explain their
omission from \cite{kroccult}.

\subsection{Stacks of varieties with group action}

In Kond\=o's constructions, the original variety is encoded in the fixed
locus of the group action on the K3 surface.  If a group scheme $G/S$ acts on a scheme $Z/S$, one can define $Z^G$,
the fixed point stack \cite[Prop.~2.5]{romagny05}.

\begin{lemma}
\label{lemfixed}
  Suppose $Z \to S$ is a K3 space and $G \subset \aut_{Z/S}(S)$ is a
  nontrivial finite cyclic group.
  \begin{alphabetize}
  \item The fixed locus $Z^{G} \to S$ is a scheme.
  \item If $s\in S$, then $Z^{G}_s$ is smooth, and has at most
    one component of dimension one and genus at least two.

  \item If $S$ is irreducible with generic point $\eta$, and if
    $C_\eta  \subset Z^{G}_\eta$ is a curve of genus at least two, then the closure $C$ of
  $C_\eta$ in $Z^{G}$ is a smooth, proper relative curve over
  $S$.
  \end{alphabetize}
\end{lemma}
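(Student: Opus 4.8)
The plan is to reduce all three parts to two ingredients: the structure theory of fixed loci of linearly reductive group schemes, and the fact (recalled in \S\ref{subsec:k3def}) that the N\'eron--Severi lattice of a K3 surface has signature $(1,\ast)$.

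For (a), I would invoke \cite[Prop.~2.5]{romagny05} to realize $Z^{G}$ as a closed subspace of $Z$, so that $Z^{G}\to S$ is proper and separated. Because $G$ is linearly reductive --- it is finite cyclic, and its order is invertible on the relevant base --- the fixed locus of its action on the smooth $S$-space $Z$ is again smooth over $S$ (linearize the action on completed local rings, or use \'etale slices). Thus $Z^{G}\to S$ is smooth and proper; no geometric fibre can have a $2$-dimensional component, since such a component would be a smooth, full-dimensional closed subscheme of the irreducible surface $Z_{\bar s}$, hence all of it. So $Z^{G}\to S$ is smooth proper with fibres of dimension $\le 1$, and after splitting off the open and closed locus where it is relatively $0$-dimensional --- an \'etale $S$-scheme --- what is left is a smooth proper relative curve; such a thing is a scheme, Zariski-locally projective over $S$ (via a high power of the relative dualizing sheaf where the fibral genus is positive, and via the theory of Brauer--Severi schemes where it is $0$). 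This gives (a), and along the way the smoothness of $Z^{G}_s$ asserted in (b).

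For the remaining part of (b), pass to $\bar s$: then $Z^{G}_{\bar s}\subseteq Z_{\bar s}$ is a disjoint union of smooth closed subvarieties, and any $1$-dimensional component $C$ is a smooth prime divisor on the K3 surface $Z_{\bar s}$, so adjunction (with trivial canonical class) gives $C^{2}=2g(C)-2$. If $C,C'$ were two distinct components of genus $\ge 2$ they would be disjoint, whence $C\cdot C'=0$ while $C^{2},C'^{2}>0$, so $C$ and $C'$ would span a rank-$2$ positive-definite sublattice of $\ns(Z_{\bar s})$ --- impossible in a lattice of signature $(1,\ast)$. Hence at most one component has genus $\ge 2$. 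This lattice argument is the real content of the lemma.

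For (c), let $W\subseteq Z^{G}$ be the open and closed subspace on which the smooth morphism $Z^{G}\to S$ has relative dimension $1$, so that $W\to S$ is a smooth proper relative curve; let $W\to S'\xrightarrow{\pi}S$ be its Stein factorization, with $\pi$ finite \'etale and $W\to S'$ smooth proper geometrically connected of relative dimension $1$. The genus of the connected fibre is locally constant on $S'$; let $S'_{\ge 2}$ be the open and closed locus where it is $\ge 2$. The hypothesis curve $C_\eta$ is a connected component of $Z^{G}_\eta$, hence the fibre of $W\to S'$ over a (necessarily generic) point $\eta'\in S'_{\ge 2}$ with $\pi(\eta')=\eta$; thus the restriction $\pi\colon S'_{\ge 2}\to S$ is finite \'etale with image open, closed and nonempty, so surjective, and by (b) each of its geometric fibres has at most one point, so it is also universally injective --- hence an isomorphism. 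Therefore $C:=W\times_{S'}S'_{\ge 2}$ is open and closed in $W$, hence in $Z^{G}$; it maps to $S\cong S'_{\ge 2}$ by a smooth proper morphism of relative dimension $1$ with generic fibre $C_\eta$; and, being flat over the integral scheme $S$, it has no component supported over a proper closed subscheme, so it coincides with the scheme-theoretic closure of $C_\eta$ in $Z^{G}$. This proves (c). The main obstacle, beyond the lattice computation in (b), is precisely this bookkeeping in (c): using the Stein factorization to pin down the unique component through $C_\eta$ and checking that it flatly fills out all of $S$.
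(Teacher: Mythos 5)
Your proof is correct, and it is considerably more self-contained than the paper's, which is largely a proof by citation: the paper obtains the algebraic-space structure of $Z^{G}$ from Romagny and then observes it is a scheme because its fibers are smooth of dimension at most one; it outsources the smoothness of $Z^{G}_s$ and the uniqueness of the general-type component to \cite[Lemma 2.2]{artebanisartitaki} in characteristic zero and \cite[Prop.~1.4]{keum14} in positive characteristic, remarking only that the uniqueness argument is nothing more than the Hodge index theorem; and it disposes of (c) in one line by upper semicontinuity of fiber dimension. You instead prove relative smoothness directly from linear reductivity of $G$ (which is actually convenient later, since (c) needs smoothness over $S$ and not just fiberwise), you reproduce the Hodge index argument explicitly (two disjoint fixed curves of genus $\ge 2$ have $C^2, C'^2>0$ and $C\cdot C'=0$, hence span a positive-definite rank-two sublattice of $\ns(Z_{\bar s})$, impossible in signature $(1,\ast)$) --- which is precisely the content the paper attributes to the cited lemma --- and you replace the semicontinuity argument in (c) with a Stein-factorization bookkeeping that makes explicit why the component through $C_\eta$ is open and closed and spreads out over all of $S$. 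Two small points to tighten. First, your appeal to linear reductivity requires $\#G$ to be invertible on $S$; this hypothesis is absent from the statement but implicit in the paper's context (everything relevant lives over $\integ[1/2\Delta_L n]$, and the citation of Keum carries the same tameness assumption), and without it the fixed locus of a wild action need not be smooth. Second, when you rule out a two-dimensional component of $Z^{G}_{\bar s}$ you should say why $Z^{G}_{\bar s}=Z_{\bar s}$ is impossible: a nontrivial element of $G$ fixing the reduced irreducible surface $Z_{\bar s}$ pointwise would restrict to the identity on that fiber, which cannot happen (over a connected $S$) because $\aut_S(Z)$ is separated and unramified over $S$, so the locus where a section agrees with the identity is open and closed.
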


  \begin{proof}
    Since $Z\to S$ is an algebraic
    space, so is $Z^{G}$ \cite[Rem.~3.4(ii)]{romagny05}; since
    all components of all fibers are
    smooth (see below) of dimension at most one,  $Z^{G}$ is
    actually a scheme.

    The smoothness assertion of (b) is proved in \cite[Lemma
    2.2]{artebanisartitaki} (in characteristic zero) and
    \cite[Prop.~1.4]{keum14} (in positive characteristic).  The fact that there is
    at most one curve of general type is also in \cite[Lemma 2.2]{artebanisartitaki}.  (While the
    statement is only claimed for complex K3 surfaces, the argument
    relies on nothing more than the Hodge index theorem.)

    Part (c) follows from the upper semicontinuity, on $Z^{G}$,
    of the function $z \mapsto \dim(Z^{G}_{\varpi(z)})$ \cite[IV.13.1.3]{ega43}.
  \end{proof}

  We will occasionally have cause to work with the stacks of smooth
  relative uniform cyclic covers of projective spaces, as in
  \cite{arsievistoli}.  Recall that if $X \to S$ is a smooth scheme,
  then a smooth relative uniform cyclic cover of degree $n$ consists
  of a morphism $f:Y \to X$ which commutes with an action of $\mmu_n$
  on $Y$ such that the branch divisor of $f$ is smooth over $S$, and,
  Zariski-locally on $X$, $Y$ is $\mmu_n$-equivariantly isomorphic to
  $\calo_Y(U)[y]/(y^n-h)$.  With a slight adjustment of the notation
  of \cite{arsievistoli}, let $\stack H(n,m,d)$ be the stack of smooth
  relative uniform cyclic covers $f:Y \to P \to S$ of degree $n$,
  where $P \to S$ is a Brauer-Severi scheme of dimension $m$, and the
  branch divisor of $f$ has degree $d$.  Thus, for example, $\stack
  H(2,1,2g+2)$ is the moduli stack of hyperelliptic curves of genus
  $g$.  (A Brauer-Severi scheme $P \to S$ of dimension $m$ is an
  $S$-scheme which, \'etale-locally on $S$, is isomorphic to the
  projective space of dimension $m$.)

In the special case where $m=1$, let $\tilstack H(n,1,d)$ be the stack
of smooth relative uniform cyclic covers of Brauer-Severi curves
equipped with a labelling of the branch locus; there is a forgetful
map $\tilstack H(n,1,d) \to \stack H(n,1,d)$, with fiber a torsor under
the symmetric group $S_d$ on $d$ letters.  (Since a Brauer-Severi
scheme with a section is trivial, the underlying scheme of an object
in $\tilstack H(n,1,d)$ is actually a family of projective lines,
rather than merely \'etale-locally a family of projective lines.)

Let $\tilstack M_{0,d}$ be the moduli space of $d$  distinct, labelled
points in $\proj^1$.  By sending a labelled branched cover of the projective
line to its branch locus, we obtain a morphism $\tilstack H(n,1,d)
\to \tilstack M_{0,d}$.  In fact, this morphism is the rigidification
along $\mmu_n$; it factors as $\tilstack H(n,1,d) \to \tilstack
H(n,1,d)\fatslash \mmu_n \sriso \tilstack M_{0,d}$, and in particular
induces an isomorphism on coarse moduli
spaces.  This morphism is $S_d$-equivariant, and we have $\stack
H(n,1,d) \to \stack M_{0,d}$.

Below, we will often have a morphism $\alpha: \stack S \to \stack T$ of smooth
stacks.  Then each stack is normal, and in particular has a normal
coarse moduli space.  If $\alpha$ induces a bijection on geometric
points, then (by Zariski's main theorem) $\alpha$ induces an
isomorphism of coarse moduli spaces.

\subsection{Curves of genus four}
\label{subsec:g4}

Here we follow \cite{kondog4}.  The argument given here is also a
prototype for the remainder of this section.

Let $C/k$ be a smooth, projective nonhyperelliptic curve of genus $4$
with no vanishing theta constants, over an algebraically closed field
in which $6$ is invertible.  Its canonical model is the  (complete)
intersection in $\proj^3$ of quadric and cubic surfaces $Q$ and $S$.
Let $\varpi: Z\to Q$ be the triple cover of $Q$ branched along $C$;
then $Z$ comes equipped with an action by $\mmu_3$.
Let $M_1$ and $M_2$ be smooth lines  on $Q$ which represent the two
rulings, and let $N_i = \varpi\inv M_i$.  Then each $N_i$ is an
elliptic curve, and the two of them pair as $(N_1, N_2) = 3$.
Moreover, $N_1$ and $N_2$ span a primitive lattice of $\pic(Z)$,
isomorphic to $L_4 := U(3)$.
Let $L_4 \inject \lk$ be a primitive embedding; the orthogonal
complement of this copy of $U(3)$ is $L_4^\perp \iso U(3) \oplus U
\oplus E_8(-1)^{\oplus 2}$ \cite[p.~386]{kondog4}.
In the notation of \S
\ref{sec:lattice}, $d(L_4) = 3$, and so there is a closed immersion $\mk_L \inject \mk_{\ang 6}$ of smooth stacks over $\integ[1/6]$.

The action of $\mmu_3$ on $Z$ is nonsymplectic, in the sense that
$\chi^\omega$, the character of the representation of $\mmu_3$ by
which $\mmu_3$ acts on $H^0(Z,\Omega^2)$, is faithful.
Kond\=o explicitly
writes down a certain representation $\rho$ of $\mmu_3$ on $L_4^\perp$.
(Of course, $L_4^\perp$ is free over $\integ[\zeta_3]$, in accordance
with \cite[Lemma 1.1]{machidaoguiso98}.)
Let $\chi_4$
be the character of $\rho\oplus \rho_{\triv}^{\oplus
  2}$.
In the case where $k = \cx$, Kond\=o shows that $Z$ is an element of
$\mk_{L_4,\ubar\chi_4}(\cx)$.  Let $\stack N_4$ be the moduli space of
nonhyperelliptic curves of genus $4$.  It is not hard to extend the
work in \cite{kondog4} to show:

\begin{lemma}
\label{lemn4}
There is a morphism $\kappa_4: \mk_{L_4,\ubar\chi_4}^\circ \to \stack N_4$  of stacks
over $\integ[\zeta_3,1/6]$ which is a bijection on geometric points.
\end{lemma}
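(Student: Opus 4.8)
The plan is to construct the morphism $\kappa_4$ by exhibiting the genus-four curve as a canonically defined closed subscheme of the fixed locus of the $\mmu_3$-action, working in families, and then to check the bijection on geometric points by invoking Kond\=o's explicit analysis over $\cx$ together with a rigidity argument in positive characteristic. First I would recall, via Lemma \ref{lemfixed}, that for any $(Z \to S, \alpha, \rho) \in \mk^\circ_{L_4,\ubar\chi_4}(S)$ with $S$ irreducible the fixed locus $Z^{\mmu_3} \to S$ is a scheme, that each geometric fiber $Z^{\mmu_3}_s$ is smooth with at most one component of dimension one and genus at least two, and that the closure of the generic genus-$\ge 2$ component is a smooth proper relative curve over $S$. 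The content to verify is that in this particular situation ($L_4 = U(3)$, character data $\ubar\chi_4$) the one-dimensional part of the fixed locus over a geometric point is always a single nonhyperelliptic genus-four curve with no vanishing theta-nulls; over $\cx$ this is exactly \cite{kondog4}, and in positive characteristic one deduces it by the spreading-out and specialization behavior of the fixed scheme, using that the Hodge-index-theoretic arguments of \cite{artebanisartitaki}, \cite{keum14} cited in Lemma \ref{lemfixed} are characteristic-free. This defines $\kappa_4$ on $S$-points by $(Z \to S, \alpha, \rho) \mapsto C_Z \to S$, where $C_Z$ is the genus-four component of $Z^{\mmu_3}$; functoriality in $S$ is immediate since the fixed-point-stack construction is.

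The next step is to check that $\kappa_4$ is a bijection on geometric points. For this I would construct, at the level of geometric points, an inverse: given a nonhyperelliptic genus-four curve $C/k$ with no vanishing theta constants ($6$ invertible in $k$), its canonical embedding realizes $C$ as the intersection $Q \cap S$ of a (necessarily smooth, by the theta-null hypothesis) quadric and a cubic in $\proj^3$; the triple cover $Z \to Q$ branched along $C$ then carries the $\mmu_3$-action $\rho$, and Kond\=o's computation shows $Z$ is a K3 surface with $\ns(Z) \supseteq \ang{N_1, N_2} \iso U(3)$ primitively, the transcendental/coinvariant lattice realizing the character $\chi_4$. I would argue that over $k = \cx$ this is the two-sided inverse statement already in \cite[Thm.~1]{kondog4}, and that surjectivity and injectivity propagate to all algebraically closed $k$ with $6 \in k^\times$ by a standard lift-and-specialize argument: both $\stack N_4$ and $\mk^\circ_{L_4,\ubar\chi_4}$ are of finite type over $\integ[\zeta_3,1/6]$ (the latter by the results of \S \ref{subsecstackk3}), a geometric point of either over $\overline{\ff}_p$ lifts to characteristic zero after a suitable base change (using smoothness — Proposition \ref{propmlsmooth}, Lemma \ref{lemdeformaut} — of the source, and smoothness of $\stack N_4$), and the constructions $C \rightsquigarrow Z$ and $Z \rightsquigarrow C_Z$ commute with such specialization because each is given by an explicit geometric recipe (a cyclic cover, respectively a fixed locus) that is compatible with base change.

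The main obstacle I expect is the positive-characteristic bookkeeping: namely verifying that for every geometric point in characteristic $p > 3$ the fixed locus $Z^{\mmu_3}$ has exactly the expected shape — one genus-four nonhyperelliptic curve and some isolated points, with no degeneration (no extra one-dimensional component, no collapse of the quadric $Q$ to a cone, no appearance of a vanishing theta-null) — and dually that the cover of a smooth quadric-cubic intersection is genuinely a K3 surface and not merely a surface with the right numerical invariants. The cleanest route is probably to bound the deformation spaces: since $\mk^\circ_{L_4,\ubar\chi_4}$ is smooth of relative dimension $m(\chi^\omega) - 1 = 9$ over $\integ[\zeta_3,1/6]$ and $\stack N_4$ is smooth of relative dimension $\dim \stack M_4 = 9$, a morphism between them that is an isomorphism on the characteristic-zero coarse spaces and respects the (flat, finite-type) integral structures must be quasi-finite and dominant on every fiber, and one then upgrades ``quasi-finite dominant between smooth equidimensional stacks, bijective in char $0$'' to ``bijective on all geometric points'' by a specialization/semicontinuity argument on the fibers of $\kappa_4$. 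I would also flag explicitly, as Kond\=o does, that one must stay on the open locus $\mk^\circ$ (ample polarization, i.e.\ the quadric smooth and $C$ non-hyperelliptic with no vanishing theta-null) for the two constructions to be mutually inverse; the boundary behavior is deferred to the full statement $(\dagger)$ and not needed here.
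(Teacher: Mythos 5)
Your overall architecture matches the paper's: define $\kappa_4$ via the $\mmu_3$-fixed locus, use Kond\=o's analysis over $\cx$, handle characteristic $p$ by lifting to a mixed-characteristic discrete valuation ring and specializing via Lemma \ref{lemfixed}(c), and produce the inverse on geometric points by the triple-cover construction. But there are two genuine gaps. The first concerns curves with a vanishing theta constant. You restrict the inverse construction to curves with no vanishing theta-null (so that the quadric $Q$ is smooth), identify $\mk^\circ_{L_4,\ubar\chi_4}$ with that locus, and defer the rest as ``boundary behavior'' to be handled by $(\dagger)$. This does not prove the lemma: $\stack N_4$ is the stack of \emph{all} nonhyperelliptic genus-four curves, Proposition \ref{propn4} deduces the coarse-moduli isomorphism \emph{from} this lemma (so the deferral is circular), and the locus you omit is not the boundary --- the complement of the image of the period map corresponds to hyperelliptic curves, not to vanishing theta-nulls. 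The paper's construction handles a curve with a vanishing theta constant by taking the minimal resolution of the triple cover of the quadric \emph{cone}; the resulting K3 surface still lies in $\mk^\circ_{L_4,\ubar\chi_4}$, and without this step surjectivity on geometric points fails.

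The second gap is non-hyperellipticity in positive characteristic. Being non-hyperelliptic is an open condition on the base of a family of smooth curves, so it is not inherited under specialization: knowing that the generic-fiber curve is non-hyperelliptic does not tell you that the special-fiber curve $C_s$ is. The paper supplies the missing argument: the quotient $Z_s/\mmu_3$ is a quadric (possibly a cone) in $\proj^3$, $C_s$ maps isomorphically onto its image there, and a genus-four curve lying on a quadric surface in $\proj^3$ cannot be hyperelliptic. Finally, your proposed ``cleanest route'' via dimension counting does not close either gap: a morphism of flat, finite-type, smooth, equidimensional stacks that is an isomorphism on the characteristic-zero fiber need not be surjective (or even dominant) on special fibers without some properness or finiteness input --- an open immersion missing a divisor in one closed fiber already defeats it --- so the lift-and-specialize argument together with the explicit inverse is the load-bearing part, and it must be carried out on all of $\stack N_4$.
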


\begin{proof}
Suppose $(Z \to S, \alpha, \rho) \in \mk_{L_4,\ubar\chi_4}^\circ(S)$.
In particular, $Z \to S$ is an algebraic space.  Let $B = Z^{\mmu_3}
\to S$ be the scheme of fixed points (Lemma \ref{lemfixed}).

We will show that every  fiber of $B \to S$ is a smooth,
projective, nonhyperelliptic curve of genus $4$.  Then $B$ is a scheme
over $S$, and the sought-for
functor $\kappa_4:\mk_{L_4,\ubar\chi_4}^\circ \to \stack N_4$ is then given by
$(Z\to S,\alpha,\rho) \mapsto (Z^{\mmu_3} \to S)$.

So, let $s$ be a point of $S$.  If $s$ has residue characteristic
zero, the proof of \cite[Thm.~1]{kondog4} shows that $B_s$ is a
smooth, projective, nonhyperelliptic curve of genus $4$.

If $s$ has positive characteristic $p\ge 5$, since $\mk_{L_4,\ubar\chi_4}^\circ$ is smooth over $\integ[\zeta_3,1/6]$,
$s$ lifts to characteristic zero.  More precisely, there exist a
mixed characteristic discrete valuation ring $A$, with general and
special fibers $\eta$ and $\circ$, and a point $P \in
\mk^\circ_{L_4,\ubar\chi_4}(\spec A)$ with $P_\circ = s$.   The
characteristic zero result for $B_\eta$, combined with the
specialization argument of Lemma \ref{lemfixed}(c), shows there is a
(necessarily unique) smooth projective curve $C_s$ of genus $4$ in
$B_s$.

Moreover, the quotient $Z_s/\mmu_4$ is a quadric (cone)
\cite[p.~389]{kondog4}, and $C_s$ maps isomorphically onto its image
in the quotient.  Insofar as $C_s$ is a genus $4$ curve lying on a
quadric surface in $\proj^3$, it is not hyperelliptic.

This defines the morphism $\mk^\circ_{L_4,\ubar\chi_4} \to \stack N_4$.  Now
let $k$ be an algebraically closed field in which $6$ is invertible.  The
construction at the beginning of this subsection -- modified to take a
minimal resolution, if necessary, to account for the impact of
vanishing theta characteristics -- gives a set-theoretic section to
$\mk^\circ_{L_4,\ubar\chi_4}(k) \to \stack N_4(k)$.
\end{proof}

We can finally explain the arithmetic origin of Kond\=o's observation
that $\stack N_4(\cx)$ is an arithmetic ball quotient.

\begin{proposition}
\label{propn4}
The tuple $( \mk^\circ_{L_4,\ubar\chi_4},  \stack N_4,
\shim^{(L_4,\ubar\chi_4)}, \kappa_4, \tau_{L_4,\ubar\chi_4})$
satisfies $(\dagger)$ over $\integ[\zeta_3,1/6]$.
\end{proposition}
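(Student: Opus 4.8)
The plan is to assemble $(\dagger)$ from the pieces already constructed. The diagram $(\dagger)$ requires three ingredients: a stack $\mk^\circ_{L_4,\ubar\chi_4}$ and morphisms $\kappa_4$ and $\tau_{L_4,\ubar\chi_4}$, all over $\integ[\zeta_3,1/6]$, with $\kappa_4$ inducing an isomorphism on coarse moduli spaces and $\tau_{L_4,\ubar\chi_4}$ inducing an open immersion on $\cx$-points. The stack $\mk^\circ_{L_4,\ubar\chi_4}$ was shown to be a smooth Deligne--Mumford stack over $\integ[\zeta_3,1/6]$ in \S\ref{subsecstackk3} (note $\Delta_{L_4}=\Delta_{U(3)}=9$ is a power of $3$, so $\integ[\zeta_3,1/6\Delta_{L_4}n] = \integ[\zeta_3,1/6]$ with $n=3$). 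The morphism $\kappa_4:\mk^\circ_{L_4,\ubar\chi_4}\to\stack N_4$ over $\integ[\zeta_3,1/6]$ was constructed in Lemma \ref{lemn4} and shown there to be a bijection on geometric points; since both stacks are smooth, hence normal, the discussion at the end of \S\ref{subsec:g4} (Zariski's main theorem) upgrades this to an isomorphism of coarse moduli spaces. So the only remaining work is to produce $\tau_{L_4,\ubar\chi_4}$ over $\integ[\zeta_3,1/6]$ and check it is an open immersion on complex points.

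For $\tau_{L_4,\ubar\chi_4}$, I would invoke Lemma \ref{lem:tauZ}(b): the period map extends to a morphism $\tau_{(L_4,\ubar\chi_4)}:\mk_{(L_4,\ubar\chi_4)}\to\shim^{(L_4,\ubar\chi_4)}$ of stacks over $\calo_{E(\ubar\chi_4)}[1/2\Delta(L_4)]$. Here $E(\ubar\chi_4) = \rat(\zeta_3)$ and $\Delta(L_4)=9$, so $\calo_{E(\ubar\chi_4)}[1/2\Delta(L_4)] = \integ[\zeta_3,1/6]$, exactly the required base. Restricting this morphism to the open substack $\mk^\circ_{L_4,\ubar\chi_4}\subset\mk_{(L_4,\ubar\chi_4)}$ gives the desired $\tau_{L_4,\ubar\chi_4}$. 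That $\tau_{L_4,\ubar\chi_4}$ induces an open immersion on $\cx$-points is the content of the complex period map being an open immersion, which for data $(L,\ubar\chi)$ is recorded in \S\ref{seccxperiod} (and for this specific datum is essentially Kond\=o's Theorem~1, as remarked after the proposition in the introduction); one just notes that the open immersion $\mk_{(L_4,\ubar\chi_4),\cx}\inject\til\orth^{(L_4,\ubar\chi_4)}(\integ)\backslash\xx^{L_4,\ubar\chi_4}\iso\shim^{(L_4,\ubar\chi_4)}_\cx$ restricts to an open immersion on the open subvariety $\mk^\circ_{L_4,\ubar\chi_4}(\cx)$.

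With these two morphisms in hand over the common base $\integ[\zeta_3,1/6]$, the diagram
\[
\xymatrix{
\mk^\circ_{L_4,\ubar\chi_4} \ar[d]^{\tau_{L_4,\ubar\chi_4}} \ar[r]^-{\kappa_4} & \stack N_4 \\
\shim^{(L_4,\ubar\chi_4)}
}
\]
is precisely the diagram of $(\dagger)$ from Definition \ref{defarithoccult}, with $\kappa_4$ an isomorphism on coarse moduli spaces and $\tau_{L_4,\ubar\chi_4}$ an open immersion on complex points, completing the proof.

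I expect this proof to be almost entirely bookkeeping: the real content was already done in Lemma \ref{lemn4} (the geometric bijection) and Lemma \ref{lem:tauZ} (the integral extension of the period map), which in turn rests on Proposition \ref{proprizov} (Rizov's descent for polarized K3s) and the smoothness input from Lemma \ref{lemdeformaut}. The one point requiring a moment's care is the arithmetic of the discriminant: one must verify that $\Delta_{L_4}$ is a power of the prime already inverted (here $3$), so that the base ring $\integ[\zeta_3,1/2\Delta_{L_4}\cdot 3]$ appearing in the general smoothness and extension statements collapses to the stated $\integ[\zeta_3,1/6]$ — this is where the specific choice $L_4 = U(3)$ enters. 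No genuine obstacle arises; the proposition is a packaging of the preceding machinery.
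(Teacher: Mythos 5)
Your proposal is correct and follows essentially the same route as the paper: $\kappa_4$ comes from Lemma \ref{lemn4} with the bijection on geometric points upgraded to an isomorphism of coarse moduli spaces, and $\tau_{L_4,\ubar\chi_4}$ comes from the complex open immersion (Kond\=o/Kudla--Rapoport, as packaged in \S\ref{subsecshimk3}) together with the descent and integral extension of Lemma \ref{lem:tauZ}. Your explicit check that $\Delta_{L_4}=9$ collapses the base ring to $\integ[\zeta_3,1/6]$ is a worthwhile detail the paper leaves implicit.
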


\begin{proof}
  This simply summarizes the foregoing.  Consider $\kappa_4$ from Lemma \ref{lemn4}.  Since it yields a bijection on
  geometric points, it induces
  an isomorphism of coarse moduli spaces.
For $\tau_{L_4,\ubar\chi_4}$,
  Kudla and Rapoport \cite[Thm.~8.1]{kroccult} interpret Kond\=o's
  isomorphism   \cite[Thm.~1]{kondog4} map as a morphism $\mk^\circ_{L_4, \ubar\chi_4,\cx} \to \shim^{(L_4,\ubar\chi_4)}_\cx$ (see \S \ref{subsecshimk3}).  Then
  Lemma \ref{lem:tauZ} shows that this map descends and spreads to
  $\integ[\zeta_3,1/6]$.
\end{proof}

\begin{remark}
\label{remKRg4}
In characteristic zero, Kudla and Rapoport use a transcendental
construction, and the fact that $\shim^{(L_4,\ubar\chi_4)}$ is a
moduli space for abelian varieties with action by $\integ[\zeta_3]$,
to interpret Kond\=o's construction as a morphism of stacks $\stack N_{4,\cx}
\to \shim^{(L_4,\ubar\chi_r)}_\cx$.  They then use a monodromy argument
  \cite[p.579]{kroccult} to
  show that this map descends to a morphism of stacks over
  $\rat(\zeta_3)$, and conjecture that it extends to a morphism over $\integ[\zeta_3]$.
\end{remark}

\begin{remark}
\label{rem:zheng}
Let $\mk^\circ_{L_4,\ubar\chi_4}\fatslash \mmu_3$ be the
rigidification of $\mk^\circ_{L_4,\ubar\chi_4}$ along $\mmu_3$
(\cite{abramovichcortivistoli}; see also \cite[\S 5]{romagny05}).  Then
$\mk^\circ_{L_4,\ubar\chi_4}\fatslash\mmu_3$ has the same coarse moduli
space as $\mk^\circ_{L_4,\ubar\chi_4}$, and the morphism of Proposition
\ref{propn4} factors as
\[
\xymatrix{
\mk^\circ_{L_4,\ubar\chi_4} \ar[r] & \mk^\circ_{L_4,\ubar\chi_4}\fatslash\mmu_3 \ar[r] & \stack N_4.
}
\]
In this notation, Kudla and Rapoport conjecture
\cite[Rem.~7.2]{kroccult} that the second map is an isomorphism of
stacks.  In fact, this has recently been resolved using transcendental
means by Zheng
\cite[Prop.~7.9]{zhengoccult}, who goes on to show that, at least over
$\cx$, there is an open immersion of orbifolds $\stack N_{4,\cx} \to
\shim^{(L_4,\ubar\chi_4)}_\cx$.  Zheng also proves analogous
statements in the situations of \S \ref{subsec:g3} and
\ref{subsec:cubsurf} below.
\end{remark}

\subsection{Curves of genus three}
\label{subsec:g3}

Kond\=o has given \cite{kondog3} a similar characterization of $\stack
N_3(\cx)$, the set of complex nonhyperelliptic curves of genus $3$.
This construction also descends to arithmetic geometry, as follows.

Let $C/k$ be a smooth, projective nonhyperelliptic curve of genus $3$
over an algebraically closed field in which $2$ is invertible.  Its
canonical model is a smooth, plane quartic curve; let $\varpi:Z \to
\proj^2$ be the cyclic quartic cover ramified along $C$. Then $Z$ is a
K3 surface, and inside
$\pic(Z)$ is a lattice $L_3 \iso A_1 \oplus A_1(-1)^{\oplus 7}$
\cite[p.~222]{kondog3}.  (Briefly, $Z$ is a double cover of $Y$,
itself a double cover of $\proj^2$ branched along $C$.  Then $Y$ is a
del Pezzo surface of degree two, and thus may also be obtained as the
blowup of a projective plane at seven points.  The seven copies of
$A_1(-1)$ in $\pic(Z)$ are obtained from lifts to $Z$ of the seven
exceptional divisors; the remaining element of modulus two is the
pullback of the class of a line on the projective plane.)  Then $L_3$
embeds primitively into $\lk$, with orthogonal complement $L_3^\perp
\iso U(2)^{\oplus 2} \oplus D_8(-1) \oplus A_1(-1)^{\oplus 2}$, and $d(L_3)=2$.

By construction, $Z$ comes equipped with an action by $\mmu_4$.  Then
$\mmu_4$ acts on the space of holomorphic two forms via a faithful
character, $\chi^\omega$.  The action $\rho$ of $\mmu_4$
on $L_3^\perp$ is given explicitly in \cite[\S 2]{kondog3}, and we  let $\chi_4$
be the character of $\rho \oplus \rho_{\triv}^{\oplus 8}$.

\begin{proposition}
\label{propn3}
There is a morphism $\kappa_3: \mk^\circ_{L_3,\ubar\chi_3} \to \stack N_3$ so
that $(\mk^\circ_{L_3,\ubar\chi_3}, \stack N_3, \shim^{(L_3,\ubar\chi_3)},
\kappa_3, \tau_{L_3,\ubar\chi_3})$ satisfies $(\dagger)$ over
$\integ[\sqrt{-1},1/2]$.
\end{proposition}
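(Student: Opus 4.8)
The plan is to run the genus-three case through exactly the same machinery assembled for genus four in Proposition \ref{propn4}, so that the proof is really a matter of verifying that the hypotheses of the general framework are met and citing the transcendental input of Kond\=o \cite{kondog3}. First I would construct the morphism $\kappa_3 \colon \mk^\circ_{L_3,\ubar\chi_3} \to \stack N_3$ by the recipe indicated before the statement: given $(Z \to S, \alpha, \rho) \in \mk^\circ_{L_3,\ubar\chi_3}(S)$, form the fixed-point scheme $B = Z^{\mmu_4} \to S$ (Lemma \ref{lemfixed}), and check fiberwise that $B_s$ is a smooth projective nonhyperelliptic curve of genus $3$. In residue characteristic zero this is \cite[Thm.]{kondog3}; in positive characteristic $p \nmid 2$, one lifts $s$ to characteristic zero using smoothness of $\mk^\circ_{L_3,\ubar\chi_3}$ over $\integ[\sqrt{-1},1/2]$ (Lemma \ref{lemdeformaut}), applies the characteristic-zero result on the generic fiber of the lift, and specializes via Lemma \ref{lemfixed}(c) to produce the unique genus-$3$ curve $C_s$ in $B_s$. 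Nonhyperellipticity in positive characteristic should follow, as in genus four, from the geometry of the quotient: here $Z_s/\mmu_4$ (or the intermediate quotient $Z_s/\mmu_2$, the degree-two del Pezzo) realizes $C_s$ as a smooth plane quartic in $\proj^2$, hence non-hyperelliptic. A set-theoretic section in the other direction comes from the double-cover-of-double-cover construction reviewed before the statement, modified by a minimal resolution if needed, so $\kappa_3$ is a bijection on geometric points and therefore induces an isomorphism of coarse moduli spaces.

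For the period map $\tau_{L_3,\ubar\chi_3}$, I would first observe that $L_3 = A_1 \oplus A_1(-1)^{\oplus 7}$ embeds primitively in $\lk$ of signature $(1,7)$, with $d(L_3) = 2$, so that there is a closed immersion $\mk_{L_3} \inject \mk_{\ang{2}}$ of smooth stacks over $\integ[1/2]$ and the lattice $L_3^\perp \iso U(2)^{\oplus 2} \oplus D_8(-1) \oplus A_1(-1)^{\oplus 2}$ carries the $\mmu_4$-action of character $\chi_3$ written down in \cite[\S 2]{kondog3}. Since $n = 4 \ge 3$ and $\chi^\omega$ is imaginary with reflex field $E(\ubar\chi_3) = \rat(\zeta_4) = \rat(\sqrt{-1})$, the target $\shim^{(L_3,\ubar\chi_3)} = \shim_{\calo_{\rat(\sqrt{-1})},L_3^\perp}$ is an integral canonical model of a unitary ball quotient over $\calo_{\rat(\sqrt{-1})}[1/\Delta(K)\Delta_{L_3}]$ (\S \ref{subsecunitaryshim}). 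Kudla and Rapoport \cite[\S 9]{kroccult} interpret Kond\=o's complex isomorphism as a morphism $\mk^\circ_{L_3,\ubar\chi_3,\cx} \to \shim^{(L_3,\ubar\chi_3)}_\cx$ inducing an open immersion on $\cx$-points; then Lemma \ref{lem:tauZ}(b) descends this to $E(\ubar\chi_3)$ and spreads it integrally to $\calo_{\rat(\sqrt{-1})}[1/2\Delta(L_3)]$. One must check that $2\Delta(L_3)$ is supported only at $2$ — i.e. that $\Delta_{L_3^\perp}$ is a power of $2$ — which is immediate from the explicit form of $L_3^\perp$ above, since $U(2)$ has discriminant $4$, $D_8(-1)$ has discriminant $4$, and $A_1(-1)$ has discriminant $2$. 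Hence everything is defined over $\integ[\sqrt{-1},1/2]$, and assembling the triangle $\kappa_3$, $\tau_{L_3,\ubar\chi_3}$ verifies $(\dagger)$.

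I expect the only genuinely delicate point to be nonhyperellipticity of $B_s$ in positive characteristic (and, relatedly, confirming that $\mk^\circ_{L_3,\ubar\chi_3}$ is indeed nonempty and smooth so that the lifting step is legitimate). The specialization argument of Lemma \ref{lemfixed}(c) produces a smooth genus-$3$ curve in the fixed locus, but one needs the description of the quotient $Z_s/\mmu_2$ as a degree-two del Pezzo surface — or equivalently of $Z_s/\mmu_4$ — to persist in characteristic $p \ge 3$; this is essentially the content of the lattice-theoretic description of the construction in \cite{kondog3}, and I would cite it in the same spirit that \cite[p.~389]{kondog4} is cited in the proof of Lemma \ref{lemn4}. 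The remaining ingredients — representability and smoothness of $\mk_{L_3,\ubar\chi_3}$, the bijection-implies-coarse-isomorphism principle, and the integral extension of the period map — are all quoted verbatim from the earlier sections, so beyond the geometry of the cyclic quartic cover there is little new to prove.
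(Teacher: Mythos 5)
Your proposal is correct and reaches the conclusion by essentially the same architecture as the paper --- $\kappa_3$ is the fixed-point functor $(Z\to S,\alpha,\rho)\mapsto Z^{\mmu_4}$, the inverse on geometric points is the cyclic quartic-cover construction, and $\tau_{L_3,\ubar\chi_3}$ is a special case of Lemma \ref{lem:tauZ} --- but the two arguments diverge on the one genuinely geometric point, namely why the fixed locus is a nonhyperelliptic genus-$3$ curve in positive characteristic. You transplant the proof of Lemma \ref{lemn4} verbatim: lift the point to characteristic zero using smoothness of the moduli stack, invoke Kond\=o's complex result on the generic fiber, specialize the genus-$3$ component via Lemma \ref{lemfixed}(c), and then handle nonhyperellipticity through the description of the quotient of $Z_s$ as a projective plane realizing $C_s$ as a smooth plane quartic. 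The paper instead cites \cite[p.~225]{kondog3} together with the \'etale Lefschetz fixed point theorem: since the character $\ubar\chi_3$ of the $\mmu_4$-action on $H^2$ is prescribed and independent of the characteristic, the Euler characteristic of the fixed locus can be computed directly in characteristic $p$, which pins down the genus of the unique non-rational component of $Z^{\mmu_4}_s$ without any lifting. Both routes are sound; yours has the virtue of uniformity with the genus-$4$ case but, as you yourself flag, leans on the persistence of the quotient description in characteristic $p$ (which in either approach is still what ultimately rules out hyperellipticity), while the Lefschetz computation obtains the genus intrinsically. Your added verification that $\Delta_{L_3}$ and $\Delta_{L_3^\perp}$ are powers of $2$, so that Lemma \ref{lem:tauZ}(b) really lands in $\integ[\sqrt{-1},1/2]$, is a detail the paper leaves implicit and is worth recording.
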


\begin{proof}
As in Lemma
\ref{lemn4}, the map $\kappa_3$ is given by $(Z\to S, \alpha,\rho)
\mapsto
(Z^{\mmu_4} \to S)$; \cite[p.~225]{kondog3} and the \'etale Lefschetz
fixed point theorem [SGA 5.III.(4.11.3)] provide the necessary
geometric input to show that $Z^{\mmu_4}$ is a relative
nonhyperelliptic curve of genus $3$.  The fact that $\kappa_3$ gives a
bijection on geometric points is
established by the construction at the beginning of this section.
The asserted behavior of  $\tau_{L_3,\ubar\chi_3}$ is a special case of
Lemma \ref{lem:tauZ}.
\end{proof}

\begin{remark}
See \cite[\S 7]{kroccult} for earlier results over $\rat(\sqrt{-1})$.
\end{remark}

\subsection{Curves of genus six}
\label{subsec:g6}

Following \cite{artebanikondo11}, let $\stack N_6$ denote the moduli
stack (over $\integ[1/2]$) of non-special curves of genus $6$.  (Thus, a curve is
represented by a point in $\stack N_6$ if it is smooth, projective and
irreducible of genus $6$, and neither hyperelliptic, trigonal,
bielliptic, nor smooth quintic and planar.)

In fact, let $C/k$ be a non-special curve over an algebraically closed
field.  The canonical embedding of $C$ is a quadric section of a
unique quintic del Pezzo surface $Y$ in $\proj^5$.  Let $Z$ be the
double cover of $Y$ branched along $C$.  (If $C$ has fewer than five
$g_6^2$'s, then one must actually take the minimal resolution of this
cover.)    Then $Z$ is a K3 surface with an action by
$\mmu_2 = \st{\pm 1}$, and this action fixes  a lattice in $\pic(Z)$
isomorphic to $L_6 := A_1\oplus A_1(-1)^4$ \cite[\S
2.1]{artebanikondo11}.  (Note that $d(L_6)=2$.)

\begin{proposition}
\label{propn6}
There is a morphism $\kappa_6:\mk^\circ_{L_6,\ubar\chi_6} \to \stack N_6$ such
that $(\mk^\circ_{L_6,\ubar\chi_6}, \stack N_6, \shim^{(L_6,\ubar\chi_6)},
\kappa_6, \tau_{L_6,\ubar\chi_6})$ satisfies $(\dagger)$ over
$\integ[1/6]$.
\end{proposition}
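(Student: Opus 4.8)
The plan is to mimic the proof of Proposition~\ref{propn4} almost verbatim, using the del Pezzo surface of degree $5$ in place of the quadric-cubic intersection. The two ingredients are the construction of $\kappa_6$ as a map of stacks that is a bijection on geometric points, and the verification of the hypotheses of $(\dagger)$; the second of these is immediate from the work already done, so the real content is the first.

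To construct $\kappa_6$, suppose $(Z\to S,\alpha,\rho)\in\mk^\circ_{L_6,\ubar\chi_6}(S)$. As in Lemma~\ref{lemn4}, I would set $B=Z^{\mmu_2}\to S$, the scheme of fixed points (Lemma~\ref{lemfixed}), and show that every geometric fiber of $B\to S$ is a smooth, projective, non-special curve of genus~$6$. In residue characteristic zero, this is exactly the content of \cite{artebanikondo11}: the $\mmu_2$-quotient $Z_s/\mmu_2$ recovers the quintic del Pezzo surface $Y_s$, and the branch curve $C_s\subset B_s$ maps isomorphically onto its canonical model, a quadric section of $Y_s$, which forces $C_s$ to be non-special of genus~$6$. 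In positive characteristic $p\ge 5$ (recall $6$ is inverted), I would use that $\mk^\circ_{L_6,\ubar\chi_6}$ is smooth over $\integ[1/6]$ to lift a point $s$ to a point $P\in\mk^\circ_{L_6,\ubar\chi_6}(\spec A)$ over a mixed-characteristic DVR $A$; then the characteristic-zero analysis of the generic fiber, combined with the specialization statement Lemma~\ref{lemfixed}(c), produces a smooth proper relative curve $C\subset B$ whose special fiber $C_s$ is the desired genus~$6$ curve. The non-specialness of $C_s$ follows, as in characteristic zero, from its realization as a quadric section of the quintic del Pezzo surface $Z_s/\mmu_2$ in $\proj^5$. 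This defines $\kappa_6:\mk^\circ_{L_6,\ubar\chi_6}\to\stack N_6$. For surjectivity on geometric points, the construction recalled at the start of \S\ref{subsec:g6} (double cover of the quintic del Pezzo surface branched along the canonical curve, with a minimal resolution when fewer than five $g^2_6$'s are present) produces a set-theoretic section over any algebraically closed field in which $6$ is invertible; injectivity follows because $Z$, together with its $\mmu_2$-action and lattice polarization, is determined by $C$.

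Once $\kappa_6$ is known to be a bijection on geometric points between smooth stacks, it induces an isomorphism of coarse moduli spaces by Zariski's main theorem, as noted at the end of \S5.1. For the period map, one uses that $\shim^{(L_6,\ubar\chi_6)}_\cx$ is the relevant arithmetic quotient and that the transcendental period map $\tau_{L_6,\ubar\chi_6,\cx}$ of \cite{artebanikondo11} lands in it (the Shimura-variety interpretation being as in \S\ref{subsecshimk3}); then Lemma~\ref{lem:tauZ}(b) shows this map descends and spreads out to a morphism $\tau_{L_6,\ubar\chi_6}:\mk_{L_6,\ubar\chi_6}\to\shim^{(L_6,\ubar\chi_6)}$ over $\integ[1/6]$ (noting $E(\ubar\chi_6)=\rat$ since $n=2$, and $2\Delta_{L_6}$ divides a power of $6$, as $d(L_6)=2$; one should double-check the precise ring of definition, but inverting $6$ suffices). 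Assembling these facts gives the diagram $(\dagger)$ over $\integ[1/6]$.

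The main obstacle I anticipate is the positive-characteristic fixed-locus analysis: unlike the genus~$4$ case, the del Pezzo surface of degree~$5$ enters the picture, and one must confirm that the quotient $Z_s/\mmu_2$ really is a (possibly singular) quintic del Pezzo surface and that the fixed curve embeds as a quadric section, so that the genus and non-specialness come out correctly and uniformly in $p$. This should follow from Lemma~\ref{lemfixed} together with the specialization-from-characteristic-zero argument, exactly as in Lemma~\ref{lemn4}, but it does require checking that the minimal-resolution subtlety (for curves with fewer than five $g^2_6$'s) behaves well in families and in mixed characteristic. A minor secondary point is to make sure the \'etale-cohomological input identifying the $\mmu_2$-action matches the lattice-theoretic data $(L_6,\ubar\chi_6)$ integrally, which is handled by the rigidity of $\mmu_2$-representations over $\integ[1/6]$ recorded in \S\ref{subseccatk3}.
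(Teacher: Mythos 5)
Your proposal is correct and follows the same overall skeleton as the paper: define $\kappa_6$ by $(Z\to S,\alpha,\rho)\mapsto Z^{\mmu_2}\to S$, check it is a bijection on geometric points (hence an isomorphism of coarse spaces by normality and Zariski's main theorem), and invoke Lemma~\ref{lem:tauZ} for $\tau_{L_6,\ubar\chi_6}$. The one place where you genuinely diverge is the positive-characteristic analysis of the fixed locus: you import the lifting-to-characteristic-zero plus specialization argument from Lemma~\ref{lemn4} (mixed-characteristic DVR, Lemma~\ref{lemfixed}(c), then identifying $C_s$ as a quadric section of the quintic del Pezzo quotient), whereas the paper simply observes that the fixed-point computation of Artebani--Kond\=o at the relevant place is already valid in any characteristic, so no lifting is needed. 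Your route is self-contained relative to the genus-$4$ template but, as you yourself flag, obliges you to control the quotient surface and the minimal-resolution subtlety in families and in mixed characteristic; the paper's citation sidesteps exactly that work. One small slip: your justification that $2\Delta_{L_6}$ is invertible in $\integ[1/6]$ ``as $d(L_6)=2$'' conflates $d(L_6)$ with the discriminant; the correct point is that $\Delta_{L_6}=\#\disc(A_1\oplus A_1(-1)^{\oplus 4})=2^5$, a power of $2$, so inverting $6$ indeed suffices.
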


\begin{proof}
As before, $\kappa_6$ is given by sending $(Z\to S, \alpha,\rho) \in
\mk^\circ_{L_6,\ubar\chi_2}(S)$ to its fixed locus $Z^{\mmu_2} \to S$
(see \cite[p.~1452]{artebanikondo11} for the argument, valid in any
characteristic, that each geometric fiber $Z^{\mmu_2}_{\bar s}$ is a
non-special curve of genus $6$). The construction described  above gives
a set-theoretic section on geometric points, and
$\tau_{L_6,\ubar\chi_6}$ is supplied by Lemma \ref{lem:tauZ}.
\end{proof}

\subsection{Five points on a line}
\label{subsec:5points}

Kond\=o has also explained how, in favorable cases, one can associate a structured K3 surface to certain configuration spaces of points.

For instance, as in \cite{kondo5points}, consider the moduli space  $\tilstack M_{0,5}$ of five distinct, ordered points in $\proj^1$.  (In fact, our discussion extends to the case of \emph{stable} configurations of points.)

Fix an embedding $\beta: \proj^1\inject \proj^2$ as a coordinate line, and let $Q_\infty \in \proj^2$ denote a point ``at infinity'' which is not contained in $\beta(\proj^1)$.

Initially, let $k$ be an algebraically closed field, and let $(P_1,
\cdots, P_5) \in \tilstack M_{0,5}(k)$ be an ordered 5-tuple of distinct
points.  Following \cite[Sec.~3.1-3.2]{kondo5points}, let $C$ be the
cyclic degree five cover of $\proj^1$ ramified
exactly at $P_1, \cdots, P_5$.  It naturally admits a model as a plane
curve inside $\proj^2$, intersecting $\beta(\proj^1)$ exactly at
$Q_i := \beta(P_i)$ for $i = 1, \cdots,
5$. Let $L_i$ denote the line connecting $Q_i$ and $Q_\infty$; and let
$E_0 = \beta(\proj^1)$.

Next, let $X$ be the minimal resolution of the double cover of
$\proj^2$ branched along the sextic plane curve $C+E_0$, with covering
involution $\tau$.  The degree five automorphism of $C$ also
induces a degree five automorphism $\sigma$ of $X$.  Moreover, because
of our construction, we can identfy certain divisor (classes) on $X$
which are fixed by $\tau$.

Indeed, for $1 \le i \le 5$, there is an exceptional curve $E_i$ of
the minimal resolution of singularities corresponding to $Q_i \in
C\cap E_0$.  Moreover, the inverse image of  $L_i$ in $X$ is the union
of two smooth rational curves $F_{i,-}$ and $F_{i,+}$, which pass
(respectively) through the two points
$R_-$ and $R_+$ of $X$ lying over $Q_\infty$.  The involution $\tau$ exchanges
$F_{i,-}$ and $F_{i,+}$, and $\sigma$ stabilizes each of $F_{i,-}$ and
$F_{i,+}$.

Finally, there is a sixteenth tautological cycle on $X$, namely, the
inverse image $E_{0,X}$ of $E_0$ in $X$.  It is stable under $\sigma$
and $\tau$.

Let $L_5$ be the lattice generated by these sixteen divisors,
equipped with the intersection pairing.  Then $L_5 \iso V \oplus
A_4(-1) \oplus A_4(-1)$ \cite[Lemma
4.2]{kondo5points}, and $d(L_5)=2$.    The labelling of the original points, combined with the labelling of
the points in $X$ over $Q_\infty$, yields an inclusion $L_5 \inject
\pic(X)$.

There is an embedding $L_5 \inject \lk$. The
orthogonal complement of $L_5$ is computed in \cite[\S
4.3]{kondo5points}, and a structure $\rho$ of $L_5^\perp$ as a
$\mmu_5$-representation is described in \cite[\S 5.2]{kondo5points}.
Let $\chi_5$ be the character of $\rho\oplus \rho_{\triv}^{\oplus
  10}$.  Then $X$ is represented by a $k$-point of $\mk^\circ_{L_5,\chi_5}$.  Conversely, we have:

\begin{lemma}
\label{lem5points}
There is a morphism $\kappa_5:  \mk^\circ_{L_5,\ubar\chi_5} \to \tilstack
M_{0,5}$ so that $(\mk^\circ_{L_5,\ubar\chi_5},\tilstack M_{0,5},
\shim^{(L_5,\ubar\chi_5)}, \kappa_5,\tau_{L_5,\ubar\chi_5})$ satisfies
$(\dagger)$ over $\integ[\zeta_5,1/10]$.
\end{lemma}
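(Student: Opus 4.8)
The argument mirrors the proofs of Lemma~\ref{lemn4} and Proposition~\ref{propn3}: one builds $\kappa_5$ out of the lattice‑polarization data, checks it is a bijection on geometric points, and then reads off the behaviour of $\tau_{L_5,\ubar\chi_5}$ from Lemma~\ref{lem:tauZ}. To construct $\kappa_5$, recall that a basis of $L_5\iso V\oplus A_4(-1)^{\oplus 2}$ distinguishes the sixteen tautological classes on $X$, among them the class $E_{0,X}$ and the labelled classes $E_1,\dots,E_5$ of \S\ref{subsec:5points}. For $(X\to S,\alpha,\rho)\in\mk^\circ_{L_5,\ubar\chi_5}(S)$, each of $\alpha(E_{0,X}),\alpha(E_1),\dots,\alpha(E_5)$ is, in every geometric fibre, the class of a rigid smooth rational curve (self‑intersection $-2$), and so cuts out a closed subscheme of $X$ which is flat over $S$; write $\Pi$ for the one attached to $E_{0,X}$ and $F_i$ for the one attached to $E_i$. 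In each fibre $\Pi$ is a smooth rational curve and $\Pi\cap F_i$ is a single reduced point \cite[Lemma 4.2]{kondo5points}, so $\varsigma_i:=\Pi\cap F_i$ is a section $S\to\Pi$; being a Brauer--Severi curve with a section, $\Pi\to S$ is a trivial $\proj^1$‑bundle. Over $\cx$ the $\varsigma_i$ are pairwise disjoint and $(\Pi;\varsigma_1,\dots,\varsigma_5)$ is the original configuration \cite{kondo5points}. For a geometric point $s$ of residue characteristic $p\nmid 10$, smoothness of $\mk^\circ_{L_5,\ubar\chi_5}$ over $\integ[\zeta_5,1/10]$ (Lemma~\ref{lemdeformaut}, which asks only that $2\cdot 5^{3}$ be invertible) lets us choose a mixed‑characteristic discrete valuation ring $A$ and a point of $\mk^\circ_{L_5,\ubar\chi_5}(\spec A)$ reducing to $s$; since the recovery of $(\Pi;\varsigma_i)$ from the $(-2)$‑classes is compatible with specialization (upper semicontinuity of cohomology and of fibre dimension, as in Lemma~\ref{lemfixed} and \cite[IV.13.1.3]{ega43}), the characteristic‑zero statement propagates, and in particular the $\varsigma_i$ remain pairwise disjoint over $s$. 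Thus $\kappa_5\colon(X\to S,\alpha,\rho)\mapsto(\Pi;\varsigma_1,\dots,\varsigma_5)$ is a morphism $\mk^\circ_{L_5,\ubar\chi_5}\to\tilstack M_{0,5}$ of stacks over $\integ[\zeta_5,1/10]$.

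That $\kappa_5$ is a bijection on geometric points is immediate: over any algebraically closed field in which $10$ is invertible, the construction recalled at the start of \S\ref{subsec:5points} -- the cyclic quintic cover of $\proj^1$, the double cover of $\proj^2$ along the sextic $C+E_0$ and its minimal resolution, carrying its canonical $\mmu_5$‑action and its canonical labelled embedding $L_5\inject\pic(X)$ -- produces a point of $\mk^\circ_{L_5,\ubar\chi_5}$ mapping to the prescribed configuration, and the same construction reconstructs $(X,\alpha,\rho)$ from $\kappa_5(X,\alpha,\rho)$. Hence $\kappa_5$ induces an isomorphism of coarse moduli spaces. Finally, by the lemma of \S\ref{subsecshimk3} the period map $\mk_{L_5,\ubar\chi_5,\cx}\to\shim^{(L_5,\ubar\chi_5)}_\cx$ is a holomorphic open immersion; here $n=5\ge 3$ and, by Kondō's computation \cite[\S 4.3, \S 5.2]{kondo5points}, $m(\chi^\omega)=3$, so that $\xx^{L_5,\ubar\chi_5}\iso\ball^{2}$ and, with $E(\ubar\chi_5)=\rat(\zeta_5)$, one has $\shim^{(L_5,\ubar\chi_5)}=\shim_{\calo_{E(\ubar\chi_5)},L_5^\perp}$. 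Its restriction to $\mk^\circ_{L_5,\ubar\chi_5,\cx}$ is again an open immersion, and by Lemma~\ref{lem:tauZ}(b) the map descends and extends to $\tau_{L_5,\ubar\chi_5}\colon\mk_{L_5,\ubar\chi_5}\to\shim^{(L_5,\ubar\chi_5)}$ over $\calo_{E(\ubar\chi_5)}[1/2\Delta(L_5)]$; since $\Delta(L_5)=5^{3}$, this ring is $\integ[\zeta_5,1/10]$. Assembling the three maps yields $(\dagger)$ over $\integ[\zeta_5,1/10]$.

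The heart of the matter is the first paragraph -- transcribing Kondō's fibrewise, transcendental recipe into an honest morphism of stacks over $\integ[\zeta_5,1/10]$, i.e.\ checking that the line $\Pi$ and its five labelled points are cut out functorially and integrally by the $(-2)$‑classes $\alpha(E_{0,X}),\alpha(E_i)$, and in particular that the five points stay distinct after reduction modulo every $p\nmid 10$. This is the exact analogue, for configurations of points rather than curves, of the lifting‑and‑specialization step (Lemma~\ref{lemfixed}) that governs the genus‑three, ‑four and ‑six cases in \S\ref{subsec:g3}--\ref{subsec:g6}; I expect nothing beyond careful bookkeeping, since all the geometry in sight is the minimal resolution of an explicit plane double cover.
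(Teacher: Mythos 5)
Your overall architecture (build $\kappa_5$, check bijectivity on geometric points, quote Lemma \ref{lem:tauZ} for $\tau_{L_5,\ubar\chi_5}$, with the discriminant bookkeeping $\Delta_{L_5}=5^3$ giving the ring $\integ[\zeta_5,1/10]$) matches the paper, but your construction of $\kappa_5$ takes a genuinely different route, and that route has a gap. The paper does \emph{not} try to cut out the line and its five points directly from the $(-2)$-classes $\alpha(E_{0,X}),\alpha(E_1),\dots,\alpha(E_5)$. Instead it first produces the covering involution $\beta\in\aut_{Z/S}(S)$ (from Kond\=o's Lemma 5.7 in characteristic zero, then by specializing the automorphism), observes that $Z^{\beta}$ is a disjoint union of a relative genus-$6$ curve $C$ and a rational curve, restricts the $\mmu_5$-action to $C$, and outputs $(C^{\mmu_5}\subset C/\mmu_5)$ with the labelling of the fixed sections read off from the classes $E_1,\dots,E_5$. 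The point of going through fixed loci is that Lemma \ref{lemfixed}(b) guarantees every geometric fiber of $Z^{G}$ is \emph{smooth}, so the specialization step (Lemma \ref{lemfixed}(c)) only has to identify which component is the genus-$6$ curve; no irreducibility of a divisor class ever has to be controlled.

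The gap in your version is precisely at the step you flag as ``careful bookkeeping'': the assertion that in \emph{every} geometric fiber, in particular after reduction modulo $p\nmid 10$, the classes $\alpha(E_{0,X})$ and $\alpha(E_i)$ are represented by irreducible smooth rational curves, so that $\Pi\to S$ is a relative $\proj^1$ and $\Pi\cap F_i$ is a section. Upper semicontinuity of cohomology and of fiber dimension do not give this: the unique effective representative of a $(-2)$-class (unique because $h^0=1$) can perfectly well be a \emph{reducible} configuration of $(-2)$-curves in a special fiber where the Picard number jumps (e.g.\ over a supersingular point), in which case $\Pi$ is not a smooth relative curve and the five intersection points need not remain distinct and reduced. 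Over $\cx$ irreducibility follows from the surjectivity of Kond\=o's construction via the Torelli theorem, but in positive characteristic that surjectivity is part of what is being established, so invoking it here is circular. To repair your argument you would need either a lattice-theoretic argument (specific to $L_5$ and the chosen ample class) excluding such decompositions, or to retreat to the paper's route through the fixed locus of $\beta$ and of $\mmu_5$, where smoothness of the fibers is automatic. The remainder of your write-up --- the inverse on geometric points, the identification $\shim^{(L_5,\ubar\chi_5)}=\shim_{\calo_{\rat(\zeta_5)},L_5^\perp}$ with $m(\chi^\omega)=3$, and the descent/extension via Lemma \ref{lem:tauZ}(b) --- agrees with the paper.
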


\begin{proof}
As usual, it suffices to describe $\kappa_5$.  The
previous construction gives the desired inverse on geometric points,
and thus we have an induced isomorphism of coarse moduli spaces.

Suppose $(Z \to S, \iota,\alpha) \in \mk^\circ_{L_5,\ubar\chi_5}(S)$.
Then there is also an involution $\beta \in \aut_{Z/S}(S)$.  (In
characteristic zero, this is described in the last paragraph of the
proof of \cite[Lemma 5.7]{kondo5points}; in positive characteristic,
this then follows from a specialization argument.)  The fixed locus
$Z^{\beta}$ is the disjoint union of a curve $C \to S$ with each fiber
smooth and projective of genus $6$ (use \emph{loc. cit.} and Lemma
\ref{lemfixed}(c)) and a relative rational curve.  Moreover, the
action of $\mmu_5$ on $Z$ restricts to an action of $\mmu_5$ on $C$.
The lattice polarization, in particular the numbering of the cycles
$E_1, \cdots, E_5$, labels the fixed sections $C^{\mmu_5}\to S$.  The
quotient curve $C/\mmu_5 \to S$ has fibers of genus zero, and the
sought-for configuration is $(C^{\mmu_5} \subset C/\mmu_5) \in
\tilstack M _{0,5}(S)$.
\end{proof}

In this case, diagram $(\dagger)$ is part of a larger diagram of
moduli stacks.  By its construction, the map $\kappa_5:
\mk^\circ_{L_5,\ubar\chi_5} \to
\tilstack M_{0,5}$ factors through $\tilstack H(5,1,5)$.  Now, if $(C_S
\to\proj^1_S) \in \stack H(5,1,5)(S)$, then $\pic^0_{C/S}$ has an action
by $\integ[\zeta_5]$, of signature  \eqref{eq:defsig}
$\Sigma = \st{(2,1),(0,3)}$.  Inside $\stack A_6$ we have $\stack
A_{\integ[\zeta_5],\Sigma}$, the locus of principally polarized
abelian 6-folds with an action by $\integ[\zeta_5]$ of signature
$\Sigma$.  Consider the classical Torelli map
$\tau_6: \stack M_6 \to \stack A_6$.  The image of the restriction to
$\stack H(5,1,5)$ of $\tau_6$ is open in $\stack
A_{\integ[\zeta_5],\Sigma}$.

Of course, $\stack
A_{\integ[\zeta_5],\Sigma}$ is a Shimura variety in its own right. The
complex-analytic uniformization of $\stack A_{\integ[\zeta_5],\Sigma}$
is worked out in detail in \cite[Case (5)]{shimura64}.   Let $G =
G_{\integ[\zeta_5],L_5^\perp}$, and let $\xx_G$ be the corresponding
Hermitian symmetric domain; it is isomorphic to the unit $2$-ball
$\mathbb B^2$.  There is a compact open subgroup $\kk_0 \subset
G(\aff_f)$ such that $\stack A_{\integ[\zeta_5],\Sigma}\iso
\shim_{\kk_0}[G,\xx_G]$.

(Briefly, let $M = \integ[\zeta_5]^{\oplus
  3}$, endowed with the Hermitian form $h$ represented by
$\operatorname{diag}(1,1, \half{1-\sqrt 5})$.  The unitary group of
$(M,h)$ is an integral form of $G$, and $\kk_0$ is the stabilizer of
the lattice $M$.  Conversely, $\kk^{(L_5,\ubar\chi_5)}$ can be
recovered from $\kk_0$ as those group elements which act trivially on
the discriminant group of $L$.)

   Then $\kk_0 \supset
\kk^{(L_5,\ubar\chi_5)}:=\kk_{\integ[\zeta_5],L_6^\perp}$, with
quotient group $\kk_0/\kk^{(L_5,\ubar\chi_5)} \iso
\orth(\disc(L^\perp)) \iso \orth_3(\ff_5) \iso \st{\pm 1} \times S_5$.

We summarize this discussion in:

\begin{proposition}
There is a diagram of stacks over $\integ[\zeta_5,1/10]$:
\[
\xymatrixcolsep{5pc}
\xymatrix{
\mk^\circ_{L_5,\ubar\chi_5} \ar@/^3ex/^{\kappa_5}[rr] \ar[r] \ar[dd]^{\tau_{L_5,\ubar\chi_5}} &
\tilstack H(5,1,5) \ar[r] \ar[d]^{[S_5]} &
\tilstack M_{0,5} \ar[d]^{[S_5]} \\
&\stack H(5,1,5)\ar[r] \ar[d]^{[\mmu_2]} & \stack M_{0,5} \\
\shim^{(L_5,\ubar\chi_5)} \ar[r]^{[\kk_0/\kk^{(L_5,\ubar\chi_5)}]} &
\stack A_{\integ[\zeta_5],\Sigma} &
}
\]
where an arrow is labelled $[\Gamma]$ if it is a quotient by the
finite group $\Gamma$; the given factorization of $\kappa_5$ is, on
coarse moduli spaces, a composition of isomorphisms; and
$\tau_{L_5,\ubar\chi_5,\cx}$ is an open immersion.
\end{proposition}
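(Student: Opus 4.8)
The plan is to assemble the claimed diagram out of pieces that have already been constructed, checking only the commutativity and the asserted properties of the labelled arrows. The top row and the two vertical maps labelled $[S_5]$ come directly from the discussion of $\tilstack H(5,1,5)$ and $\tilstack M_{0,5}$: the forgetful map $\tilstack H(5,1,5)\to \tilstack M_{0,5}$ (sending a labelled cyclic cover to its branch locus) and its quotient by the symmetric group $S_5$ acting on the labels, giving $\stack H(5,1,5)\to \stack M_{0,5}$. That $\kappa_5$ factors through $\tilstack H(5,1,5)$ is exactly how $\kappa_5$ was built in Lemma \ref{lem5points}: the construction produces the labelled branched cover $C/\mmu_5\to S$ before passing to the configuration, so the curved arrow $\kappa_5$ is literally the composition along the top row. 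The right-hand map $\tilstack M_{0,5}\to\stack M_{0,5}$ is the $S_5$-quotient, and commutativity of the upper square is the $S_5$-equivariance already noted in \S\ref{subsec:g4}.

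First I would treat the map $[\mmu_2]:\stack H(5,1,5)\to \stack A_{\integ[\zeta_5],\Sigma}$. By the discussion preceding the proposition, if $(C_S\to\proj^1_S)\in\stack H(5,1,5)(S)$ then $\pic^0_{C/S}$ carries an action of $\integ[\zeta_5]$ of signature $\Sigma=\{(2,1),(0,3)\}$, and Torelli gives a principally polarized abelian $6$-fold; this is the functor. The $\mmu_2$-rigidification is because the hyperelliptic-type involution on a $\mmu_5$-cover of $\proj^1$ acts trivially on the Jacobian-with-$\integ[\zeta_5]$-structure, so the map factors as $\stack H(5,1,5)\to \stack H(5,1,5)\fatslash\mmu_2\to \stack A_{\integ[\zeta_5],\Sigma}$ with the second arrow an open immersion onto its image (as recalled: the image of $\tau_6$ restricted to $\stack H(5,1,5)$ is open in $\stack A_{\integ[\zeta_5],\Sigma}$), and in particular an isomorphism on coarse moduli spaces. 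Next I would identify $\stack A_{\integ[\zeta_5],\Sigma}$ with $\shim_{\kk_0}[G,\xx_G]$ using \S\ref{subsecunitaryshim} and the explicit Hermitian lattice $(M,h)=\mathrm{diag}(1,1,\tfrac{1-\sqrt5}{2})$ described in the parenthetical remark, and then produce the bottom arrow $[\kk_0/\kk^{(L_5,\ubar\chi_5)}]:\shim^{(L_5,\ubar\chi_5)}\to \stack A_{\integ[\zeta_5],\Sigma}$ as the covering map of Shimura stacks associated to the inclusion of compact open subgroups $\kk^{(L_5,\ubar\chi_5)}=\kk_{\integ[\zeta_5],L_5^\perp}\subset\kk_0$, with deck group $\kk_0/\kk^{(L_5,\ubar\chi_5)}\iso\orth(\disc(L_5^\perp))\iso\orth_3(\ff_5)\iso\{\pm1\}\times S_5$ (here one uses that $L_5^\perp$, like $L_3^\perp$, has discriminant an $\ff_5$-vector space of the predicted rank, so $\orth(\disc(L_5^\perp))\iso\orth_3(\ff_5)$). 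This is a morphism of stacks over $\integ[\zeta_5,1/10]$ by Lemma \ref{lemshimmorphisms}(b), since both groups are hyperspecial away from $2$ and $5$.

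Then I would check that the whole diagram commutes. The only new commutativity to verify is the outer square relating $\tau_{L_5,\ubar\chi_5}$ and $\kappa_5$: going down-then-across, $\mk^\circ_{L_5,\ubar\chi_5}\to\shim^{(L_5,\ubar\chi_5)}\to\stack A_{\integ[\zeta_5],\Sigma}$ should agree with going across-then-down, $\mk^\circ_{L_5,\ubar\chi_5}\to\stack H(5,1,5)\to\stack A_{\integ[\zeta_5],\Sigma}$. Over $\cx$ this is a Hodge-theoretic identity: the period map $\tau_{L_5,\ubar\chi_5,\cx}$ records the $\chi^\omega$-eigenspace of $H^2$ of the K3 surface $X$, and by Kond\=o's computation \cite[\S4.3,\S5.2]{kondo5points} this eigenspace is Hodge-isometric (after a Tate twist) to the corresponding eigenspace of $H^1$ of the cyclic cover $C$, i.e.\ to the period of $\pic^0_{C/S}$ with its $\integ[\zeta_5]$-structure; this is precisely the content of \cite[Case (5)]{shimura64} together with the eigenspace bookkeeping. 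Since $\shim^{(L_5,\ubar\chi_5)}$ is separated and $\mk^\circ_{L_5,\ubar\chi_5}$ is reduced with dense characteristic-zero fibers (Proposition \ref{propmlsmooth}), an equality of the two composites over $\cx$ forces equality over $\integ[\zeta_5,1/10]$. Finally, that $\kappa_5$ is a composition of isomorphisms on coarse moduli spaces follows because each of the three maps along the relevant path (the rigidification $\tilstack H(5,1,5)\to\tilstack H(5,1,5)\fatslash\mmu_5\sriso\tilstack M_{0,5}$, and the bijectivity of $\kappa_5$ on geometric points from Lemma \ref{lem5points}) is a bijection on geometric points of normal stacks, hence an isomorphism on coarse spaces by Zariski's main theorem, as noted at the end of \S\ref{subsec:g4}; and $\tau_{L_5,\ubar\chi_5,\cx}$ is an open immersion by the complex Torelli statement in Lemma \ref{lem5points}.

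The main obstacle I expect is the compatibility check over $\cx$ in the previous paragraph, i.e.\ matching the K3-surface periods computed by $\tau_{L_5,\ubar\chi_5}$ with the abelian-variety periods computed via $\stack H(5,1,5)\to\stack A_{\integ[\zeta_5],\Sigma}$. This is not a formal diagram chase: it requires tracing Kond\=o's explicit identification of $L_5^\perp$ and its $\mmu_5$-action through the double-cover-of-a-del-Pezzo construction and comparing it, eigenspace by eigenspace, with Shimura's transcendental parametrization in \cite[Case (5)]{shimura64}. Once that single Hodge-theoretic identification is in hand, everything else is bookkeeping with the functorialities of \S\ref{SS:shimintegral} and the rigidification formalism.
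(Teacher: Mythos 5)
Your proposal follows the same route as the paper, which in fact compresses everything you spell out into two lines: the top and middle rows are assembled from the preceding discussion, the bottom arrow is the quotient map of Shimura stacks attached to $\kk^{(L_5,\ubar\chi_5)}\subset\kk_0$ (which extends to canonical integral models by the functoriality of \S\ref{SS:shimintegral}), and commutativity is forced integrally because both $\shim^{(L_5,\ubar\chi_5)}$ and $\stack A_{\integ[\zeta_5],\Sigma}$ receive maps from $\mk^\circ_{L_5,\ubar\chi_5}$ with dense image. Your identification of the complex-analytic compatibility (Kond\=o's eigenspace computation versus Shimura's Case (5)) as the one non-formal input is accurate, and your treatment of the coarse-moduli claims via Zariski's main theorem is exactly the paper's.

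One step of your write-up is wrong as stated, though the conclusion survives: your explanation of the arrow labelled $[\mmu_2]$. A generic object of $\stack H(5,1,5)$ is a curve $C$ with $\aut(C\to\proj^1)\iso\mmu_5$, a group of odd order; there is no ``hyperelliptic-type involution,'' and if an involution of $C$ acted trivially on $(\pic^0_C,\theta)$ it would contradict the Torelli theorem. So the map $\stack H(5,1,5)\to\stack A_{\integ[\zeta_5],\Sigma}$ does \emph{not} factor through a rigidification $\stack H(5,1,5)\fatslash\mmu_2$ of the source (there is no central $\mmu_2$ to rigidify along). The $\mmu_2$ in question lives on the target: $-1\in\aut(\pic^0_C,\theta,\integ[\zeta_5])$ is not induced by any automorphism of $C$, so automorphism groups grow by a (trivially acting) factor of $\st{\pm1}$ under the map, i.e.\ over its image the map is of the form $X\to[X/\mmu_2]$ with trivial $\mmu_2$-action. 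This still induces an isomorphism on coarse moduli spaces, so nothing downstream breaks, but the mechanism is the opposite of the one you describe.
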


\begin{proof}
Since the canonical models of both $\shim^{(L_5,\ubar\chi_5)}$ and $ \stack A_{\integ[\zeta_5],\Sigma}$ receive maps from $\stack R_{L_5,\ubar\chi_5}$ over $\integ[\zeta_5,1/10]$ with dense image, it suffices to observe that the quotient map $\shim^{(L_5,\ubar\chi_5)} \to \stack A_{\integ[\zeta_5],\Sigma}$ is defined on the canonical models.
\end{proof}

\subsection{Six points on a line}
\label{subsec:6points}

In \cite[\S 12]{dolgachevkondo}, Dolgachev and Kond\=o show that the configuration space of six labelled points on the (complex) projective line is an arithmetic quotient of $\ball^4$.

The pointwise construction of \emph{loc. cit.} works over an arbitrary
algebraically closed field $k$.  Let $(P_1, \cdots, P_6) \in \tilstack
M_{0,6}$ be an ordered $6$-tuple of distinct points.  Let $C$ be the
cyclic degree three cover of $\proj^1$ ramified exactly at the $P_i$,
and let $Z'$ be the cyclic triple cover of the ambient weighted
projective space $\proj(1,1,2)$   ramified along
$C$.  (Explicitly, let $f(X_0,X_1)$ be a homogeneous form of degree
$6$ vanishing at the $P_i$; then $Z'$ is given by the equation
$X_3^3+X_2^3 + f(X_0,X_1)=0$ in $\proj(1,1,2,2)$.)  Then $Z'$ comes with an action by $\mmu_3 \times \mmu_3$; we
single out the action of $\mmu_3$ on $Z'$ via the diagonal embedding
$\mmu_3 \inject \mmu_3\times \mmu_3 \inject \aut_k(Z')$. The variety
$Z'$ has three ordinary nodes; its minimal resolution, $Z$, is a K3
surface, and the $\mmu_3$ action lifts to $Z$.  One finds that, by
construction, $\pic_{Z/k}(k)$ comes equipped with a primitive
inclusion of the lattice $L'_6 := U \oplus E_6(-1) \oplus
A_2(-1)^{\oplus 3}$, with orthogonal complement $A_2(1) \oplus
A_2(-1)^{\oplus 3}$, and that $d(L'_6)=2$. As a $\integ[\zeta_3]$-module, $(L'_6)^\perp$ is
free of rank $4$, and comes equipped with a Hermitian form of
signature $(3,1)$.  Let $\rho$ be the corresponding
$\mmu_3$-representation, and let $\chi'_6$ be the character of $\rho
\oplus \rho_{\triv}^{\oplus 14}$.  As usual, we have $(\dagger)$ for
$\mk^\circ_{(L'_6,\ubar\chi'_6)}$, $\tilstack M_{0,6}$ and
$\shim^{(L'_6,\ubar\chi'_6)}$ over  $\integ[\zeta_3,1/6]$.

Alternatively, we could use the strategy of \S \ref{subsec:5points}, and
compute the periods of $C$ directly.  If $(C \to S \to \proj^1_S) \in
\stack H(3,1,6)$, then $C/S$ is a family of curves of genus $4$, and $\pic^0_{C/S}$ has an action by
$\integ[\zeta_3]$ of signature $(1,3)$.  (This is case (2) of
\cite{shimura64}.)  The moduli space $\stack A_{\integ[\zeta_3],(1,3)}$
of principally polarized abelian fourfolds with action by
$\integ[\zeta_3]$ of signature $(1,3)$ is isomorphic to
$\shim_{\kk_0}[G,\xx_G]$, where $G = G_{\integ[\zeta_3],(L_6')^\perp}$
and $\kk_0$ is the stabilizer of the lattice $(L'_6)^\perp$.  There is
a surjection $\shim^{(L'_6,\ubar\chi'_6)} \to \shim_{\kk_0}[G,\xx_G]$
with covering map $\kk_0/\kk^{(L'_6,\ubar\chi_6)}\iso
\orth(\disc((L'_6)^\perp)) \iso \mmu_2 \times S_6$ (it seems that, in the
third displayed equation of \cite[p.93]{dolgachevkondo}, the authors
may have neglected to account for the discriminant kernel) and we
obtain:

\begin{proposition}
There is a diagram of stacks over $\integ[\zeta_3,1/6]$:
\[
\xymatrixcolsep{5pc}
\xymatrix{
\mk^\circ_{L_6',\ubar\chi'_6} \ar@/^3ex/^{\kappa'_6}[rr] \ar[r] \ar[dd]^{\tau_{L_6',\ubar\chi'_6}} &
\tilstack H(3,1,6) \ar[r] \ar[d]^{[S_6]} &
\tilstack M_{0,6} \ar[d]^{[S_6]} \\
&\stack H(3,1,6)\ar[r] \ar[d]^{[\mmu_2]} & \stack M_{0,6} \\
\shim^{(L_6',\ubar\chi'_6)} \ar[r]^{[\kk_0/\kk^{(L_6',\ubar\chi'_6)}]} &
\stack A_{\integ[\zeta_3],(1,3)} &
}
\]
where an arrow is labelled $[\Gamma]$ if it is a quotient by the
finite group $\Gamma$; the given factorization of $\kappa_6'$ is, on
coarse moduli spaces, a composition of isomorphisms; and $\tau_{L_5,\ubar\chi_5,\cx}$ is
an open immersion.
\end{proposition}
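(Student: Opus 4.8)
The plan is to read the diagram off from three inputs, each already in place. The outer arc $\kappa_6'$ and the left-hand vertical $\tau_{L_6',\ubar\chi'_6}$ cost nothing new: it is already noted above that $(\dagger)$ holds for $(\mk^\circ_{L_6',\ubar\chi'_6},\tilstack M_{0,6},\shim^{(L_6',\ubar\chi'_6)})$ over $\integ[\zeta_3,1/6]$. Spelled out, the pointwise construction of \cite{dolgachevkondo} supplies, over every algebraically closed $k$ with $6\in k\units$, a section of $\mk^\circ_{L_6',\ubar\chi'_6}(k)\to\tilstack M_{0,6}(k)$, so $\kappa_6'$ is a bijection on geometric points and hence an isomorphism on coarse moduli spaces; and $\tau_{L_6',\ubar\chi'_6}$ descends from $\cx$ and spreads out to $\integ[\zeta_3,1/6]$ by Lemma \ref{lem:tauZ}(b), with open image on $\cx$-points. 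What remains is to populate the rest of the diagram and check commutativity.

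Next I would factor $\kappa_6'$ through the two cover stacks, exactly as in the proof of Lemma \ref{lem5points}. One sends $(Z\to S,\alpha,\rho)\in\mk^\circ_{L_6',\ubar\chi'_6}(S)$ to the cyclic triple cover $C\to\proj^1$ extracted from the $\mmu_3$-action and the labelled cycles $E_1,\dots,E_6$ of the lattice polarization; in positive characteristic one recovers $C$ using smoothness of $\mk^\circ_{L_6',\ubar\chi'_6}$ over $\integ[\zeta_3,1/6]$ and the specialization argument of Lemma \ref{lemfixed}(c), as in Lemma \ref{lemn4}. This gives $\mk^\circ_{L_6',\ubar\chi'_6}\to\tilstack H(3,1,6)$, and composing with the rigidification $\tilstack H(3,1,6)\to\tilstack H(3,1,6)\fatslash\mmu_3\sriso\tilstack M_{0,6}$ — an isomorphism on coarse moduli spaces — recovers $\kappa_6'$, proving the asserted factorization and its behaviour on coarse moduli. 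The two $[S_6]$-arrows are the standard maps forgetting the labelling of the branch (resp.\ marked) points, and since every construction here is $S_6$-equivariant, the upper rectangle commutes.

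Finally, for the bottom of the diagram: the Jacobian map carries $(C\to\proj^1_S)\in\stack H(3,1,6)(S)$ to $\pic^0_{C/S}$ with its $\integ[\zeta_3]$-action of signature $(1,3)$, landing in $\stack A_{\integ[\zeta_3],(1,3)}$, which by Shimura's case (2) \cite{shimura64} is the Shimura stack $\shim_{\kk_0}[G,\xx_G]$ with $G=G_{\integ[\zeta_3],(L_6')^\perp}$; the induced $\stack H(3,1,6)\to\stack A_{\integ[\zeta_3],(1,3)}$ is the quotient by the residual $\mmu_2$. The vertical $\shim^{(L_6',\ubar\chi'_6)}\to\stack A_{\integ[\zeta_3],(1,3)}$ is induced by the inclusion $\kk^{(L_6',\ubar\chi'_6)}\subset\kk_0$ of compact opens — both hyperspecial away from $6$ — via Lemma \ref{lemshimmorphisms}(b), so it is defined already on the integral canonical models over $\integ[\zeta_3,1/6]$, with covering group $\kk_0/\kk^{(L_6',\ubar\chi'_6)}\iso\orth(\disc((L_6')^\perp))\iso\mmu_2\times S_6$. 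Since $\mk^\circ_{L_6',\ubar\chi'_6}$ maps with dense image over $\integ[\zeta_3,1/6]$ to $\shim^{(L_6',\ubar\chi'_6)}$ and, through $\stack H(3,1,6)$, to $\stack A_{\integ[\zeta_3],(1,3)}$, and these composites coincide over $\cx$ by \cite{dolgachevkondo}, separatedness of $\stack A_{\integ[\zeta_3],(1,3)}$ forces the lower square to commute integrally. I expect the only genuine friction to be the bookkeeping of the discriminant-kernel factor $\mmu_2$: one must confirm that the $\mmu_2$ missing from the third displayed equation of \cite[p.~93]{dolgachevkondo} is exactly the automorphism of $\mmu_3$ absorbed into $\stack H(3,1,6)\to\stack A_{\integ[\zeta_3],(1,3)}$, so that $\mmu_2\times S_6$ distributes correctly across the two rectangles — over $\cx$ this is a Hodge/monodromy identification, and integrally it then follows from the complex statement plus separatedness.
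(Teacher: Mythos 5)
Your proposal is correct and follows essentially the same route as the paper, which proves this proposition implicitly through the surrounding discussion (the ``as usual, we have $(\dagger)$'' remark, the factorization of $\kappa_6'$ through $\tilstack H(3,1,6)$, and — exactly as in the five-points case — the observation that both $\shim^{(L_6',\ubar\chi_6')}$ and $\stack A_{\integ[\zeta_3],(1,3)}$ receive dense-image maps from $\mk^\circ_{L_6',\ubar\chi_6'}$ over $\integ[\zeta_3,1/6]$, so it suffices that the quotient map between them is defined on the integral canonical models). Your extra care about extracting the genus-$4$ curve $C$ in positive characteristic and about the $\mmu_2$ discriminant-kernel bookkeeping is more explicit than the paper, but consistent with it.
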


\subsection{Cubic surfaces}
\label{subsec:cubsurf}

Let $\mcs$ be the moduli space of cubic surfaces. If $V/\cx$ is a
complex cubic surface, then either by associating a cubic threefold
\cite{actsurfaces} or a K3 surface \cite{dvgk} to it and measuring its
periods, one obtains an open immersion $\mcs(\cx) \inject
\ball^4/\Gamma$.  The arithmetic nature of this map is explored in
\cite{achtercubics}.  Unfortunately, there is a stack-theoretic
mistake there. While $\mcs$ and  $\stack
H(3,3,3)$ have isomorphic coarse moduli spaces, they are not literally
the same stack; $\stack H(3,3,3) \to \mcs$ is the rigidification along
the $\mmu_3$-action.  We take the present opportunity to correct this
oversight, and recast the main result in the framework developed here.

Let $\mct$ be the moduli space of smooth projective cubic
\emph{threefolds}.  If $T \in \mct(\cx)$, then its intermediate
Jacobian is a principally polarized abelian fivefold.  This gives a
period map $\mct(\cx) \to \stack A_5(\cx)$, which is known to be an
embedding.  By using either monodromy considerations
\cite[Thm.~2.12]{deligneniveau} or the arithmetic nature of intermediate
Jacobians \cite[Thm. 6.1]{acmvdmij}, one can show that this period map
descends to a morphism $\mct \to \stack A_5$ of stacks over $\rat$.
Using the algebro-geometric construction of the intermediate Jacobian
as a Prym, one can actually spread out this out to achieve a morphism
$\mct \to \stack A_5$ of stacks over $\integ[1/2]$
\cite[Cor.~3.5]{achtercubics}.

Now, points of $H(3,3,3)$ correspond to cyclic triple covers of
$\proj^3$ branched along a cubic surface; as such, they are smooth
projective threefolds in their own right.  The $\mmu_3$ action on the
threefold induces an action of $\integ[\zeta_3]$ on the corresponding
intermediate Jacobian, with signature $(1,4)$.  Ultimately, one
obtains
\begin{proposition}
There is a diagram of stacks over $\integ[\zeta_3,1/6]$
\[
\xymatrix{
\mct\ar[d]  &\stack H(3,3,3) \ar@{_{(}->}[l] \ar[r]^\kappa \ar@{_{(}->}[d]^\tau  & \mcs \\
\stack A_5 & \stack A_{\integ[\zeta_3],(1,4)} \ar@{_{(}->}[l]
}
\]
in which $\tau$ is an open immersion, and $\kappa$
induces an isomorphism of coarse moduli spaces.
\end{proposition}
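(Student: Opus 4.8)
The plan is to assemble the diagram from morphisms that are already at hand and then to establish the one genuinely new point, that $\tau$ is an open immersion. For the horizontal map $\kappa$: a point of $\stack H(3,3,3)$ over $S$ is a smooth relative uniform triple cover $Y\to P\to S$ of a Brauer--Severi threefold $P/S$ whose branch divisor is a relative cubic surface $V\subset P$, and sending this datum to $V$ defines $\kappa:\stack H(3,3,3)\to\mcs$ over $\integ[\zeta_3,1/6]$. As recorded in \S\ref{subsec:cubsurf}, $\kappa$ is the rigidification of $\stack H(3,3,3)$ along the $\mmu_3$ of deck transformations; in particular it is a bijection on geometric points --- over an algebraically closed field in which $6$ is invertible, a cubic surface $\{F=0\}$ is the branch locus of a triple cover unique up to isomorphism --- and, both stacks being normal, Zariski's main theorem shows that $\kappa$ induces an isomorphism of coarse moduli spaces.

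For the vertical map $\tau$ and the commutativity of the square: the total space of the cover is a cubic threefold, since if $V=\{F=0\}\subset\proj^3$ then $Y=\{x_4^3+F(x_0,\ldots,x_3)=0\}\subset\proj^4$, with $\mmu_3$ scaling $x_4$ and fixing $V$; this yields a morphism $\stack H(3,3,3)\to\mct$ over $\integ[1/2]$. Composing with the integral period map $\mct\to\stack A_5$ of \cite[Cor.~3.5]{achtercubics}, built from the Prym presentation of the intermediate Jacobian, produces over the universal $\stack H(3,3,3)$ a principally polarized abelian scheme of relative dimension $5$ carrying the deck action of $\mmu_3$, hence --- as $\zeta_3$ is a unit on the base --- an action of $\integ[\zeta_3]$. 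The eigenvalue multiplicities of this action on the $5$-dimensional tangent space $H^{1,2}$ of a cyclic cubic threefold, equivalently on the relevant graded piece of $H^3_\derham$, are $(1,4)$; this is the computation underlying \cite{actsurfaces} and \cite{dvgk}. We thus obtain $\tau:\stack H(3,3,3)\to\stack A_{\integ[\zeta_3],(1,4)}$ over $\integ[\zeta_3,1/6]$, and by construction both composites $\stack H(3,3,3)\to\stack A_5$ --- through $\mct$ and through $\stack A_{\integ[\zeta_3],(1,4)}$ --- send a cover to the intermediate Jacobian of its total space, so the left square commutes.

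It remains to see that $\tau$ is an open immersion. Over $\cx$ this is exactly the occult period map of \cite{actsurfaces} and \cite{dvgk}: the map $\mcs(\cx)\inject\ball^4/\Gamma$ is an open immersion and its target is the set of complex points of $\stack A_{\integ[\zeta_3],(1,4)}$ under the Shimura-theoretic description of the latter, so $\tau_\cx$, and hence $\tau_\rat$ after the descent already carried out in \cite{achtercubics}, is an open immersion. To spread this over $\integ[\zeta_3,1/6]$ I would verify that $\tau$ is unramified and universally injective; unramifiedness is infinitesimal Torelli for cubic threefolds, valid away from characteristics $2$ and $3$, and universal injectivity is global Torelli together with the fact that the $\integ[\zeta_3]$-action on the intermediate Jacobian recovers the fixed cubic surface inside the cubic threefold. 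A morphism that is unramified between stacks smooth over the base of the same relative dimension $4$ is étale, and an étale universally injective morphism is an open immersion.

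The step I expect to be the real obstacle is the positive-characteristic Torelli input just invoked: infinitesimal Torelli for cubic threefolds is standard in the relevant characteristics, but a citable \emph{global} Torelli statement over fields of positive characteristic is less so. The cleanest route is probably to read injectivity off the Prym description already used in \cite{achtercubics}, reconstructing a cubic threefold from its principally polarized intermediate Jacobian; should that prove delicate, one still obtains --- with no positive-characteristic Torelli at all --- that $\tau$ induces an open immersion on $\cx$-points, which already supplies property $(\dagger)$ in the sense of Definition \ref{defarithoccult} and recovers the arithmetic result of \cite{achtercubics} over $\rat(\zeta_3)$.
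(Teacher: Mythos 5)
Your assembly of the diagram coincides with what the paper does: $\kappa$ is the rigidification of $\stack H(3,3,3)$ along the deck $\mmu_3$, inducing an isomorphism of coarse spaces by the bijectivity-on-geometric-points plus Zariski's-main-theorem observation made just before \S\ref{subsec:g4}; $\tau$ is obtained by composing ``take the total space of the cover'' with the Prym-theoretic integral period map $\mct \to \stack A_5$ of \cite[Cor.~3.5]{achtercubics} and observing that the $\mmu_3$-action forces the image into $\stack A_{\integ[\zeta_3],(1,4)}$; and the left square commutes by construction. Where you diverge is on the one substantive claim, that $\tau$ is an open immersion \emph{integrally}: the paper does not prove this here at all --- it is precisely the main result of \cite{achtercubics}, which this subsection only recasts after correcting the $\stack H(3,3,3)$-versus-$\mcs$ confusion --- whereas you sketch a direct argument (unramified between smooth stacks of equal relative dimension $4$, hence \'etale; plus universal injectivity, hence an open immersion). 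That sketch is structurally sound, but the injectivity input you would need is exactly the part you flag: a global Torelli statement for cyclic cubic threefolds, or for the Prym reconstruction, valid in positive characteristic. Two further points to watch if you pursue your route: for stacks, ``universally injective'' must be upgraded to ``monomorphism,'' i.e.\ you must also match automorphism groups (this is the same orbifold subtlety the paper attributes to \cite{zhengoccult} in Remark \ref{rem:zheng}); and the identification of the target of the classical ball-quotient maps of \cite{actsurfaces,dvgk} with $\stack A_{\integ[\zeta_3],(1,4)}(\cx)$ should be routed through the canonical model as in \S\ref{subsecunitaryshim}. Your fallback --- that with no positive-characteristic Torelli one still gets the map over $\integ[\zeta_3,1/6]$ together with an open immersion on $\cx$-points, i.e.\ property $(\dagger)$ --- is correct and is all that the paper's other examples assert; but the proposition as stated claims more, and for that the paper leans on \cite{achtercubics} rather than on an argument given in this text.
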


\bibliographystyle{hamsplain}
\bibliography{k3}

\end{document}